\pgfplotsset{compat=1.11}
\newcommand{\emb}[1]{\textcolor{black}{#1}}
\numberwithin{equation}{section}
\theoremstyle{definition}
\newtheorem*{definition}{Definition}
\newtheorem{remark}{Remark}
\newtheorem{claim}{Claim}
\newtheorem{propsec}{Proposition}[section]
\newtheorem*{main theorem}{Main Theorem}
\newtheorem{proposition}{Proposition}
\newtheorem{lemma}{Lemma}
\newtheorem{theorem}{Theorem}
\newtheorem{corollary}{Corollary}
\begin{document}

\title{On the Spectrum of Sturmian Hamiltonians\\ of Bounded Type in a Small Coupling Regime}
\author{\bf Alexandro Luna}
\date{}
\maketitle

\begin{abstract}
We prove that the Hausdorff dimension of the spectrum of a discrete Schr\"odinger operator with Sturmian potential of bounded type tends to one as coupling tends to zero. The proof is based on the trace map formalism.
\end{abstract}

\tableofcontents

\section{Introduction}
The object of study is the class of bounded self-adjoint discrete Schr\"odinger operators given by $H_{\lambda, \alpha, \omega}: \ell^2(\mathbb{Z})\rightarrow \ell^2(\mathbb{Z})$ via
$$\left[H_{\lambda,\alpha,\omega}u\right](n)= u(n+1)+u(n-1)+\left[\lambda\chi_{[1-\alpha, 1)}\circ R_{\alpha}^n(\omega)\right]u(n),$$
where $\lambda>0$, $\alpha$ is irrational, $\omega\in S^1$, \emb{where $S^1:=\mathbb R/\mathbb Z$ is the circle,} and $R_{\alpha}(\omega)=\omega+\alpha \ (\mathrm{mod} \ 1)$. We say that the potential term $v(n)=\lambda\chi_{[1-\alpha, 1)}\circ R_{\alpha}^n(\omega)$ is Sturmian, as it is generated by Sturmian subshifts (see \cite{D}), and refer to $\lambda$ as the coupling constant and $\alpha$ as the frequency. 

This operator falls under the category of a discrete Schr\"odinger operator with dynamically defined potential. This so-called Sturmian potential has been studied since the early 1980s, and has been a popular model in the study of one-dimensional quasicrystals. A Sturmian sequence is an aperiodic sequence of least possible complexity, and hence relates to the structure of quasicrystals which are highly ordered and aperiodic. A survey of results concerning these types of operators, especially Schr\"odinger operators, can be found in \emb{\cite{DEG,DF2}}.

We focus our attention to the case when the frequency has only finitely many different terms that appear in its continued fraction expansion (i.e. of bounded-type) and the coupling is weak (i.e. $\lambda$ is near $0$), and investigate the dimension of the spectrum of $H_{\lambda, \alpha, \omega}$ in this context. Since $R_{\alpha}$ is minimal, it is well-known (see Theorem 4.9 in \cite{DF}) that the spectrum of $H_{\lambda, \alpha, \omega}$ is independent of $\omega$, so we will denote it by $\sigma_{\lambda, \alpha}$. 

It is known from \cite{bist} that $\sigma_{\lambda, \alpha}$ is a Cantor set of zero Lebesgue measure. It was even recently proved in \cite{BBL} that all spectral gaps are open.
It is natural to ask about the fractal dimension of $\sigma_{\lambda, \alpha}$ and how it may depend on parameters $\lambda$ or $\alpha$. In application, the fractal dimension of $\sigma_{\lambda, \alpha}$ gives information about the solution to the time-dependent Sch\"odinger equation $i\partial_t\phi=H_{\lambda,\alpha,\omega}\phi$ (see \cite{JL, L}).

We prove that 
\begin{theorem}\label{main theorem}
    If $\alpha\in(0,1)$ is irrational of bounded-type, then 

    \begin{equation}\label{main limit}
        \lim\limits_{\lambda\rightarrow 0} \text{dim}_H\left(\sigma_{\lambda, \alpha}\right)=1.
    \end{equation}
    
\end{theorem}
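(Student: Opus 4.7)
The plan is to use the trace-map formalism to realize $\sigma_{\lambda,\alpha}$ as a dynamically defined Cantor set whose contraction ratios degenerate to $1$ as $\lambda\to 0$, and then apply a Bowen-type dimension formula. Writing $\alpha=[a_1,a_2,\dots]$ with convergent denominators $q_n$, set $x_n(E)=\tfrac12\mathrm{tr}\,M_{q_n}(E)$, where $M_{q_n}(E)$ is the transfer matrix over the Sturmian word of length $q_n$. The $x_n$ obey the Sturmian trace recursion, and the spectrum is a nested intersection $\sigma_{\lambda,\alpha}=\bigcap_n\mathcal B_n$, each $\mathcal B_n$ being a finite disjoint union of closed bands cut out by the conditions $|x_n|\le 1$ together with a compatibility condition on $x_{n-1}$. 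Because $\alpha$ is of bounded type, the number of sub-bands of any level-$n$ band appearing in $\mathcal B_{n+1}$ is bounded uniformly in $n$, and only finitely many combinatorial band types occur along the continued-fraction expansion.

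I would then organize this nested structure as a conformal graph-directed iterated function system. On each band $B\subset\mathcal B_n$ the map $E\mapsto x_n(E)$ is monotone and essentially Chebyshev-like, so its inverse branches over the sub-bands contained in $B\cap\mathcal B_{n+1}$ produce contractions $\phi$; the directed graph encoding the allowed transitions $B\to B'$ is finite by bounded type. Bowen's formula then identifies $\text{dim}_H(\sigma_{\lambda,\alpha})$ with the unique $s=s(\lambda)$ solving $P\bigl(-s\log|\phi'|\bigr)=0$ for the induced cookie-cutter system. To conclude $s(\lambda)\to 1$, it suffices to show that the derivatives $|\phi'|$ tend uniformly to $1$ on the spectrum, i.e. that the Lyapunov exponent of the trace map restricted to its non-wandering set vanishes as $\lambda\to 0$. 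This I would extract by perturbation around $\lambda=0$: at zero coupling the transfer matrices are elliptic rotations, the spectrum collapses to $[-2,2]$, and the $x_n$ reduce to Chebyshev polynomials whose inverse branches are isometries; for small $\lambda$, a quantitative analysis of the trace recursion along the Fricke--Vogt-type invariant surface should yield $|\phi'|\ge 1-c(\lambda)$ with $c(\lambda)\to 0$.

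The principal obstacle is exactly this degeneration: the trace map loses hyperbolicity as $\lambda\to 0$, so the standard cookie-cutter machinery, which requires uniform contraction, must be applied with estimates that survive in this limit. Concretely, one needs quantitative perturbation estimates for the trace recursion that are uniform across the finitely many $\alpha$-indexed branches (bounded type is essential here), together with a verification that the solution $s(\lambda)$ of the pressure equation tracks the expansion rate correctly as it collapses to identity. An alternative route, bypassing the pressure formalism, is a direct Moran-cover lower bound: sum $\sum_i |B_i|^s$ over the level-$n$ bands and exploit that the total length of $\mathcal B_n$ decays only slightly faster than its maximum band length when the branch contractions are close to $1$. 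Either way, the core analytic work is the same: controlling the trace map in the limit where its hyperbolic behavior degenerates, while keeping the combinatorics of the continued-fraction-indexed band structure under uniform control via the bounded-type hypothesis.
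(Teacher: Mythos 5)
Your outline rests on two foundations that are not available (or not correct) in the weak--coupling regime, so as it stands it would not go through. First, the hierarchical band combinatorics you invoke --- that $\sigma_{\lambda,\alpha}=\bigcap_n\mathcal B_n$ with each level-$(n{+}1)$ band nested in a level-$n$ band, with uniformly bounded branching governed by a finite transition graph --- is Raymond's structure, and it is established only for large coupling (essentially $\lambda>4$); this is exactly the machinery behind the known large-coupling dimension results (Liu--Wen, Liu--Qu--Wen). For small $\lambda$ the sets $\{|x_n|\le 1\}$ together with the compatibility condition on $x_{n-1}$ do not obviously produce a nested Markov family of bands, and no analogue of this combinatorics is known as $\lambda\to 0$, which is precisely the regime at issue. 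So the graph-directed IFS on which your Bowen-formula (or Moran-cover) argument is built has no foundation here; the existing small-coupling results, including the paper's, instead go through the trace-map dynamics on the surfaces $S_\lambda$.

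Second, the analytic criterion you propose is wrong in a way that cannot be repaired by better estimates. If a level-$n$ band contains at least two disjoint level-$(n{+}1)$ sub-bands, the corresponding inverse branches cannot all have derivative close to $1$ (their images would not fit inside the band); dimension tending to $1$ is driven not by $|\phi'|\to 1$ but by the gaps inside each band becoming negligible relative to the band --- equivalently, by the thickness of the Cantor set tending to infinity, which is what the paper actually proves via a bounded distortion estimate and a rectangle construction. Relatedly, the Lyapunov exponent of the trace map on its non-wandering set does \emph{not} vanish as $\lambda\to 0$: at $\lambda=0$ the restriction of $T_a$ to the central part $\mathbb S$ of $S_0$ is a factor of the hyperbolic toral automorphism induced by $A_a$, whose exponents are bounded away from zero, and for small $\lambda$ the relevant dynamics is a $C^2$-small perturbation of this; hyperbolicity degenerates only near the cusps $P_i$, which the paper sidesteps by excising neighborhoods of them and working with the survival set. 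Thus the obstacle you identify (loss of hyperbolicity as $\lambda\to 0$) is not the real difficulty; the real difficulty, untouched by your outline, is that for bounded-type but not eventually periodic $\alpha$ one must control the \emph{non-stationary} compositions $T_{a_n}\circ\cdots\circ T_{a_1}$, which the paper handles with non-stationary stable manifolds, dynamical rectangles anchored on them, a non-stationary bounded distortion estimate, and finally a thickness lower bound for the resulting Cantor set on the line $L_\lambda$.
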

This is a generalization of a result from \cite{M} where (\ref{main limit}) is proved under the assumption that $\alpha$ has an eventually periodic continued fraction expansion. An even earlier version of this result comes from \cite{dg3} where (\ref{main limit}) is proved for the case when $\alpha=\frac{\sqrt{5}-1}{2}$. The universal technique has been to estimate the \textit{thickness} of the Cantor set $\sigma_{\lambda, \alpha}$ using well-known theory from hyperbolic dynamics, but in our setting, we must introduce new methods to estimate the thickness.

In the case that $\alpha=\frac{\sqrt{5}-1}{2}$, the operator $H_{\lambda, \alpha, \omega}$ is known as the \textit{Fibonacci Hamiltonian}. It was first discovered that the spectrum of this operator is a Cantor set of zero Lebesgue measure in \cite{S89} and many questions regarding the box-counting and Hausdorff dimensions of the spectrum have been answered (see \cite{D00, D07, DEG, DGY, S95} for surveys). 

For general coupling constant, and when $\alpha=\frac{\sqrt{5}-1}{2}$, we know from a combination of results from \cite{C, Ca, DG1} that $\text{dim}_H\left(\sigma_{\lambda,\alpha}\right)$ is strictly between $0$ and $1$. If $\alpha$ has eventually periodic continued fraction expansion, then from \cite{P}, we know that $\text{dim}_H\left(\sigma_{\lambda,\alpha}\right)$ is analytic in $\lambda>0$, and from \cite{M}, we have that $\text{dim}_H\left(\sigma_{\lambda,\alpha}\right)=\text{dim}_B\left(\sigma_{\lambda,\alpha}\right)$ for all $\lambda>0$.

For large coupling constant and general frequency $\alpha$, when $\lambda>20$ it is known from \cite{LW} that $\text{dim}_H\left(\sigma_{\lambda,\alpha}\right)>0$ and there are specific necessary and sufficient conditions, regarding the continued fraction expansion of $\alpha$, for when $\text{dim}_H\left(\sigma_{\lambda,\alpha}\right)<1$. When $\lambda\geq 24$, we have a similar result from \cite{LQW}, which builds off of a series of results from \cite{FLW, LPW}, regarding the upper box counting dimension of $\sigma_{\lambda,\alpha}$.
From \cite{degt}, we know $\text{dim}_H\left(\sigma_{\lambda, \frac{\sqrt{5}-1}{2}}\right)\cdot \log\lambda$ converges to $\log(1+\sqrt{5})$ as $\lambda\rightarrow\infty$. A generalization of this result for Lebesgue a.e. $\alpha\in [0,1]\setminus \mathbb Q$ was given in \cite{CQ} where it was also shown that there is a set of full Lebesgue measure $\tilde {\mathbb I}\subset [0,1]\setminus \mathbb Q$ such that for each $(\alpha,\lambda)\in \tilde {\mathbb I}\times [24, \infty)$, the Hausdorff and box-counting dimensions of $\sigma_{\lambda,\alpha}$ coincide and are independent of $\alpha$.

For weak coupling constant, many dimension results about the spectrum have been obtained through the analysis of \textit{trace map formalism}. There is a fundamental link between the spectrum $\sigma_{\lambda, \alpha}$ and the dynamics of the so-called \textit{trace maps} over a family of cubic surfaces. More precisely, for each $a\in\mathbb{N}$, there is a corresponding trace map $T_{a}$ which leaves invariant a corresponding family of cubic surfaces $\{S_\lambda\}_{\lambda>0}$, and the dynamics of the trace maps over $S_{\lambda}$ in the prescribed order that comes exactly from the continued fraction expansion of $\alpha$, gives rich information about the spectrum $\sigma_{\lambda,\alpha}$ (see \cite{S87, Ra}). For example, the continued fraction expansion of $\alpha=\frac{\sqrt{5}-1}{2}$ contains only 1's, so here, only the Fibonacci trace map $T_1$ plays an important role. 
Many authors have examined dynamical properties of this trace map (see \cite{BGJ, BR, C, Ca, DG1, dg2, dg3, dg4, dg5, KKT}), where questions concerning hyperbolicity, Markov partitions, entropy, and more have been addressed. It turns out that \emb{the set of points with bounded orbit under the} Fibonacci trace map restricted to any $S_{\lambda}$, $\lambda>0$, is a locally maximal hyperbolic set that is homeomorphic to a Cantor set. This was proved for $\lambda\geq 16$ in \cite{C}, for small $\lambda$ in \cite{DG1}, and for all $\lambda$ in \cite{Ca}. When $\alpha$ is eventually periodic, one needs to investigate the non-wandering set of a hyperbolic map over $S_{\lambda}$, where this map is precisely a composition of finitely many trace maps, in which case several well-developed techniques from hyperbolic dynamics apply. 

In this paper, to work with the weakened assumption that $\alpha$ is of bounded-type, we incorporate non-stationary versions of various dynamical notions such as stable manifolds, Markov rectangles, and a bounded distortion estimate from \cite{BM}, \emb{since many of these classical results cannot be directly applied.}

\subsection{Background Material}
A main ingredient for achieving such results about the spectrum $\sigma_{\lambda, \alpha}$ is to investigate the dynamics of the so-called trace maps. \emb{Consider the maps $G, H: \mathbb R^3 \rightarrow\mathbb R^3$ via 
$$G(x_1,x_2,x_3):=(2x_1x_3-x_2 , x_1, x_3) \ \text{and} \ H(x_1,x_2,x_3):=(x_1,x_3,x_2).$$
For $a\in\mathbb N$, we define the $a^{\text{\textit{th}}}$\textit{-trace map} to be $T_a:\mathbb R^3\rightarrow \mathbb R^3$ via \begin{equation}\label{auxiliary map decomposition}
T_a=G^a\circ H.
\end{equation}}
For $\lambda>0$, we consider the family of cubic surfaces given by
$$S_\lambda:=\left \{ (x_1, x_2,x_3)\in\mathbb{R}^3 : x_1^2+x_2^2+x_3^2-2x_1x_2x_3=1+\frac{\lambda^2}{4} \right\}.$$
Each surface $S_{\lambda}$ is invariant under each trace map. Given a sequence $\overline a=(a_n)$, $a_n\in\mathbb{N}$, denote 
$$\Lambda_{\lambda}^{\text{bnd}}\left(\overline a \right):= \left\{\boldsymbol x\in S_{\lambda} : \left(T_{a_n}\circ\cdots \emb{\circ}T_{a_1}(\boldsymbol x)\right)_{n\in\mathbb{N}} \ \text{is bounded} \right\}.$$
Given irrational $\alpha\in(0, 1)$, we denote its continued fraction expansion by
$$
\alpha = \cfrac{1}{a_1 + \cfrac{1}{a_2 + \cfrac{1}{a_3 + \cdots}}} =: [a_1,a_2,a_3,\ldots]
$$
where each $a_k\in \mathbb{N}$ is uniquely determined. The main link between the trace maps and the spectrum $\sigma_{\lambda, \alpha}$ is the following:
\begin{theorem}\label{trace map bounded orbits connection}
For $\lambda>0$ and $\alpha=[a_1,a_2,a_3,\ldots]$ irrational, consider the sequence $\overline a=(a_n)_{n\in\mathbb{N}}$. 
Then, a real number $E$ belongs to $\sigma_{\lambda, \alpha}$ if and only if 
$$\left(\frac{E-\lambda}{2}, \frac{E}{2}, 1\right)\in \Lambda_{\lambda}^{\text{bnd}}\left( \overline a \right).$$
\end{theorem}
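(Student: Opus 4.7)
My approach follows the classical trace map formalism. For each $E\in\mathbb R$, introduce the transfer matrices
$$M_n(E,\omega)=\prod_{k=n-1}^{0}\begin{pmatrix} E-v_\omega(k) & -1 \\ 1 & 0 \end{pmatrix}\in SL(2,\mathbb R),$$
where $v_\omega(k)=\lambda\chi_{[1-\alpha,1)}(\omega+k\alpha)$. A well-known criterion, originating in the work of S\"ut\H{o} for the Fibonacci case and extended to general Sturmian potentials, states that $E\in\sigma_{\lambda,\alpha}$ if and only if the sequence of half-traces $x_n(E):=\tfrac12\mathrm{tr}\, M_{q_n}(E,0)$ at the continued-fraction denominators $q_n$ of $\alpha$ is bounded. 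My first step would be to invoke this criterion; it relies on $\omega$-independence of $\sigma_{\lambda,\alpha}$ together with the fact that in the periodic-approximant picture, $E$ falls outside the spectrum precisely when the associated periodic transfer matrices are hyperbolic.

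Next, I would exploit the $S$-adic structure of Sturmian words. Writing $s_n$ for the canonical Sturmian prefix of length $q_n$, one has the substitution identity $s_{n+1}=s_{n-1}\,s_n^{a_{n+1}}$, which translates into the matrix identity
$$M_{q_{n+1}}(E)=M_{q_n}(E)^{a_{n+1}}M_{q_{n-1}}(E).$$
From the $SL(2,\mathbb R)$ identity $\mathrm{tr}(AB)+\mathrm{tr}(AB^{-1})=\mathrm{tr}(A)\,\mathrm{tr}(B)$ together with the Chebyshev-style expansion of $\mathrm{tr}(A^a)$ in $\mathrm{tr}(A)$, a direct calculation shows that the triple $X_n:=(x_{n-1},x_n,x_{n+1})$ evolves under $X_{n+1}=T_{a_{n+1}}(X_n)$, with $T_a=G^a\circ H$ as in (\ref{auxiliary map decomposition}): the transposition $H$ implements the index shift, and the power $G^a$ encodes the Chebyshev recursion for $\mathrm{tr}(M_{q_n}^{a}M_{q_{n-1}})$. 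Moreover the Fricke--Vogt polynomial $x_1^2+x_2^2+x_3^2-2x_1x_2x_3-1$ is invariant under both $G$ and $H$, so every $T_a$ preserves each surface $S_\lambda$.

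The base case is a direct computation: for small $n$ one evaluates the traces on the length-$1$ blocks of the potential to obtain the initial triple $\bigl(\tfrac{E-\lambda}{2},\tfrac{E}{2},1\bigr)$, and checks that it satisfies the Fricke--Vogt relation with value $\lambda^2/4$, so it lies on $S_\lambda$. Putting the pieces together: the forward orbit $\{T_{a_n}\circ\cdots\circ T_{a_1}(\tfrac{E-\lambda}{2},\tfrac{E}{2},1)\}_{n\ge 1}$ is bounded if and only if the trace sequence $(x_n(E))$ is bounded, which by the S\"ut\H{o} criterion is equivalent to $E\in\sigma_{\lambda,\alpha}$.

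The main obstacle, in my view, is bookkeeping: aligning the ordering of the continued-fraction digits with the direction of composition of the trace maps, and verifying that the permutation $H$ and the power $G^a$ in $T_a=G^a\circ H$ exactly match the three-term recursion produced by $M_{q_{n+1}}=M_{q_n}^{a_{n+1}}M_{q_{n-1}}$. A secondary subtlety, which distinguishes the present bounded-type (non-periodic) setting from the Fibonacci or eventually-periodic case, is justifying the boundedness criterion without a periodic substitution rule; here one appeals to minimality of $R_\alpha$ and $\omega$-independence of the spectrum rather than to properties of a single substitution.
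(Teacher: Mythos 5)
Your outline is correct and is essentially the same route the paper relies on: the paper gives no independent proof of Theorem \ref{trace map bounded orbits connection}, but attributes it to the combination of \cite{bist} and \cite{D2000}, which is exactly the S\"ut\H{o}--Bellissard--Iochum--Scoppola--Testard boundedness criterion plus the Sturmian recursion $M_{q_{n+1}}=M_{q_n}^{a_{n+1}}M_{q_{n-1}}$, the $SL(2,\mathbb R)$ trace identities yielding $T_a=G^a\circ H$, the Fricke--Vogt invariant, and the initial condition $\left(\frac{E-\lambda}{2},\frac{E}{2},1\right)$ that you describe. The only caveat is that the heavy step (the spectral characterization by bounded trace orbits for general irrational $\alpha$) is invoked from the literature in your plan just as in the paper, so nothing new needs to be supplied beyond the bookkeeping you already flag.
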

This Theorem is due to a combination of results from \cite{bist} and  \cite{D2000}. If we consider the line
$$L_{\lambda}:=\left\{ \left(\frac{E-\lambda}{2}, \frac{E}{2}, 1\right) : E\in\mathbb{R}  \right\},$$
which is contained in $S_{\lambda}$, then our goal is to estimate the Hausdorff dimension of $L_{\lambda}\cap  \Lambda_{\lambda}^{\text{bnd}}\left( \overline a \right)$ as $\lambda\rightarrow 0$ (see Figure \ref{Cubi Surface Figure}).

\begin{center}
            
            \begin{figure}
                \centering
                \includegraphics[scale=.5]{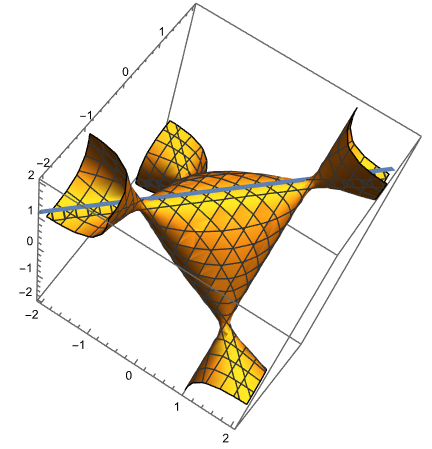}
                \caption{\emb{Surface $S_{\frac{1}{4}}$ and line $L_{\frac{1}{4}}$.}}\label{Cubi Surface Figure}
                
            \end{figure}
        \end{center}

\subsection{\emb{Approach and Outline of Paper}}\label{Outline of Paper}
\emb{Consider the points $P_1=(1,1,1), \ P_2=(-1,-1,1) , \ P_3= (1,-1,-1),$ and $P_4=(-1,1,-1)$. For $\rho>0$, let us set $O_\rho:=\bigcup_{i=1}^4 B_{\rho}\left(P_i\right),$ so that for small enough $\lambda$, we will have that $S_{\lambda}\setminus O_{\rho}$ is composed of five disjoint components, only one of which is bounded (see Figure \ref{Cubic surface with neighborhoods removed}). Denote this bounded component by $\mathbb S_{\lambda, O_{\rho}}$. For a fixed sequence $\overline a=(a_n)$, $a_n\in\mathbb N,$ and for small enough $\lambda$, we define 
$$\Lambda^{\text{surv}}_\lambda\left(\overline a, O_{\rho}\right):=\left\{\boldsymbol x\in \mathbb S_{\lambda, O_{\rho}}: T_{\lambda, a_n}\circ\cdots\circ T_{\lambda, a_1}(\boldsymbol x)\in \mathbb S_{\lambda, O_{\rho}}, \ \text{for all} \ n\in\mathbb N \right\}.$$
This set can be thought of as a non-stationary survival set. We will then prove
}
\emb{\begin{theorem}\label{first intermediate theorem}
Let $\alpha=[a_1,a_2,\dots]\in(0,1)$ be an irrational of bounded type and set $\overline a=(a_n)$. For each $\epsilon>0$, there is a $\rho>0$ and $\lambda_0>0$ such that 
$$\text{dim}_H\left(L_{\lambda}\cap \Lambda^{\text{surv}}_\lambda\left(\overline a, O_{\rho}\right) \right)\geq 1-\epsilon$$
for all $\lambda\in[0,\lambda_0]$.
\end{theorem}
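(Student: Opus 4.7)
The strategy is to reduce the dimension estimate to a linear model at $\lambda=0$ and then transfer to small $\lambda>0$ by a perturbation and bounded-distortion argument. On $S_0\setminus\{P_1,\ldots,P_4\}$ there is a well-known semi-conjugation $\pi\colon \mathbb T^2\to S_0$ given coordinate-wise by cosines of affine combinations of $(\phi_1,\phi_2)$; under this semi-conjugation, each trace map $T_a$ pulls back to a hyperbolic element $A_a\in \mathrm{SL}(2,\mathbb Z)$ acting linearly on $\mathbb T^2$. Since $\alpha$ is of bounded type, $(a_n)$ takes values in a finite set $\{1,\ldots,N\}$, so only finitely many matrices $A_1,\ldots,A_N$ arise. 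The line $L_0$ lifts to a smooth curve on $\mathbb T^2$, uniformly transverse to the stable direction of each $A_a$ away from a discrete set.

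First I would analyze the survival set in the linear model. Fix $\epsilon>0$ and build a common Markov partition $\mathcal R$ of $\mathbb T^2$ adapted to all of $A_1,\ldots,A_N$; this is possible because the family is finite. Pulling back $\mathcal R$ along the non-stationary sequence $A_{a_1},A_{a_2}A_{a_1},\ldots$ yields a non-stationary family of rectangles, and removing the $\rho$-neighborhoods of the singular points $P_i$ corresponds to discarding a controlled collection of rectangles. Because the dynamics is piecewise linear, a non-stationary Moran / Bowen-type estimate applied to the resulting symbolic coding gives, for $\rho$ sufficiently small, a lower bound of $1-\epsilon/2$ for the Hausdorff dimension of $\pi^{-1}(L_0)\cap \Lambda^{\text{surv}}_0(\overline a,O_\rho)$, uniformly in the sequence $\overline a\in\{1,\ldots,N\}^{\mathbb N}$.

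Next, for small $\lambda>0$, the surface $S_\lambda$ and the restrictions $T_a|_{S_\lambda}$ converge smoothly to $S_0\setminus\{P_i\}$ and $T_a|_{S_0}$ on the bounded component $\mathbb S_{\lambda,O_\rho}$, so hyperbolicity persists with uniform cone fields, and the linear Markov partition perturbs to a non-stationary family of Markov rectangles adapted to the sequence $T_{\lambda,a_n}$, while $L_\lambda$ remains uniformly transverse to the non-stationary stable foliation. Applying the non-stationary bounded distortion estimate from \cite{BM} to the compositions $T_{\lambda,a_n}\circ\cdots\circ T_{\lambda,a_1}$, one shows that the Hausdorff dimension of $L_\lambda\cap \Lambda^{\text{surv}}_\lambda(\overline a,O_\rho)$ differs from the linear model's dimension by at most $\epsilon/2$ once $\lambda\leq \lambda_0=\lambda_0(\rho,\epsilon)$, yielding the claimed bound $\geq 1-\epsilon$.

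The main obstacle is executing the non-stationary hyperbolic construction uniformly across the sequence $\overline a$: one must build Markov rectangles that respect the Markov property under every $T_a$, verify the hypotheses of \cite{BM} along arbitrary sequences in $\{1,\ldots,N\}^{\mathbb N}$, and control the non-stationary stable foliation near $\partial O_\rho$, where the dynamics is most sensitive to the singularities of $S_0$. In particular, the constants in the Moran / Bowen estimate must depend only on $\rho$ and on the hyperbolicity bounds of $A_1,\ldots,A_N$, not on the particular $\overline a$; the order of quantifiers — first $\epsilon$, then $\rho$, and only then $\lambda_0$ — is essential, and the quantitative closeness of the perturbed distortion constants to the linear ones drives the final estimate.
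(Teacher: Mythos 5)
Your overall framework (pass to the torus model, exploit finiteness of $\{A_a\}$ coming from bounded type, use cone conditions and the bounded distortion estimate of \cite{BM}) is in the right spirit, but two central steps of your outline do not hold up as stated. First, there is no ``common Markov partition $\mathcal R$ of $\mathbb T^2$ adapted to all of $A_1,\dots,A_N$'': a Markov partition must have boundaries lying in the stable and unstable directions of the map, and the matrices $A_a$ have different stable/unstable eigendirections, so a single partition cannot satisfy the Markov property for every $A_a$ simultaneously. You flag this yourself as ``the main obstacle'' but offer no construction; in the non-stationary setting the correct objects are rectangles adapted to the particular sequence $\overline a$, whose stable sides are pieces of non-stationary stable manifolds (limits of $A_{a_m}^{-1}\cdots A_{a_{m+n}}^{-1}K^s$, which are lines of slope $-1/[a_m,a_{m+1},\dots]$). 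This is exactly what the paper builds: it extends $\tilde T_{\lambda,a}$ to Anosov maps of $\mathbb T^2$, proves a non-stationary stable manifold theorem via a sequence of graph-transform contractions, and constructs rectangles around each $Q_i$ whose stable boundaries lie in $W^s_\lambda(Q_0,m)$ for the special point $Q_0=(1/4,1/4)$, whose non-stationary orbit is finite, $\lambda$-independent, and stays away from the $Q_i$. Moreover, your use of bounded type only through ``finitely many matrices'' misses the essential number-theoretic input: the three-distance theorem (Proposition~\ref{Circle rotation} via Lemma~\ref{estimate for irrational lines}) is what gives uniform upper and lower bounds ($C^*$, $C^{**}$) on the distance of these stable lines from the cusp projections $Q_i$, and without that control the rectangle construction and the ensuing gap/bridge estimates have no uniform constants.

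Second, the transfer step ``the Hausdorff dimension of $L_\lambda\cap\Lambda^{\text{surv}}_\lambda$ differs from the linear model's dimension by at most $\epsilon/2$ once $\lambda\le\lambda_0$'' is asserted, not proved. Continuity of Hausdorff dimension under perturbation is delicate even for stationary hyperbolic sets (where it relies on thermodynamic formalism), and no such tool is available off the shelf in your non-stationary setting; bounded distortion alone does not yield it. The paper avoids any comparison with the $\lambda=0$ dimension: it proves a single uniform lower bound valid for all $\lambda\in[0,\lambda_0]$ at once, by constructing inside $L_\lambda\cap\Lambda^{\text{surv}}_\lambda$ a Cantor set whose gaps are the preimages of crossings of the small rectangles $R'_k(\lambda,Q_i,N^*)$, and estimating its thickness: bounded distortion plus the estimates $|P-Q_i|\le C^*$ for gap images and $\ge C^{**}/2$ for bridge endpoints give thickness at least $M$, and then $\dim_H\ge \log 2/\log\bigl(2+\tfrac{1}{M}\bigr)\ge 1-\epsilon$. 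The smallness of $\rho$ enters through the choice of $N^*$ and $r_0$ (so that the removed neighborhoods sit deep inside the rectangles), not through a Moran/Bowen estimate at $\lambda=0$ followed by a perturbation of dimension. To repair your proposal you would need either to replace the common Markov partition by sequence-adapted rectangles with quantitative geometry (which is essentially the paper's construction), or to supply a genuinely different argument for the dimension stability claim; as written, both of these are gaps.
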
}

\emb{Since $\Lambda^{\text{surv}}_{\lambda}\left(\overline a, O_{\rho}\right)\subset \Lambda_{\lambda}^{\text{bnd}}\left(\overline a \right)$, together with Theorem \ref{trace map bounded orbits connection}, this will immediately imply Theorem \ref{main theorem}.}

\begin{center}
            
            \begin{figure}
                \centering
                \includegraphics[scale=.5]{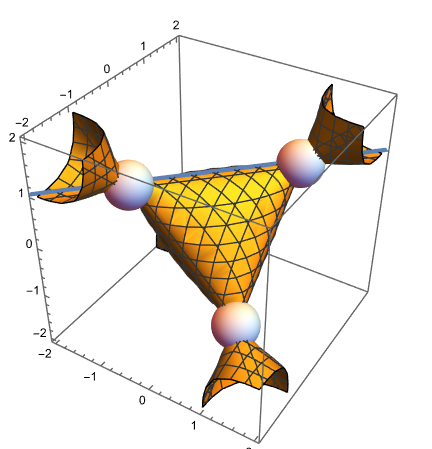}
                \caption{\emb{Depiction of $S_{\lambda}\setminus O_{\rho}$.}}\label{Cubic surface with neighborhoods removed}
                
            \end{figure}
        \end{center}
\emb{The motivation for studying the set $\Lambda^{\text{surv}}_{\lambda}\left(\overline a, O_{\rho}\right)$ comes from the fact that near each $P_i$, the curvature of the surface $S_{\lambda}$ becomes large} as $\lambda\rightarrow 0$. \emb{This makes analyzing the trace map dynamics near these cusps complicated.}

For any \emb{point} that has a bounded orbit, its orbit may enter a neighborhood of one of the cusps of $S_{\lambda}$ for some amount of time but then return to a central portion of the surface where the dynamics are comparable to those on $S_0$. This behavior results in the need for technical distortion estimates, and makes obtaining a common cone condition for our family of trace maps, over these cusps, rather difficult. These details have been written in Section 3 of \cite{dg3} for the map $T_1$ only.

\emb{So, to} avoid this obstacle, we will first remove neighborhoods of $S_\lambda$ near the cusps, \emb{and instead study the set of points whose orbits remain (or survive) in the left over bounded component}. The points from $L_{\lambda}$ that satisfy this behavior form a subset of $L_{\lambda}\cap  \Lambda_{\lambda}^{\text{bnd}}\left( \overline a \right)$, and it is sufficient to show that this subset has Hausdorff dimension near $1$ as $\lambda$ approaches $0$. Provided $\lambda$ is sufficiently small, it turns \emb{out} that each trace map restricted to a desired central portion of $S_{\lambda}$ is a factor of an Anosov map restricted to \emb{the two torus $\mathbb{T}^2:=\mathbb R^2/\mathbb Z^2$} with a small open set removed, and this map is a perturbation of a hyperbolic linear automorphism. \emb{Thus, as we will see later in the paper (see Section \ref{Main Estimates and Non-Stationary Trace Map Dynamics}), studying the set $\Lambda^{\text{surv}}_{\lambda}\left(\overline a, O_{\rho}\right)$  can be translated to a problem involving a sequence of toral Anosov maps.}

The structure of the paper is outlined as follows:
\begin{itemize}
    \item Section \ref{developing trace maps over torus} is dedicated to analyzing basic properties of the trace maps $T_{a}$ restricted to each surface $S_{\lambda}$ for a small range of $\lambda$, and we also develop some tools and notations that will be used later in the main proof. 
    \item \emb{In Section \ref{Main Estimates and Non-Stationary Trace Map Dynamics}, we state and prove Theorem \ref{intermediate theorem}, which is a translation of Theorem \ref{first intermediate theorem} to a problem concerning the non-stationary dynamics of a sequence of toral maps. The proof of this statement is done through the entirety of Section \ref{Main Estimates and Non-Stationary Trace Map Dynamics}, and hence, we give an outline of the argument immediately after the statement of Theorem \ref{intermediate theorem}.}
\item \emb{In Section \ref{Concluding the Proofs}, we use Theorem \ref{intermediate theorem} to conclude the proofs of Theorem \ref{first intermediate theorem} and Theorem \ref{main theorem}.}
\end{itemize}
As a choice of notation, we will always use "$\text{dist}$" to denote \emb{the minimal distance between sets and "$\text{dist}_H$" to denote} the Hausdorff distance between sets and $d_{C^0}$ to denote the $C^0$ distance between mappings. With a slight abuse of notation, we will denote $(x,y)$ to be the standard coordinates in both $\mathbb{T}^2$ and $\mathbb R^2$.

\section{Trace Map Dynamics as Anosov Diffeomorphisms of $\mathbb{T}^2$}\label{developing trace maps over torus}
We now relate the dynamics of our trace maps restricted to $S_{\lambda}$ to the dynamics of a family of mappings of $\mathbb{T}^2$ \emb{and derive several properties concerning these toral maps.} We will denote $\pi: \mathbb{R}^2\rightarrow \mathbb{T}^2$ to be the universal covering map given by $(x,y)\mapsto (x,y) \ (\mathrm{mod} \ 1)$.
\subsection{Torus Model for $\lambda=0$}\label{torus model for zero coupeling constant}
\emb{Notice that the} surface $S_0$ is smooth everywhere except at \emb{each $P_i$, $i=1,\dots,4$}. Let us denote $\mathbb{S}:=S_0\cap \{|x_1|\leq 1, |x_2|\leq 1, |x_3|\leq 1\}$. We know
\begin{lemma}\label{semiconjugacy lemma}
    $T_a\restriction_{\mathbb S}$ is a factor of the linear toral hyperbolic map $\hat T_{0,a}:\mathbb{T}^2\rightarrow \mathbb{T}^2$ which is induced by the matrix $A_a=\begin{pmatrix}
        a & 1 \\
        1 & 0
    \end{pmatrix}$, via the semiconjugacy $F(x, y)=(\cos(2\pi(x+y), \cos(2\pi x), \cos(2\pi y))$.
\end{lemma}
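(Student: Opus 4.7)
The plan is to verify directly that the map $F$ satisfies $T_a \circ F = F \circ \hat T_{0,a}$ and surjects $\mathbb{T}^2$ onto $\mathbb S$, using the factorization $T_a = G^a \circ H$ to reduce the intertwining identity to two simpler computations, $G\circ F$ and $H\circ F$, each of which is a short trigonometric calculation.

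First I would check that $F$ descends from $\mathbb R^2$ to $\mathbb T^2$ and lands in $\mathbb S$. Both facts are immediate from the observation that $F$ depends on its arguments only through $\cos(2\pi\,\cdot)$, so integer shifts are harmless and every coordinate of $F(x,y)$ lies in $[-1,1]$. Membership in $S_0$ reduces to the identity
$$\cos^2 u + \cos^2 v + \cos^2(u+v) - 2\cos u\cos v\cos(u+v)=1,$$
which I would verify using $\cos(u+v)-2\cos u\cos v = -\cos(u-v)$ together with $\cos(u+v)\cos(u-v)=\cos^2 u + \cos^2 v -1$. For surjectivity onto $\mathbb S$, given $(x_1,x_2,x_3)\in\mathbb S$ I would pick $x,y\in[0,\tfrac12]$ with $\cos(2\pi x)=x_2$ and $\cos(2\pi y)=x_3$, and then solve the surface equation as a quadratic in $x_1$ to get $x_1 = x_2x_3\pm \sqrt{(1-x_2^2)(1-x_3^2)} = \cos(2\pi(x\mp y))$; flipping $y\mapsto -y$ in $\mathbb T^1$ if necessary selects the correct sign, so $(x_1,x_2,x_3)=F(x,y)$ for some $(x,y)\in\mathbb T^2$.

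For the intertwining, I would compute $H\circ F$ and $G\circ F$ directly. The first is trivial: $H$ swaps the second and third coordinates and the first coordinate of $F$ is symmetric in $x,y$, so $H\circ F(x,y)=F(y,x)$, meaning $H$ pulls back to the toral automorphism induced by $C=\begin{pmatrix}0&1\\1&0\end{pmatrix}$. For $G$, the product-to-sum identity $2\cos A\cos B=\cos(A+B)+\cos(A-B)$ gives
$$2\cos(2\pi(x+y))\cos(2\pi y)-\cos(2\pi x)=\cos(2\pi(x+2y)),$$
so $G\circ F(x,y)=F(x+y,y)$, meaning $G$ pulls back to the toral automorphism induced by $B=\begin{pmatrix}1&1\\0&1\end{pmatrix}$. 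Iterating yields $G^a\circ F=F\circ \tilde B^{\,a}$ with $B^a=\begin{pmatrix}1&a\\0&1\end{pmatrix}$, hence
$$T_a\circ F = G^a\circ H\circ F = F\circ \widetilde{B^a C},\qquad B^a C = \begin{pmatrix}1&a\\0&1\end{pmatrix}\begin{pmatrix}0&1\\1&0\end{pmatrix}=\begin{pmatrix}a&1\\1&0\end{pmatrix}=A_a,$$
which is exactly $\hat T_{0,a}$.

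There is no real obstacle here — the lemma reduces to an explicit trigonometric calculation plus a parametrization argument. The only subtlety worth flagging is that $F$ is not injective (it is two-to-one generically and has branch points above the $P_i$), so the conclusion is that $T_a\restriction_{\mathbb S}$ is a \emph{factor} of $\hat T_{0,a}$ rather than conjugate to it, which is precisely what is asserted.
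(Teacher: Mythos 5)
Your proposal is correct: the intertwining identities $H\circ F(x,y)=F(y,x)$ and $G\circ F(x,y)=F(x+y,y)$ both check out, the matrix product $B^aC=A_a$ is right, and together with surjectivity of $F$ onto $\mathbb S$ this gives exactly the factor statement. This is the same "direct calculation" the paper has in mind — it simply outsources the computation to Lemma 3.15 of \cite{M} rather than writing it out.
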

\begin{proof}
    This is a direct calculation \emb{which can be found in Lemma 3.15 in \cite{M}.}
\end{proof}

For each $a\in\mathbb{N}$, $A_a$ has two perpendicular eigenspaces with corresponding real eigenvalues for which one is less than $1$ in absolute value, and the other is greater than $1$. We call these respective eigenspaces the \textit{stable} and \textit{unstable} eigenspace of $A_a$ and denote them by $E^s_a$ and $E^u_{a}=\left(E^s_{a}\right)^{\perp}$. The corresponding \textit{stable} and \textit{unstable} eigenvalues are given by 
$$\mu^s(a):= \frac{a-\sqrt{\emb{4+a^2}}}{2}\ \text{and} \ \mu^u(a):=\frac{a+\sqrt{\emb{4+a^2}}}{2}.$$

For each $a>1$, the unstable eigenspace $E^u_a$ lies between the lines $E_1^u$ and the line $\{y=0\}$ \emb{in the projective sense}.

Let $\beta\in(0\emb{,}0.1]$, and consider the vector spaces $V_1(\beta):=\text{span}\left \{ \begin{pmatrix}
    1 \\
    -\beta
\end{pmatrix} \right\}$ and $V_2(\beta):=\text{span}\left \{ \begin{pmatrix}
    1 \\
    1+2\beta
\end{pmatrix} \right\}$. We let $K^u_{\beta}$ be the collection of vectors that are between $V_1(\beta)$ and $V_2(\beta)$, which indeed  contains each $E^u_{a}$. Formally, 
$$K^u_{\beta}:=\left\{c_1 \begin{pmatrix}
    1 \\
    -\beta
\end{pmatrix}  +c_2 \begin{pmatrix}
    1 \\
    1+2\beta
\end{pmatrix}:  c_1c_2\geq0 \right\}$$
and we also set $V_0(\beta)$ to be the line mid-way between $V_1(\beta)$ and $V_2(\beta)$. \emb{That is, $$V_0(\beta):=\text{span}\left\{ \begin{pmatrix}
    1 \\
    -\beta
\end{pmatrix}  + \begin{pmatrix}
    1 \\
    1+2\beta
\end{pmatrix}\right\}.$$}
Then, we have the following:
\begin{lemma}\label{linear cone condition}
    For each $\beta\in (0, 0.1]$, for the cones given by $K^u_{\beta}$ and $K^s_{\beta}:=\left(K_{\beta}^u\right)^{\perp}$, there exists a $\overline{\mu}_{\beta}>1$ such that for each $a\in \mathbb{N}$, we have
    \begin{itemize}
        \item $A_a^{-1}\left( K^s_\beta \right)\subset \text{int}\emb{\left(K^s_\beta\right)}$ and $A_{a}\left(K^u_\beta\right)\subset \text{int}\emb{\left(K^u_\beta\right)}$
        \item $\overline{\mu}_{\beta}|v|\leq |A_a v|\leq \mu^u(a)|v|$ for all $v\in K_{\beta}^u$
        \item $\overline{\mu}_{\beta}|v|\leq \left |A_a^{-1} v  \right|\leq \left(\emb{\left|\mu^s(a)\right|}\right)^{-1}|v|$ for all $v\in K_{\beta}^s$
        
    \end{itemize}
\end{lemma}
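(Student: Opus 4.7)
The plan is to establish both the invariance and the eigenvalue bounds for $A_a$ acting on $K^u_\beta$, and then transfer the conclusions to $A_a^{-1}$ acting on $K^s_\beta$ via the identity $R A_a R^{-1} = -A_a^{-1}$, where $R$ denotes rotation by $\pi/2$. A direct $2\times 2$ matrix computation verifies this identity, and a second one verifies $R(K^u_\beta) = K^s_\beta$: $R$ sends the spanning vectors $(1,-\beta)^T$ and $(1,1+2\beta)^T$ of $K^u_\beta$ to their perpendiculars $(\beta,1)^T$ and $(-(1+2\beta),1)^T$, which span $K^s_\beta$ by definition.

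For the invariance $A_a(K^u_\beta) \subset \text{int}(K^u_\beta)$, I will parameterize a nonzero vector of $K^u_\beta$ as a nonzero scalar multiple of $(1,m)^T$ with $m \in [-\beta, 1+2\beta]$ (the cone lies strictly between $V_1(\beta)$ and $V_2(\beta)$ and thus misses the vertical direction). Then $A_a(1,m)^T = (a+m,1)^T$, and since $a+m \geq 1 - \beta > 0$ the image has slope $1/(a+m) > 0 > -\beta$, and is strictly less than $1+2\beta$ because in the worst case $a=1$, $m=-\beta$ one computes $(1+2\beta)(1-\beta) = 1 + \beta - 2\beta^2 > 1$ for $\beta \in (0,0.1]$. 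The upper bound $|A_a v| \leq \mu^u(a)|v|$ is then immediate from $\|A_a\| = \mu^u(a)$, which holds because $A_a$ is symmetric with eigenvalues $\mu^u(a)$ and $\mu^s(a) = -1/\mu^u(a)$.

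The substantive step is the uniform-in-$a$ lower bound. Decomposing $v = a_u e^u_a + a_s e^s_a$ in the orthonormal eigenbasis of $A_a$ gives
$$\frac{|A_a v|^2}{|v|^2} \;=\; \frac{\mu^u(a)^2\, a_u^2 + \mu^u(a)^{-2}\, a_s^2}{a_u^2 + a_s^2},$$
which exceeds a constant $>1$ as soon as $|a_s/a_u|$ stays uniformly below $\mu^u(a)$. The key geometric observation is that every $v \in K^u_\beta$ has slope in $[-\beta, 1+2\beta] \subset [-0.1, 1.2]$, while $e^s_a$ has slope $-\mu^u(a) \leq -\mu^u(1) = -(1+\sqrt{5})/2 < -1.2$; thus the angular gap from $K^u_\beta$ to $e^s_a$ is at least $\arctan(\mu^u(1)) - \arctan(\beta)$, independent of $a$, and exceeds $\pi/4$ for $\beta \leq 0.1$. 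This caps $|a_s/a_u|$ by some $r_\beta < 1$; substituting and observing that the right-hand side above is monotone increasing in $\mu^u(a)$ (since $r_\beta < 1 < \mu^u(a)^2$) allows me to take the infimum over $a$ at $a=1$, producing the desired $\bar\mu_\beta > 1$. The bounds on $K^s_\beta$ then follow from $|A_a^{-1} v| = |A_a R^{-1} v|$ with $R^{-1}v \in K^u_\beta$ and $|R^{-1}v| = |v|$, together with $(|\mu^s(a)|)^{-1} = \mu^u(a)$.

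The main obstacle I anticipate is the uniform-in-$a$ expansion constant in the third step: one must simultaneously verify that the geometric cap on $|a_s/a_u|$ is worst at $a=1$ (when $e^s_a$ is least steep and the angular gap to $K^u_\beta$ smallest) and that the resulting ratio is monotonically larger for larger $a$, so that both pieces combine to place $\inf_{a\in\mathbb{N}}$ of the expansion at $a=1$ and keep it strictly above $1$.
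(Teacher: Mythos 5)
Your invariance computation, the operator-norm upper bound $\left\|A_a\right\|=\mu^u(a)$, and the transfer of the stable-cone statements to the unstable ones via $RA_aR^{-1}=-A_a^{-1}$ are all correct (the paper checks the boundary vectors of $K^u_\beta$ in exactly the same way and dismisses the stable case as ``similar''). The genuine problem is in the substantive step, the uniform lower bound. Your ``key geometric observation'' is false: the angle between a vector of $K^u_\beta$ and the \emph{line} $E^s_a$ is not bounded below by $\arctan\left(\mu^u(1)\right)-\arctan(\beta)$ independently of $a$. That quantity only measures the gap from the lower boundary ray $(1,-\beta)^T$ to the ray of $E^s_a$ lying in the fourth quadrant; for vectors near the upper boundary $(1,1+2\beta)^T$ the nearest ray of $E^s_a$ is the opposite one, and that gap equals $\pi-\arctan\left(\mu^u(a)\right)-\arctan(1+2\beta)$, which decreases to $\tfrac{\pi}{2}-\arctan(1+2\beta)<\tfrac{\pi}{4}$ as $a\to\infty$. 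Concretely, for large $a$ the stable line is nearly vertical and the unstable line nearly horizontal, so $v=(1,1+2\beta)^T$ has $|a_s/a_u|\approx 1+2\beta>1$: the cap $r_\beta<1$ you rely on does not exist. A second, related flaw: even granting such a cap, ``take the infimum over $a$ at $a=1$ by monotonicity in $\mu^u(a)$'' is not legitimate as stated, because the orthonormal eigenbasis, hence the pair $(a_u,a_s)$, changes with $a$; monotonicity of $\left(\mu^2a_u^2+\mu^{-2}a_s^2\right)/\left(a_u^2+a_s^2\right)$ in $\mu$ at fixed $(a_u,a_s)$ does not compare $|A_av|$ with $|A_1v|$ for the same $v$.

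The approach can be repaired: expansion strictly greater than $1$ only needs $|a_s/a_u|<\mu^u(a)$ (your quotient is decreasing in $(a_s/a_u)^2$), and one can check $\sup_{v\in K^u_\beta}|a_s/a_u|\le 1+2\beta$ uniformly in $a$, handling $a=1$ (where the true cap is about $0.77$) separately from $a\ge 2$ (where $\mu^u(a)^2\ge\left(1+\sqrt{2}\right)^2$ overwhelms the worst ratio), which yields a uniform $\overline\mu_\beta>1$. The paper's own argument is more elementary and gives more: it observes that $|A_av|\ge|A_1v|$ for every $v\in K^u_\beta$ --- indeed $v^T\left(A_a^2-A_1^2\right)v=(a-1)x\left((a+1)x+2y\right)\ge 0$ whenever the slope $y/x\ge-\beta$ --- and then minimizes the explicit quotient $\left((1+m)^2+1\right)/\left(1+m^2\right)$ of $|A_1(1,m)^T|^2/|(1,m)^T|^2$ over $m\in[-\beta,1+2\beta]$, with the minimum at $m=-\beta$, giving $\overline\mu_\beta=\sqrt{\left((1-\beta)^2+1\right)/\left(1+\beta^2\right)}$. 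Note that this explicit value, with $\overline\mu_\beta\to\sqrt{2}$ as $\beta\to 0$, is used later (see (\ref{initial beta requirements}) and Lemma \ref{linear overflow}), so a proof producing only ``some constant greater than $1$'' meets the literal statement of the lemma but would not support the subsequent arguments.
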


\begin{proof}
    Let $a\in\mathbb{N}$. To see that $A_{a}\left(K^u_\beta\right)\subset \text{int}K^u_\beta$, it suffices to check that $A_a v\in \text{int}K^u_\beta$ for the vectors $v\in\partial K^u_{\beta}$. The lines that make up $\partial K^u_\beta$ are $V_1(\beta)=\text{span}\left \{ \begin{pmatrix}
    1 \\
    -\beta
\end{pmatrix} \right\}$ and $V_2(\beta)=\text{span}\left \{ \begin{pmatrix}
    1 \\
    1+2\beta
\end{pmatrix} \right\}$. A simple calculation gives that 
$$A_a \begin{pmatrix}
            1 \\
            -\beta
        \end{pmatrix}=\begin{pmatrix}
            a-\beta \\
            1
        \end{pmatrix}
        =\left(a-\beta\right)\begin{pmatrix}
            1 \\
           \frac{1}{a-\beta}
        \end{pmatrix}.$$
    
Clearly $\frac{1}{a-\beta}>-\beta$ and also, $\beta\in(0,0.1]$ implies 
$$(1+2\beta)\left(a-\beta\right)> 1 \Rightarrow 1+2\beta> \frac{1}{a-\beta}$$
and hence $\left(a-\beta\right)\begin{pmatrix}
            1 \\
           \frac{1}{a-\beta}
        \end{pmatrix}$ belongs to a line strictly between $V_1(\beta)=\text{span}\left \{ \begin{pmatrix}
    1 \\
    -\beta
\end{pmatrix} \right\}$ and $V_2(\beta)=\text{span}\left \{ \begin{pmatrix}
    1 \\
    1+2\beta
\end{pmatrix} \right\}$ \emb{in the projective sense}. A similar calculation shows that the same is true for $ A_a \begin{pmatrix}
            1 \\
            1+2\beta
        \end{pmatrix}$.
The weakest expansion of any vector from $K^u_\beta$ under any $A_a$ is given by iterating vectors furthest away from $E_1^u$ under the map $A_1$. In other words, for any $a$ and $v\in K^u_{\beta}$,
\emb{$$\left|A_a\frac{v}{|v|} \right|\geq \left|A_1\frac{v}{|v|} \right|\geq \left|A_1\frac{\left(1,-\beta\right)^T}{\left| \left(1,-\beta\right)^T \right|} \right|=\frac{\sqrt{\left(1-\beta\right)^2+1}}{\sqrt{\beta^2+1}}=:\overline{\mu}_{\beta}.$$}
The vectors from $K^u_\beta$ that are expanded the most under $A_a$ are the ones from $E^u_a$, and these vectors are expanded by a factor of precisely $\mu^u(a)$. It follows that
$$  \overline{\mu}_{\beta}|v|\leq \left |A_a v \right| \leq \mu^u(a)|v|.$$
The invariance of $K^s_{\beta}$ under $A_a^{-1}$ and expansion estimates are checked similarly.
\end{proof}
\emb{\begin{remark}
    The range of $\beta$ in this lemma can most certainly be increased, but for our arguments, we just need any interval containing $0$.
\end{remark}}

Let $\theta_0(\beta)$ denote the angle between boundary lines of $K^u_\beta$.
Notice that $\theta_0(\beta)\rightarrow \frac{\pi}{4}$, and $\overline{\mu}_\beta\rightarrow \sqrt{2}$, as $\beta\rightarrow 0$.
For the rest of the paper, we will fix a $\beta_0\in(0, 0.1)$, so that 
\begin{equation}\label{initial beta requirements}
    \theta_0(\beta_0)<\frac{\pi}{3} \ \text{and} \ \overline{\mu}_{\beta_0}\geq \sqrt{2}-0.01,
\end{equation}
and denote 
\begin{equation}\label{main cone and line}
    K^u:=K^u_{\beta_0}, \ K^s:=K^s_{\beta_0}, \ \text{and} \ V:=V_0(\beta_0).
\end{equation}

\begin{lemma}\label{slope of linear stable manifolds}
   Let $\alpha=[a_1, a_2,\dots ]\in(0,1)$ be irrational. The set
    $$E^s_{\alpha}:=\bigcap_{n=1}^{\infty}A_{a_1}^{-1}\cdots A_{a_n}^{-1}\left(K^s\right)$$
    is a line through $0$ of slope $-\frac{1}{\alpha}$.
\end{lemma}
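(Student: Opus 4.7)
The plan is to split the lemma into two independent tasks: (i) show that the nested intersection $\bigcap_n A_{a_1}^{-1}\cdots A_{a_n}^{-1}(K^s)$ collapses to a single line through the origin, and (ii) identify its slope as $-1/\alpha$ via a direct continued-fraction computation.

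For task (i), I will repeatedly invoke Lemma \ref{linear cone condition}. Since each $A_{a_k}^{-1}$ sends $K^s$ strictly into its interior, the sets $C_n := A_{a_1}^{-1}\cdots A_{a_n}^{-1}(K^s)$ form a nested decreasing family of closed cones. To ensure that $\bigcap_n C_n$ is a single line, it suffices to show that the angular width $\theta_n$ of $C_n$ shrinks to $0$. Given unit vectors $v,w\in K^s$ on the boundary lines of $K^s$, and writing $V=A_a^{-1}v$, $W=A_a^{-1}w$, the planar determinant identity $V\times W=\det(A_a^{-1})(v\times w)$ together with $|\det(A_a)|=1$ and $|V|,|W|\geq \overline{\mu}_{\beta_0}$ (from Lemma \ref{linear cone condition}) yields
\[
\sin\angle(V,W) \;=\; \frac{|V\times W|}{|V|\,|W|} \;\leq\; \frac{\sin\angle(v,w)}{\overline{\mu}_{\beta_0}^{\,2}}.
\]
Iterating this inequality with initial angular width $\theta_0(\beta_0)<\pi/3$ from \eqref{initial beta requirements} gives $\sin\theta_n \leq \sin\theta_0(\beta_0)/\overline{\mu}_{\beta_0}^{\,2n}\to 0$; since the $\theta_n$ stay below $\pi/3$, the angular widths themselves tend to zero. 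Therefore $E^s_\alpha$ is a single line through the origin.

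For task (ii), I will track the slope in projective coordinates. Parameterizing nonvertical directions by their slope $m=y/x$, a direct computation shows that $A_a^{-1}$ acts on slopes by
\[
\varphi_a(m) \;=\; \frac{1}{m}-a.
\]
The vertical direction (slope $\infty$) lies in the slope-range of $K^s$, so $s_n := \varphi_{a_1}\circ\varphi_{a_2}\circ\cdots\circ\varphi_{a_n}(\infty)$ belongs to the slope-range of $C_n$ for each $n$, and by task (i) the sequence $(s_n)$ converges to the slope of $E^s_\alpha$. A short induction beginning with $\varphi_{a_n}(\infty)=-a_n$ gives
\[
s_n \;=\; -\left(a_1+\cfrac{1}{a_2+\cfrac{1}{\ddots+\cfrac{1}{a_n}}}\right) \;=\; -\frac{1}{[a_1,a_2,\ldots,a_n]}.
\]
Since the finite continued fractions $[a_1,\ldots,a_n]$ converge to $\alpha$ as $n\to\infty$, we conclude that the slope of $E^s_\alpha$ equals $-1/\alpha$.

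The main obstacle I anticipate is obtaining a \emph{uniform-in-$a$} angular contraction, because the $A_a^{-1}$ have wildly different expansion rates along the stable direction and a naive projective-hyperbolic argument would have to track this dependence. The cross-product/determinant identity circumvents the issue cleanly by reducing the angular estimate to the common expansion constant $\overline{\mu}_{\beta_0}$ already provided by Lemma \ref{linear cone condition}; after this, the identification of the slope is a purely formal continued-fraction calculation.
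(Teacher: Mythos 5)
Your proposal is correct, and its overall architecture matches the paper's: both arguments first show that the nested cones collapse to a single line and then identify the slope by tracking the image of the vertical direction, which is exactly the continued-fraction computation of the paper's Claim \ref{slope estimate} (your M\"obius action $\varphi_a(m)=\tfrac1m-a$ on slopes is the same induction in different clothing). The only real difference is how the collapse is justified: the paper passes to the projective line $\mathbb R\mathbb P^1$, asserts a uniform contraction rate for the maps $\hat A_{a_n}^{-1}$ on $\hat K^s$ by comparison with $\hat A_1$, and invokes the non-stationary contraction principle (Proposition \ref{nonstationary contractions}); you instead derive an explicit angular bound $\sin\theta_n\leq \sin\theta_0(\beta_0)\,\overline\mu_{\beta_0}^{-2n}$ from the determinant identity together with the uniform expansion constant of Lemma \ref{linear cone condition}. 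Your route is more elementary and quantitative, and it sidesteps having to justify a uniform projective contraction rate; the paper's route buys a reusable abstract statement (the non-stationary fixed point proposition) that it needs again elsewhere, e.g.\ in the stable manifold construction. One small point of care in your write-up: since new matrices are composed on the inside ($C_{n+1}=A_{a_1}^{-1}\cdots A_{a_{n+1}}^{-1}(K^s)$), the "iteration" should either be run inside-out on the cones $A_{a_{n-k+1}}^{-1}\cdots A_{a_n}^{-1}(K^s)\subset K^s$, or, more simply, the identity should be applied once to the full composite $B_n=A_{a_1}^{-1}\cdots A_{a_n}^{-1}$ using $|\det B_n|=1$ and $|B_nv|\geq \overline\mu_{\beta_0}^{\,n}|v|$ for $v\in K^s$; either way the stated bound holds, so this is a matter of phrasing rather than a gap.
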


\begin{proof}
    For each $n\in\mathbb{N}$, we consider $\hat A_{a_n}$ to be the action on \emb{real projective space} $\mathbb R\mathbb P^1$ induced by $A_{a_n}$. Denote $\hat K^s\subset \mathbb R\mathbb P^1$ to be the set obtained from projecting $K^s$ onto $\mathbb R \mathbb P^1$, which is compact. Then, from Lemma \ref{linear cone condition} we have that each map $\hat A_{a_n}^{-1}$ restricted to $\hat K^s$ maps $\hat K^s$ into $\hat K^s$ and is a contraction with contraction rate no greater than the contraction rate of $\hat A_{1}$, which is $\mu^s(1)^2$. By Proposition \ref{nonstationary contractions}, there is a $L^*\in \hat K^s $ such that 
    $$\lim\limits_{n\rightarrow \infty} \hat A_{a_1}^{-1}\circ\cdots\circ \hat A_{a_n}^{-1}(L)=L^*,$$
    for any $L\in \hat K^s$. That is to say, $E^s_{\alpha}$ projects uniquely onto $L^*$ and is hence a line. In particular, we know that the projection of $\text{span}\left\{(0,1)^{T}\right\}$ onto $\mathbb R\mathbb P^1$, denoted by $\hat{(0,1)}$, lives in $\hat K^s$, and hence 
    $\lim\limits_{n\rightarrow\infty}\hat A_{a_1}^{-1}\circ \cdots \circ \hat A_{a_n}^{-1}\left( \hat{(0,1)}  \right)=L^*.$
    We however see that the line $\text{span}\left\{A_{a_1}^{-1}\circ\cdots\circ A_{a_n}^{-1}(1,0)^{T}\right\}$ projects onto $\hat A_{a_1}^{-1}\circ \cdots \circ \hat A_{a_n}^{-1}\left( \hat{(1,0)}  \right)$. 
    \begin{claim}\label{slope estimate}
       The line $\text{span}\left\{A_{a_1}^{-1}\circ\cdots\circ A_{a_n}^{-1}\begin{pmatrix}
0\\
1
\end{pmatrix}\right\}$ has slope $-\frac{1}{[a_1,a_2,\dots, a_n]}$.
    \end{claim}
    \begin{proof}[Proof of Claim \ref{slope estimate}.]
    First, notice that for any number $b$ and any $a\in\mathbb{N}$, one has
    $$A^{-1}_a\begin{pmatrix}
-b\\
1
\end{pmatrix}=\begin{pmatrix}
0 & 1\\
1 & -a
\end{pmatrix}\begin{pmatrix}
-b\\
1
\end{pmatrix}=(a+b)\begin{pmatrix}
-\frac{1}{a+b}\\
1
\end{pmatrix}.$$
An induction argument then gives that 
$$A_{a_1}^{-1}\cdots A_{a_n}^{-1}\begin{pmatrix}
-b\\
1
\end{pmatrix}=(\text{Constant})\begin{pmatrix}
-\alpha_n(b)\\
1
\end{pmatrix}$$
where 
\[
\alpha_n(b)=\frac{1\kern6em}{\displaystyle
  a_1 + \frac{1\kern5em}{\displaystyle
    a_2 +\stackunder{}{\ddots\stackunder{}{\displaystyle
      {}+ \frac{1}{\displaystyle
        a_{n-1} + b}}
}}}
.\]
It follows that 
$$\text{span}\left\{A_{a_1}^{-1}\circ\cdots\circ A_{a_n}^{-1}\begin{pmatrix}
0\\
1
\end{pmatrix}\right\}=\text{span}\left\{  \begin{pmatrix}
-\frac{1}{[a_1,\dots,a_n]}\\
1
\end{pmatrix} \right\},$$
and hence has slope $-\frac{1}{[a_1,\dots, a_n]}$.

\end{proof}
\emb{This shows that $E^s_{\alpha}$ has slope $-\frac{1}{\alpha}$, proving the lemma.}
\end{proof}
\emb{It is clear that} each line $\pi\left(E^s_{\alpha}\right)$ is dense in $\mathbb{T}^2$. If $Q$ is a rational point in $\mathbb{T}^2$, and $L\subset \pi\left(E^s_{\alpha}\right)$ is a line segment with midpoint $0$, the length of $L$ and continued fraction expansion of $\alpha$ determine the distance from $L$ and $Q$. \emb{This exact distance can be estimated using Proposition \ref{Circle rotation}. In particular, we have:}

\begin{lemma}\label{estimate for irrational lines}
Let $Q$ and $Q'$ be distinct rational points on $\mathbb{T}^2$, and suppose that $\mathcal A\subset\mathbb{N}$ is a finite set. 

\begin{itemize}
    \item[1.)] For each $C^*>0$, there is a $l^*=l^*\left(C^*,\mathcal A\right)>0$ such that if $L$ is a line with midpoint $Q$, irrational slope $[a_1,a_2,\dots]$, $a_i\in\mathcal A$, and $\text{length}(L)>l^*$, then $\text{dist}(L, Q')<C^*$

    \item[2.)] For each $l>0$, there is a constant $C^{**}=C^{**}\left(l,Q, Q',\mathcal A \right)>0$ such that if $L$ is a line with midpoint $Q$, irrational slope $[a_1,a_2,\dots]$, $a_i\in\mathcal A$, and $\text{length}(L)<l$, then $\text{dist}(L, Q')>C^{**}$
\end{itemize}

\end{lemma}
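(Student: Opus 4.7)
My plan is to reduce the problem to an equidistribution estimate for a circle rotation, which is provided by Proposition~\ref{Circle rotation}. Translating so that $Q=0$ and writing $\alpha=[a_1,a_2,\dots]$, I will parametrize the line as $t\mapsto(t,\alpha t)\pmod{\mathbb Z^2}$, where $t$ ranges over an interval of $x$-length proportional to $\text{length}(L)$ (with proportionality constant $1/\sqrt{1+\alpha^2}\in(1/\sqrt2,1)$). A point of this line comes within $C^*$ of $Q'=(q_1',q_2')$ on the torus precisely when, for some integer $n$ with $|q_1'+n|$ in the admissible $t$-range, the rotate $n\alpha\pmod 1$ approximates the fixed target $c:=q_2'-\alpha q_1'\pmod 1$ within order $C^*$. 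Since $\alpha$ is of bounded type with digits in $\mathcal A$, Proposition~\ref{Circle rotation} supplies an $N=N(C^*,\mathcal A)$ such that any arc of $N$ consecutive iterates of $R_\alpha$ enters the $C^*$-neighborhood of every point of $S^1$, and the uniform bounds $\alpha\in[1/(\max\mathcal A+1),\,1/\min\mathcal A]$ convert the required number of $n$'s into the desired $l^*=l^*(C^*,\mathcal A)$.

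\textbf{Part 2.} For the second statement I use compactness. The set $\{\alpha=[a_1,a_2,\dots]:a_i\in\mathcal A\}$ is compact in $(0,1)$ and bounded away from $0$, so the set of admissible slopes is compact. For each such irrational slope $s$ and each $k\in\mathbb Z^2$, the nonzero rational vector $Q'+k-Q$ cannot be parallel to the irrational direction $(1,s)$; hence the full line through $Q$ of slope $s$ misses the lattice $Q'+\mathbb Z^2$, and in particular the length-$l$ segment $L_s$ centered at $Q$ has strictly positive distance to $Q'$ on $\mathbb T^2$. The map $s\mapsto\text{dist}(L_s,Q')$ is continuous (only finitely many translates $Q'+k$ lie within distance $l$ of $Q$, and each contributes continuously), so a strictly positive continuous function on a compact set attains a positive minimum, which serves as $C^{**}=C^{**}(l,Q,Q',\mathcal A)$. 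Any shorter segment is a subset of $L_s$ and hence has distance to $Q'$ at least this minimum.

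The main bookkeeping lies in Part 1: one must ensure that the constants delivered by Proposition~\ref{Circle rotation} depend only on $\mathcal A$ and not on the particular $\alpha=[a_1,a_2,\dots]$, and one must track the admissible range of $n$ carefully so that the integer realizing the $C^*$-approximation actually corresponds to a $t$ inside $[-l_x/2,l_x/2]$ rather than on the line's unrestricted extension. Part 2 is essentially a soft topological argument once positivity of the distance at each irrational slope is observed.
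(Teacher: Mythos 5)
Your proposal is correct and follows essentially the same route as the paper: Part 1 reduces the question to hitting the vertical line through $Q'$ and invokes Proposition \ref{Circle rotation} (with the same shift-of-base-point bookkeeping to land the approximating multiple inside the segment's $t$-range), and Part 2 is the same compactness argument, phrased via the compact set of admissible slopes and positivity of the distance from an irrational-slope line through a rational point to the other rational lattice translates, where the paper instead takes the compact set of length-$l$ segments directly. No gaps.
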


\begin{proof}
Without loss of generality, let us assume $Q=0$.

Let $C^*>0$. By Proposition \ref{Circle rotation}, there is a $n=n\left(C^*,\mathcal A\right)$ so that for any $\alpha=[a_1,a_2,\dots]$ where $a_i\in\mathcal A$, we must have 
$$\min\limits_{1\leq k\leq n} |k\alpha-y|<C^*$$
for any $y\in S^1$.

Consider the vertical line through $Q'=(x_0,y_0)$ given by $\text{vert}(Q')=\left\{(x_0, y) : y\in S^1 \right\}$. Then, there is a $l^*$ dependent on $n$ such that any line $L$ with midpoint $0$ and irrational slope $\alpha=[a_1,a_2,\dots]$, where $a_i\in\mathcal A$, satisfying $\text{length}(L)>l^*$ must intersect $\text{vert}(Q')$ at least $2n$ many times. 

At least $n$-many of the intersection points take the form $(x_0,z_0+k\alpha)$, $1\leq k\leq n$, for some $z_0\in S^1$. It follows that 
\begin{align*}
    &\text{dist}(L, Q')\leq \min\limits_{1\leq k \leq n}|(x_0,y_0)-(x_0,z_0+k\alpha)|\\
    &=\min\limits_{1\leq k \leq n}|(y_0-z_0)-k\alpha|<C^*,
\end{align*}
so that (1) holds.

Let $l>0$ and consider the set of lines $\mathbb L(\mathcal A)$ in $\mathbb{R}^2$ that have midpoint $0$, length exactly $l$, and irrational slope of the form $[a_1, a_2,\dots]$, $a_i\in\mathcal A$. If we consider the natural projection of $p:\mathbb{R}^2\setminus \{0\}\rightarrow \mathbb R\mathbb P^1$, then $p(\mathbb L(\mathcal A)\setminus \{0\})$ is a Cantor set on the circle $\mathbb R\mathbb P^1$, and is hence closed. By continuity of $p$, we must have that $p^{-1}(p(\mathbb L(\mathcal A)\setminus\{0\}))\cup \{0\}$ is a closed set in $\mathbb R^2$, and since $\mathbb L(\mathcal A)=\left[p^{-1}(p(\mathbb L(\mathcal A)\setminus\{0\}))\cup \{0\}\right]\cap B_{l}(0)$, it must be a compact set. Since the lines in $\mathbb L(\mathcal A)$ have irrational slope, this set is disjoint from the finite set $\pi^{-1}\left(Q'\right)\cap B_{2l}(0)$. Hence, there is a $C^{**}_0>0$ such that 
    $$\text{dist}\left( \mathbb L(\mathcal A), \pi^{-1}\left(Q'\right)\cap B_{2l}(0)\right)\geq C^{**}_0, $$
    and the result follows from here.

\end{proof}

\begin{remark}\label{remark for estimate or irrational lines}
    We note that by a symmetric argument, Lemma \ref{estimate for irrational lines} holds if the lines $L$ are replaced with lines normal to $L$ of equal length with midpoint $0$. In other words, the statement holds if we instead require the lines $L$ to have slope $-\frac{1}{[a_1,a_2,\dots]}$ such that $a_i\in \mathcal A$.
\end{remark}
\subsection{Torus Model for Small $\lambda>0$}\label{torus model for small lambda}
Let us set $Q_i:=F^{-1}(\{P_i\})$, for $i=1,\dots, 4$, so that $Q_1=(0,0)$, $Q_2=\left(\frac{1}{2},0\right)$, $Q_3=\left(\frac{1}{2},\frac{1}{2}\right)$, and $Q_4=\left(0,\frac{1}{2}\right)$. The next Lemma relates the dynamics of each $T_a$ on $S_{\lambda}$ to the dynamics of a toral map.
\begin{lemma}\label{connection}
    For each $\rho>0$ and $a\in\mathbb{N}$, there is a $\tilde \lambda(\rho, a)>0$ and $r(\rho)>0$, such that if $\lambda\in\left[0,\tilde\lambda(\rho, a)\right]$, then

    \begin{itemize}
    \item[(1)] For $O_\rho:=\bigcup_{i=1}^4 B_\rho(P_i)$, the set  $S_{\lambda}\setminus O_{\rho}$ consists of five disjoint connected components, only one of which is bounded, which we will denote by $\mathbb{S}_{\lambda,O_{\rho}}$. Furthermore, there is a smooth injective map $\pi_\lambda: \mathbb{S}_{\lambda,O_{\rho}}\rightarrow \mathbb S$, that only depends on $\lambda$, that approaches the identity map in the $C^1$-sense as $\lambda\rightarrow 0$

        \item[(2)] There is an open set $\tilde O_{r(\rho)}\subset \mathbb T^2$ which consists of four disjoint neighborhoods, each centered at a distinct $Q_i$ and of diameter less than $\frac{r(\rho)}{2}$, and a map $$\tilde T_{\lambda, a}:\mathbb{T}^2\setminus \tilde O_{r(\rho)}\rightarrow \mathbb{T}^2$$ satisfying
        $$F_{\lambda}\circ \tilde T_{\lambda, a}=T_{a}\restriction_{\mathbb S_{\lambda, O_\rho}}\circ F_{\lambda},$$
        where $F_{\lambda}:=\pi^{-1}_\lambda\circ F$ \emb{and $F$ is defined in Lemma \ref{semiconjugacy lemma}}.

    \end{itemize}
    Moreover, we have $\left\|\tilde T_{\lambda, a}-\hat T_{0,a}\right\|_{C^2}\rightarrow 0$, on $\mathbb T^2\setminus \tilde O_{r(\rho)}$, as $\lambda\rightarrow 0$, and $r(\rho)\rightarrow 0$ as $\rho\rightarrow 0$.
\end{lemma}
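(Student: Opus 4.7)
The plan is to prove part (1) by constructing $\pi_\lambda$ via the normal flow of the defining polynomial, and part (2) by lifting the dynamics of $T_a$ through the branched cover $F: \mathbb{T}^2 \to \mathbb{S}$ provided by Lemma \ref{semiconjugacy lemma}.

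For part (1), set $\Phi(x_1, x_2, x_3) := x_1^2 + x_2^2 + x_3^2 - 2x_1 x_2 x_3$ so that $S_\lambda = \Phi^{-1}(1 + \lambda^2/4)$. A direct computation yields $\nabla \Phi = 0$ exactly at the four points $P_i$, so on $\mathbb{R}^3 \setminus O_\rho$ the gradient is bounded below. The normalized gradient flow of $\Phi$ therefore furnishes, for $\lambda \in [0, \tilde\lambda(\rho))$, a $C^\infty$-smooth injective map $\pi_\lambda$ from $S_\lambda \cap (\mathbb{R}^3 \setminus O_\rho)$ into $S_0$ that converges to the inclusion in $C^k$ for every $k$ as $\lambda \to 0$, by the usual smooth-dependence-on-parameters argument. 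Since $S_0 \setminus O_\rho$ visibly has five connected components (a central bounded region plus four unbounded arms emerging from the cusps), smooth dependence forces $S_\lambda \setminus O_\rho$ to exhibit the same count for small $\lambda$; we designate the bounded component as $\mathbb{S}_{\lambda, O_\rho}$, and for small enough $\lambda$ its $\pi_\lambda$-image lies in $\mathbb{S}$.

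For part (2), recall that $F(x,y) = (\cos 2\pi(x+y), \cos 2\pi x, \cos 2\pi y)$ realizes $\mathbb{S}$ as the $\mathbb{Z}_2$-quotient of $\mathbb{T}^2$ under the involution $(x,y) \mapsto (-x,-y)$, whose four fixed points $Q_1, \dots, Q_4$ are precisely the critical points of $F$ and satisfy $F(Q_i) = P_i$. Let $\tilde O_{r(\rho)}$ be the union of the connected components of $F^{-1}(O_\rho \cap \mathbb{S})$ containing the $Q_i$'s and take $r(\rho)$ to be a diameter bound for these; by continuity of $F$ and its local quadratic structure at each $Q_i$, $r(\rho) \to 0$ as $\rho \to 0$. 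Then $F_\lambda := \pi_\lambda^{-1} \circ F$ is a $C^\infty$ $2$-to-$1$ map from $\mathbb{T}^2 \setminus \tilde O_{r(\rho)}$ onto $\mathbb{S}_{\lambda, O_\rho}$. For $x \in \mathbb{T}^2 \setminus \tilde O_{r(\rho)}$, set $y := T_a(F_\lambda(x))$; because $A_a$ commutes with $-\mathrm{id}$, the map $\hat T_{0,a}$ permutes the $Q_j$'s, so in the $\lambda = 0$ limit $\hat T_{0,a}(x)$ sits at a positive distance (depending on $\rho$ and $a$) from every $Q_j$. Shrinking $\tilde\lambda(\rho, a)$ ensures that $y$ lies in the image of $F_\lambda$, and we declare $\tilde T_{\lambda, a}(x)$ to be the $F_\lambda$-preimage of $y$ lying closer to $\hat T_{0,a}(x)$ than to its $\mathbb{Z}_2$-conjugate; this single-valued selection makes $F_\lambda \circ \tilde T_{\lambda, a} = T_a|_{\mathbb{S}_{\lambda, O_\rho}} \circ F_\lambda$ hold by construction.

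Finally, for the $C^2$-convergence, the uniform $C^2$-convergence $\pi_\lambda \to \mathrm{id}$ on compact subsets bounded away from the cusps translates directly into $F_\lambda \to F$ in $C^2$ on $\mathbb{T}^2 \setminus \tilde O_{r(\rho)}$; since $F$ is a $C^\infty$ local diffeomorphism there, the local inverse branches also converge in $C^2$, and composing yields $\tilde T_{\lambda, a} \to F^{-1} \circ T_a \circ F = \hat T_{0,a}$ in $C^2$. The principal obstacle is ensuring that $T_a(\mathbb{S}_{\lambda, O_\rho})$ stays inside the image of $F_\lambda$ — equivalently, that the preimage $\hat T_{0,a}^{-1}(\tilde O_{r(\rho)})$ is controlled; this requires quantifying how the expansion factor $\mu^u(a)$ of $\hat T_{0,a}$ distorts neighborhoods of the permuted $Q_j$'s, and it is exactly what forces $\tilde\lambda$ to depend on $a$ as well as $\rho$, shrinking as $a$ grows.
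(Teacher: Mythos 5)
Your construction of $\pi_\lambda$ via the gradient flow of $\Phi$ and your identification of $F$ as the quotient by $(x,y)\mapsto(-x,-y)$ are both in the spirit of the paper (which uses outward normal lines to $\mathbb S$ instead of the gradient flow), and your branch-selection rule (pick the $F_\lambda$-preimage of $y$ closest to $\hat T_{0,a}(x)$) is exactly the paper's. However, there is a genuine gap at the step you yourself flag as ``the principal obstacle'': you assert that shrinking $\tilde\lambda(\rho,a)$ ensures that $y:=T_a\left(F_\lambda(x)\right)$ lies in the image of $F_\lambda$, i.e.\ in $\mathbb S_{\lambda,O_\rho}$. This is false, and no amount of shrinking $\lambda$ can make it true, because the failure is already present at $\lambda=0$: the points $P_i$ form periodic saddle-type orbits of the trace maps (equivalently, the $Q_i$ are periodic points of the hyperbolic map $\hat T_{0,a}$), so points of $\mathbb S_{\lambda,O_\rho}$ lying just outside $B_\rho(P_i)$ near the stable direction are contracted toward $T_a(P_i)=P_j$ and land inside $B_\rho(P_j)\subset O_\rho$. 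Hence $T_a\left(\mathbb S_{\lambda,O_\rho}\right)\not\subset\mathbb S_{\lambda,O_\rho}$ for every $\lambda\geq 0$, so $F_\lambda^{-1}(y)$ is simply undefined for such $x$ in your setup; your suggested remedy (quantifying the expansion factor $\mu^u(a)$ and letting $\tilde\lambda$ shrink with $a$) does not address this, since the obstruction comes from the stable contraction at the cusps, not from the size of the coupling.

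The paper resolves this with a two-scale device that your proposal is missing: first choose $\rho'\in(0,\rho)$ with $T_a^{-1}\left(O_{\rho'}\right)\subset O_\rho$, so that $T_a$ maps the bounded component at scale $\rho$ into the larger bounded component $\mathbb S_{\lambda,O_{\rho'}}$; then define $\pi_\lambda$ on $\mathbb S_{\lambda,O_{\rho'}}$, so that $F_\lambda=\pi_\lambda^{-1}\circ F$ is defined on $\mathbb T^2\setminus O'$ for some open set $O'\subset\tilde O_{r(\rho)}$. With this enlargement the two preimages of $T_a\left(F_\lambda(x)\right)$ under $F_\lambda$ always exist (note that the codomain of $\tilde T_{\lambda,a}$ in the statement is all of $\mathbb T^2$, precisely because $\tilde T_{\lambda,a}(x)$ may enter $\tilde O_{r(\rho)}$, and the semiconjugacy identity only makes sense because $F_\lambda$ is defined on this larger set). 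Once this is in place, the rest of your argument --- branch selection by proximity to $\hat T_{0,a}(x)$, and the $C^2$-convergence via $\pi_\lambda\to\mathrm{id}$ together with local inverse branches of $F$ --- goes through essentially as in the paper.
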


\begin{proof}
One can choose $\rho'\in(0,\rho)$ such that for $O_{\rho}=\bigcup_{i=1}^4 B_{\rho}\emb{\left(P_i\right)}$ and $O_{\rho'}=\bigcup_{i=1}^4 B_{\rho'}\emb{\left(P_i\right)},$ we must have 
    $$T_a^{-1}\left(O_{\rho'}\right)\subset O_{\rho}.$$
    There is a $\tilde\lambda(\rho,a)>0$ such that for each $\lambda\in\left[0,\tilde\lambda(\rho,a)\right]$, we have that $S_{\lambda}\setminus O_{\rho} \ \text{and} \ S_{\lambda}\setminus O_{\rho'}$ are both unions of five disjoint connected components, only one of which is bounded.

    Denoting the bounded component of $S_{\lambda}\setminus O_{\rho'}$ by $\mathbb S_{\lambda, O_{\rho'}}$, there is a smooth injective map $\pi_{\lambda}: \mathbb S_{\lambda, O_{\rho'}}\rightarrow \mathbb{S}$, that is defined as follows: 
    
    For each $\boldsymbol y\in \mathbb S$, consider the normal line $N_{\boldsymbol y}$ to $\mathbb S$ at $\boldsymbol y$ that is pointing outward. Either $N_{\boldsymbol{y}}$ intersects $\mathbb S_{\lambda, O_{\rho'}}$ uniquely, or not at all. In the former case, we set $\pi_{\lambda}^{-1}\left(\boldsymbol y\right)=N_{\boldsymbol{y}}\cap \mathbb S_{\lambda, O_{\rho'}}$, and this gives us (1).

Denote the bounded component of $ S_{\lambda}\setminus O_{\rho}$ by $\mathbb S_{\lambda, O_{\rho}}$. Then, there is a $r(\rho)>0$ and an open set $\tilde O_{r(\rho)}\subset \mathbb T^2$ which consists of neighborhoods of $Q_i$, $i=1,\dots,4$, each of diameter less than $\frac{r(\rho)}{2}$,
    such that $F^{-1}\circ\pi_\lambda\left(\mathbb S_{\lambda, O_{\rho}}\right)=\mathbb{T}^2\setminus \tilde O_{r(\rho)}$. \emb{Also, there is an open set $O'\subset \tilde O_{r(\rho)}$ such that $F^{-1}\circ\pi_\lambda\left(\mathbb S_{\lambda, O_{\rho'}}\right)=\mathbb{T}^2\setminus O'$.}
    
    \emb{Let us denote $F_{\lambda}:=\pi_{\lambda}^{-1}\circ F$ and let $P\in \mathbb{T}^2$}. Then, $F_{\lambda}(P)$ is contained in $\mathbb S_{\lambda, O_{\rho}}$ and $\left(T_{a}\circ F_{\lambda}\right)(P)\in \mathbb S_{\lambda, O_{\rho'}}$, so that
    $$\left(F^{-1}_{\lambda}\circ T_{a}\circ F_\lambda\right) (P)$$
    is a set consisting of two points $Q, Q'\in \mathbb T^2\setminus O'$. If $\tilde \lambda(\rho,a)$ is small enough, then one of the two points in $\{Q, Q'\}$ is much closer to $\hat T_{0, a}$ than the other.
    
    Let us choose $\tilde T_{\lambda, a}(P)$ to be the element from $\{Q, Q'\}$ that is closest to $\hat T_{0,a}(P)$. Then, $\tilde T_{\lambda, a}:\mathbb T^2\setminus \tilde O_{r(\rho)}\rightarrow \mathbb T^2\setminus O'$ is well-defined, and satisfies 
    $$F_{\lambda}\circ\tilde T_{\lambda, a}=T_{a}\restriction_{\mathbb S_{\lambda, O_{\rho}}} \circ F_{\lambda}.$$

    Since $\left\|\pi_{\lambda}\circ T_a\circ \pi^{-1}_\lambda\restriction_{\mathbb S_{0, O_\rho}}-T_a\restriction_{\mathbb S_{0,O_\rho}}\right\|_{C^2}\rightarrow 0$ as $\lambda\rightarrow 0$, it follows that $\left\|\tilde T_{\lambda, a}-\hat T_{0,a}\right\|_{C^2}\rightarrow 0$, on $\mathbb{T}^2\setminus \emb{\tilde O_{r(\rho)}}$, as $\lambda\rightarrow 0$.

    Lastly, since $\text{diam}\left(B_\rho(P_i)\right)\rightarrow 0$ as $\rho\rightarrow 0$ for each $i=1,\dots, 4$, we can choose $r(\rho)$ so that $r(\rho)\rightarrow 0$ as $\rho\rightarrow 0$.

\end{proof}

\section{\emb{Dynamical Tools and Setup}}
\emb{We will now setup various tools and notations that will be helpful in the main proof.}

\emb{Namely, in Section \ref{Main Estimates and Non-Stationary Trace Map Dynamics} we will invoke Lemma \ref{connection} and translate Theorem \ref{first intermediate theorem} into a problem concerning non-stationary dynamics of Anosov maps around small neighborhoods of each $Q_i$. Rather than working with arbitrary neighborhoods, we will define a notion of dynamical rectangles, that are analogous to Markov rectangles for hyperbolic maps, that will be useful in examining the dynamics  near each $Q_i$.} \emb{This section is dedicated to precisely defining these rectangles, and studying some basic geometric properties that they exhibit.}
\subsection{\emb{Boxes and Local Cones}}\label{local cones and boxes notation}
\emb{Before defining the rectangles with dynamical properties, we will first restrict ourselves to small preliminary boxes around each $Q_i$. This subsection is dedicated to choosing these boxes, deriving some needed properties for later, and also introducing some notations.}

Recalling our definition of $V$ from (\ref{main cone and line}), consider the projections $\pi_{V}: V\oplus V^{\perp}\rightarrow V$ and $\pi_{V^{\perp}}: V\oplus V^{\perp}\rightarrow V^{\perp}$, and set $\|.\|_{V}:=\left | \pi_{V}(.) \right |$ and $\|.\|_{V^{\perp}}:=\left | \pi_{V^{\perp}}(.) \right |$.
For $\boldsymbol{Q}\in\mathbb{R}^2$, consider the rectangular neighborhood given by $$\text{Box}_{r}(\boldsymbol Q):=\left\{\boldsymbol P : \|\boldsymbol P - \boldsymbol Q \|_{V}\leq r\right\}\times \left\{\boldsymbol P : \|\boldsymbol P - \boldsymbol Q \|_{V^{\perp}}\leq r\right\}.$$ 
\emb{For these choice of sets, it is important that each map $A_a$ overflows the square  $\text{Box}_{1}(0)$. We use this fact later in Theorem \ref{stable manifold theorem} and Lemma \ref{inner rectangle construction}. Formally, we have the following:}
\emb{\begin{lemma}\label{linear overflow}
    Let $L$ be a line segment through $0$ that is contained in $K^u$ and connects the boundary lines of $\text{Box}_{1}(0)$ that are parallel to $V^{\perp}$. Then, for any $a\in\mathbb N$, we have that $A_a(L)$ is a line through $0$ that is contained in $K^u$ and extends outside of the square $\text{Box}_{1}(0)$ (see Figure \ref{overflow illustration}).\\
\indent Moreover, for each $\beta>\beta_0$ sufficiently close to $\beta_0$, there is a $\epsilon=\epsilon(a,\beta)>0$ such that if $f\in \text{Diff}^1\left(\mathbb R^2\right)$ and 
$\left\|A_a-f\right\|<\epsilon,$
then $f(L)$ is a curve that overflows $\text{Box}_{1}(0)$. That is, $f(L)$ is contained in $K^u_{\beta}$, and its endpoints belong to the compliment of $\text{Box}_{1}(0)$.
\end{lemma}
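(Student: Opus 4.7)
The plan is to first establish the linear statement using the cone invariance and expansion estimates from Lemma \ref{linear cone condition}, and then extend to the perturbative version by a standard $C^1$-openness argument.

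For the linear part, let $\boldsymbol P_{\pm}$ denote the two endpoints of $L$. By construction $\|\boldsymbol P_{\pm}\|_V = 1$, so in particular $|\boldsymbol P_{\pm}| \geq 1$. Since $A_a$ is linear, $A_a(L)$ is a line through $0$, and by Lemma \ref{linear cone condition} it is contained in $\text{int}(K^u)$. The same lemma combined with (\ref{initial beta requirements}) gives $|A_a \boldsymbol P_{\pm}| \geq \overline{\mu}_{\beta_0}|\boldsymbol P_{\pm}| \geq \sqrt{2} - 0.01$, and since $\theta_0(\beta_0) < \pi/3$ every vector in $K^u$ lies within angle $\pi/6$ of the line $V$, whence $\|A_a \boldsymbol P_{\pm}\|_V \geq |A_a \boldsymbol P_{\pm}|\cos(\pi/6) \geq (\sqrt{2} - 0.01)\cdot\frac{\sqrt{3}}{2} > 1$. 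This places $A_a \boldsymbol P_{\pm}$ strictly outside $\text{Box}_1(0)$ and yields the overflow.

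For the perturbation statement, I fix $\beta > \beta_0$ close enough to $\beta_0$ that $K^u \subset \text{int}(K^u_\beta)$. If $\|f - A_a\|_{C^1} < \epsilon$, then the tangent vectors of $f(L)$, which have the form $Df(\boldsymbol P) v$ for $v$ tangent to $L$ (hence in $K^u$), are $O(\epsilon)$-close to $A_a v \in \text{int}(K^u_\beta)$, so for $\epsilon$ small they lie in $K^u_\beta$. The endpoints $f(\boldsymbol P_{\pm})$ are $\epsilon$-close to $A_a \boldsymbol P_{\pm}$, and the strict inequality $\|A_a \boldsymbol P_{\pm}\|_V > 1$ established above promotes to $\|f(\boldsymbol P_{\pm})\|_V > 1$ for $\epsilon$ small, placing $f(\boldsymbol P_{\pm})$ outside $\text{Box}_1(0)$.

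The main obstacle I foresee concerns the precise meaning of the phrase ``$f(L)$ is contained in $K^u_\beta$''. Read as a set containment it is delicate, since $A_a(L)$ passes through the apex of the cone, and a $C^1$-small perturbation can swing the angular position by $O(1)$ at scale $O(\epsilon)$; absent further hypotheses on $f$ the curve $f(L)$ need not sit in $K^u_\beta$ as a point set. I would read the cone condition in the standard hyperbolic-dynamics sense as a tangent-vector condition along $f(L)$, which is controlled directly by $\|Df - A_a\|_{C^0} < \epsilon$ together with the angular slack $K^u_\beta \setminus K^u$ obtained by enlarging $\beta_0$ to $\beta$; combined with the endpoint estimate, this gives the overflow of $f(L)$ across $\text{Box}_1(0)$ in precisely the form needed later in Theorem \ref{stable manifold theorem} and Lemma \ref{inner rectangle construction}.
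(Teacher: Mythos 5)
Your proposal is correct and follows essentially the same route as the paper: the linear overflow is obtained from cone invariance and the minimal expansion $\overline{\mu}_{\beta_0}\geq \sqrt{2}-0.01$ of Lemma \ref{linear cone condition} together with the angular bound from (\ref{initial beta requirements}) (your projection estimate $\|A_a\boldsymbol P_{\pm}\|_V>1$ is just a rephrasing of the paper's comparison of $\overline{\mu}_{\beta_0}$ with $\frac{2}{\sqrt{3}}$), and the perturbative part is deduced exactly as in the paper from $C^0$-closeness of the endpoints plus invariance of $K^u_{\beta}$ under $Df$. The apex caveat you raise is genuine---the paper's own proof also passes from "tangent lines in $K^u_{\beta}$" to the set containment $f(L)\subset K^u_{\beta}$, which literally requires the image curve to pass through the apex---but in all later uses (Theorem \ref{stable manifold theorem}, Lemma \ref{inner rectangle construction}) the perturbed maps agree with the linear ones on a neighborhood of the points $Q_i$, so the apex is fixed and the lemma is only ever invoked in the tangency-plus-endpoints form you adopt; hence your reading is consistent with the paper's argument.
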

\begin{proof}
    Suppose that $a\in\mathbb N$ and consider the square $\text{Box}_{1}(0)$. Then, from our choice of $\beta_0$ (see \ref{main cone and line}), we know that the length of any segment $L$ contained in $K^u$ that connects $0$ and a boundary line of $\text{Box}_{1}(0)$ parallel to $V^\perp$, has length strictly less than $\frac{2}{\sqrt{3}}$ and length no less than $1$. If we iterate this segment under $A_a$, then its length will be greater than $\overline \mu_{\beta_0}$. However, from (\ref{initial beta requirements}), we have 
$$\overline \mu_{\beta_0}\geq\sqrt{2}-0.01> \frac{2}{\sqrt{3}}.$$
It follows that $A_{a}(L)$ is a line through $0$ that is contained in $K^u$ and extends outside of the square $\text{Box}_{1}(0)$.\\
From the discussion before (\ref{initial beta requirements}), for $\beta>\beta_0$ close enough to $\beta$, we can still guarantee that the length of any segment $L$ contained in $K^u_{\beta}$ that connects $0$ and a boundary line of $\text{Box}_{1}(0)$ parallel to $V^\perp$, has length strictly less than $\frac{2}{\sqrt{3}}$. Then, by shrinking $\beta$ is needed, we have that 
$$\overline \mu_{\beta}\geq (\sqrt{2}-0.01)-0.001$$
and hence, this expansion is still greater than $\frac{2}{\sqrt{3}}$. If $L$ is such a segment and $f$ is sufficiently $C^1$ close to $A_a$, then the endpoints of $f(L)$ will be close to those of $A_a(L)$, and we can guarantee that $Df\left(K^u_{\beta}\right)\subset K^u_\beta$. This would imply that the tangent line at each point of $f(L)$ is contained in $K^u_{\beta}$, and hence $f(L)\subset K^u_{\beta}$.
\end{proof}}

\emb{We now define some sets and notations that will be used frequently throughout the paper.}

\begin{figure}[h]
    \begin{tikzpicture}

\filldraw[black] (0, 0) circle (1pt) node[below]{\scriptsize $0$};
\draw[black, very thick] (2,2) rectangle (-2,-2) node[anchor=north]{\small $\text{Box}_{1}\left(0 \right)$};

\draw[black, very thick] (2, .65*2) -- (0,0) -- (2,-.65*2) --(2, .65*2);
\draw[black, very thick] (-2, .65*-2) -- (0,0) -- (-2,.65*2) --(-2, .65*-2);

\fill[black, opacity=0.15] (2, .65*2) -- (0,0) -- (2,-.65*2) --(2, .65*2);

\fill[black, opacity=0.15] (-2, .65*-2) -- (0,0) -- (-2,.65*2) --(-2, .65*-2);

\draw[blue, very thick] (2, .25*2) -- (-2, .25*-2) node[below right]{\small $L$};

\draw[purple, very thick] (2.5, -.45*2.5) -- (-2.5, -.45*-2.5) node[below left]{\small $A_a(L)$};

\filldraw[black] (0, 0) circle (2pt) ;

\filldraw[blue] (2, .25*2) circle (2pt) ;
\filldraw[blue] (-2, .25*-2) circle (2pt) ;

\filldraw[purple] (2.5, -.45*2.5) circle (2pt) ;
\filldraw[purple] (-2.5, -.45*-2.5) circle (2pt) ;

\coordinate[label=below: \scriptsize $\emb{K^u}$] (C) at (1.65,-1.25);

\draw[<->]  (-5,1) -- (-5,2) node[right]{\scriptsize $V^{\perp}$};
\draw[<->]  (-5.5,1.5) -- (-4.5,1.5) node[right]{\scriptsize $V$};

\end{tikzpicture}
\caption{Illustration of \emb{Lemma} \ref{linear overflow}}
    \label{overflow illustration}

\end{figure}
Recalling that $\pi: \mathbb{R}^2\rightarrow \mathbb{T}^2$ is the universal covering map given by $(x,y)\mapsto (x,y) \ (\mathrm{mod} \ 1)$, then for each $\boldsymbol Q\in \mathbb{R}^2$, we have a local diffeomorphism 

\begin{equation}\label{local projection}
    \phi_{\boldsymbol Q}:=\pi\restriction_{\text{Box}_{\frac{1}{10}}(\boldsymbol Q)}: \text{Box}_{\frac{1}{10}}(\boldsymbol Q)\rightarrow \pi\left(\text{Box}_{\frac{1}{10}}(\boldsymbol{Q})\right),
\end{equation}

where $\pi\left(\text{Box}_{\frac{1}{10}}(\boldsymbol{Q})\right)$ is the rectangular neighborhood centered at $\pi(\boldsymbol Q)$, with sides parallel to the lines $\pi(V)$ and $\pi(V^{\perp})$, respectively, of lengths given by $\frac{1}{5}$. 

\emb{\begin{definition}
    For each $r\in \left(0, \frac{1}{10}\right)$, we define (see Figure \ref{box illustration})
    \begin{itemize}
        \item $\mathbb B_r(Q):=\pi\left(\text{Box}_r\left(\pi^{-1}(Q)\right)\right)$, which is the rectangular neighborhood centered at $Q$ of side lengths $2r$
        \item $\partial^s\mathbb B_r(Q)$ to be the two line segments of $\partial \mathbb B_r(Q)$ that are parallel to the line $\pi\left(V^{\perp}\right)$
        \item $\partial^u\mathbb B_r(Q)$ to be the two line segments of $\partial \mathbb B_r(Q)$ that are parallel to the line $\pi\left(V\right)$
    \end{itemize}
    In particular, let we set $\mathbb B_i:=\mathbb B _{\frac{1}{10}}(Q_i)$, $\partial ^s\mathbb B_i:=\partial^s\mathbb B _{\frac{1}{10}}(Q_i)$ and $\partial ^u\mathbb B_i:=\partial^u\mathbb B _{\frac{1}{10}}(Q_i)$.
\end{definition}}

\begin{figure}[h]
   \begin{center}
    \begin{tikzpicture}

\filldraw[black] (-3, 0) circle (1pt) node[below right]{\scriptsize $\boldsymbol Q$};
\draw[black, very thick] (-4,1) rectangle (-2,-1) node[anchor=north]{\small $\text{Box}_{r}\left(\boldsymbol Q \right)$};

\draw[red, dashed] (-3, 0) -- (-4, 0) node[midway, above]{\scriptsize $r$};

\filldraw[black] (3,0) circle (1pt) node[below right]{\scriptsize $Q$};
\draw[black, very thick] (2,1) rectangle (4, -1) node[anchor=north]{$\mathbb{B}_r(Q)$};

\draw[->]  (-1.7,0) -- (1.7,0) node[midway, above]{$\phi_{\boldsymbol Q}$};

\filldraw[black] (-5, 1.5) circle (1pt);

\draw[<->]  (-5,1) -- (-5,2) node[right]{\scriptsize $V^{\perp}$};
\draw[<->]  (-5.5,1.5) -- (-4.5,1.5) node[right]{\scriptsize $V$};

\draw[red, dashed] (2, 0) -- (3, 0) node[midway, above]{\scriptsize $r$};

\coordinate[label=right: \scriptsize $\partial^s\mathbb B_r(Q)$] (A) at (4,0);
\coordinate[label=above: \scriptsize $\partial^u\mathbb B_r(Q)$] (B) at (3,1);
\end{tikzpicture}

    \caption{Illustration of local boxes.}
    \label{box illustration}

    \end{center}
\end{figure}
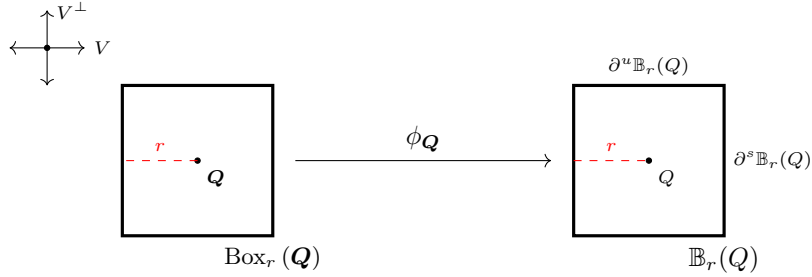

\emb{\begin{definition} (see Figure \ref{local cones}) For any cone $K$ containing $K^u$ and transversal to $K^s$, we define 
    \begin{equation}\label{+ and - cones}
    K_{-}:=K^u\cap\{x\leq 0\} \ \text{and} \ K_{+}:=K\cap \{x \geq 0\}.
\end{equation}
For a cone $K\subset\mathbb R^2$, define the local projected cone $\tilde K(Q)\subset \mathbb B_{\frac{1}{10}}(Q)$ via
$$\tilde K(Q):=\pi\left(\left(\boldsymbol Q+K\right)\cap\text{Box}_{\frac{1}{10}}(\boldsymbol Q)\right)$$
If $K$ is a cone containing $K^u$ and transversal to $K^s$, then set $$\tilde K_+(Q):=\pi\left(\left(\boldsymbol Q+K_+\right)\cap\text{Box}_{\frac{1}{10}}(\boldsymbol Q)\right) \ \text{and} \ \tilde K_-(Q):=\pi\left(\left(\boldsymbol Q+K_-\right)\cap\text{Box}_{\frac{1}{10}}(\boldsymbol Q)\right)$$
With this in mind, we define
\begin{itemize}
    \item $\partial^s_{+}\mathbb B_r(Q)$ to be the line segment from $\partial^s\mathbb B_r(Q)$ that intersects $\tilde K_{+}^u(Q)$
    \item $\partial^s_{-}\mathbb B_r(Q)$ to be the line segment from $\partial^s\mathbb B_r(Q)$ that intersects $\tilde K_{-}^u(Q)$
    \item $K^{s,u}(x,y)$ to be the cone $K^{s,u}$ contained in the tangent space $T_{(x,y)}\mathbb{T}^2$
\end{itemize}
 Lastly, we set
\begin{equation}\label{cone fields}
    \mathcal K^s:=\left\{K^s(x,y)\right\}_{(x,y)\in\mathbb{T}^2} \ \text{and} \ \mathcal K^u:=\left\{K^u(x,y)\right\}_{(x,y)\in\mathbb{T}^2},
\end{equation} and refer to these collections as \textit{stable}, respectively \textit{unstable}, \textit{cone fields}. 
\end{definition}}

\begin{figure}[h]
   \begin{center}
    \begin{tikzpicture}

\filldraw[black] (-3, 0) circle (1pt) node[below]{\scriptsize $\boldsymbol Q$};
\draw[black, very thick] (-4,1) rectangle (-2,-1) node[anchor=north]{\small $\text{Box}_{r}\left(\boldsymbol Q \right)$};

\draw[black, very thick] (4, .65*1)-- (2,.65*-1);
\draw[black, very thick] (4, -.65*1)-- (2,.65*1);

\fill[black, opacity=0.15] (4, .65*1) -- (3,0) -- (4, -.65*1)--(4, .65*1);

\fill[black, opacity=0.15] (2,-.65*1) -- (3,0) -- (2,.65*1) --(2,-.65*1);

\filldraw[black] (3,0) circle (1pt) node[below]{\scriptsize $Q$};
\draw[black, very thick] (2,1) rectangle (4, -1) node[anchor=north]{$\mathbb{B}_r(Q)$};

\draw[black, very thick] (-4, .65*-1)-- (-2,.65*1);
\draw[black, very thick] (-4, .65*1)-- (-2,-.65*1);

\fill[black, opacity=0.15] (-4, .65*-1) -- (-3,0) -- (-4, .65*1)--(-4, .65*-1);

\fill[black, opacity=0.15] (-2,.65*1) -- (-3,0) -- (-2,-.65*1)--(-2,.65*1);

\draw[->]  (-1.7,0) -- (1.7,0) node[midway, above]{$\phi_{\boldsymbol Q}$};

\filldraw[black] (-5, 1.5) circle (1pt);

\draw[<->]  (-5,1) -- (-5,2) node[right]{\scriptsize $V^{\perp}$};
\draw[<->]  (-5.5,1.5) -- (-4.5,1.5) node[right]{\scriptsize $V$};

\coordinate[label=left: \scriptsize $\boldsymbol -$] (C) at (-3.5,0);

\coordinate[label=right: \scriptsize $\boldsymbol +$] (C) at (-2.5,0);

\coordinate[label=left: \scriptsize $\boldsymbol -$] (C) at (2.5,0);

\coordinate[label=right: \scriptsize $\boldsymbol +$] (C) at (3.5,0);

\coordinate[label=below: \scriptsize $\boldsymbol Q+K$] (C) at (-3.25,1);

\coordinate[label=below: \scriptsize $\tilde K(Q)$] (C) at (3.25,1);
\end{tikzpicture}

    \caption{Illustration of a local cone.}
    \label{local cones}

    \end{center}
\end{figure}
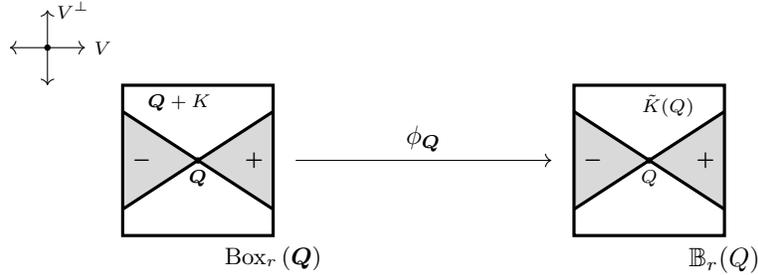

\emb{The next lemma is another property concerning these boxes that will be frequently used later in Section \ref{Main Estimates and Non-Stationary Trace Map Dynamics}.}
\emb{\begin{lemma}\label{curve length compared to distance}
    There is a $C_1>0$ such that if $\gamma\subset \mathbb T^2$ is a curve tangent to $\mathcal K^u$, and is contained in $ \mathbb B_{\frac{1}{10}}(Q)$, for some $Q\in\mathbb T^2$, with endpoints $P_1$ and $P_2$, then 
    $$\text{length}(\gamma)\leq C_1\left|P_1-P_2\right|.$$
\end{lemma}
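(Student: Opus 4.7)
The plan is to lift $\gamma$ to $\mathbb R^2$ via the local diffeomorphism $\phi_{\boldsymbol Q}$ from (\ref{local projection}) and exploit the explicit slope bounds that the cone $K^u=K^u_{\beta_0}$ imposes on tangent vectors.

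First, observe that the Euclidean diameter of the lift $\text{Box}_{\frac{1}{10}}(\boldsymbol Q)\subset\mathbb R^2$ is $\frac{\sqrt 2}{5}<\frac12$. By a standard argument (any other lift of a point differs by a nonzero integer translation, and the reverse triangle inequality then produces a competing distance of at least $1-\frac{\sqrt 2}{5}>\frac12$), the flat torus distance between any two points $P_1,P_2\in\mathbb B_{\frac{1}{10}}(Q)$ coincides with the Euclidean distance between their lifts $\tilde P_i:=\phi_{\boldsymbol Q}^{-1}(P_i)$, and Euclidean curve lengths in the lift agree with lengths of their images in $\mathbb B_{\frac{1}{10}}(Q)$. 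So it suffices to prove $\text{length}(\tilde\gamma)\leq C_1|\tilde P_1-\tilde P_2|$ for the lifted curve $\tilde\gamma\subset\text{Box}_{\frac{1}{10}}(\boldsymbol Q)$.

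Next, since $\gamma$ is tangent to $\mathcal K^u$ and $\phi_{\boldsymbol Q}$ is a local isometry that acts as a translation on tangent spaces, $\tilde\gamma$ has tangent vector in $K^u$ at every point. By the definition of $K^u_{\beta_0}$, every nonzero vector $v=c_1(1,-\beta_0)+c_2(1,1+2\beta_0)$ with $c_1c_2\geq 0$ satisfies $v_x=c_1+c_2\neq 0$ (since $c_1,c_2$ share a sign and are not both zero) and its slope lies in $[-\beta_0,1+2\beta_0]$. Consequently, a continuously varying nonzero tangent in $K^u$ keeps its $x$-component of constant sign, and reparametrizing so that the $x$-coordinate is monotone presents $\tilde\gamma$ as the graph of a Lipschitz function $f:[x_1,x_2]\to\mathbb R$ with $|f'|\leq 1+2\beta_0$ almost everywhere.

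Finally, the arclength formula yields
\[
\text{length}(\tilde\gamma)=\int_{x_1}^{x_2}\sqrt{1+f'(t)^2}\,dt\leq\sqrt{1+(1+2\beta_0)^2}\,|x_2-x_1|\leq\sqrt{1+(1+2\beta_0)^2}\,|\tilde P_1-\tilde P_2|,
\]
so the lemma holds with $C_1:=\sqrt{1+(1+2\beta_0)^2}$. There is really no substantial obstacle here; the only mildly subtle point is verifying that $\tilde\gamma$ is a graph rather than a curve that folds back on itself, which is precisely what the constant-sign argument above supplies.
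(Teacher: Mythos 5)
Your proof is correct and follows essentially the same route as the paper: the cone condition forces the lifted curve to be a Lipschitz graph over a fixed transversal direction, and the arclength integral then gives the comparability constant. The only cosmetic difference is that you take the graph over the horizontal axis while the paper phrases it as a graph over $\pi(V)$ (the bisector direction of $K^u$); both work since every nonzero vector of $K^u_{\beta_0}$ has a nonvanishing component in either direction.
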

\begin{proof}
    This follows from the fact that any curve tangent to $\mathcal K^u$ contained in $\mathbb B_{\frac{1}{10}}(Q)$ is the graph of a Lipschitz map from $\pi(V)\cap \mathbb B_{\frac{1}{10}}(Q)$ to $\pi\left(V^{\perp}\right)\cap \mathbb B_{\frac{1}{10}}(Q)$ with Lipschitz constant determined by the cone $K^u$.
\end{proof}}

\subsection{\emb{Dynamical Rectangles}}\label{rectangle section}

\emb{We are now ready to define these so-called dynamical rectangles.}

\begin{definition}
    A \textit{dynamical rectangle} (or \textit{rectangle}) is a curvilinear rectangle $R$, such that two opposite boundary curves of $R$ are $C^1$ curves tangent to $\mathcal K^s$, referred to as the \textit{stable boundaries}, denoted by $\partial^s R$, and the other two boundary curves are transversal to $\mathcal K^s$, referred to as the \textit{non-stable boundaries}, and denoted by $\partial^{ns}R$.
\end{definition}

\emb{\begin{definition}
    Let $\gamma$ be a $C^1$ curve contained in $\mathbb B_{\frac{1}{10}}(Q)$ that is tangent to $\mathcal K^s$. We say that $\gamma$
    \begin{itemize}
        \item \textit{connects opposite boundaries of} $\tilde K^u(Q)$ if its endpoints belong to $\partial \tilde K_{+}^u(Q)$
        \item \textit{connects points} $P, P'\in  \mathbb B_{\frac{1}{10}}(Q)$, if these are the endpoints of $\gamma$, and $\gamma$ is strictly contained in $\overline{  B_{\frac{1}{10}}(Q) }$
        \item \textit{connects disjoint curves} $\gamma_1, \gamma_2\subset B_{\frac{1}{10}}(Q)$ if it connects points of these curves
    \end{itemize}
\end{definition}}

\emb{\begin{definition}
    Let $Q\in\mathbb T^2$ and $K\supset K^u$ be a cone transversal to $K^s$. We say 
    \begin{itemize}
        \item Two $C^1$ curves $\gamma_1, \gamma_2\subset \mathbb B_{\frac{1}{10}}(Q)$ tangent to $\mathcal K^s$, are \textit{centered at} $Q$ \textit{with respect to} $\tilde K(Q)$, if 
           \begin{itemize}
            \item There is a sub-curve of $ \gamma_1$ that connects opposite boundaries of $\tilde K_{+}(Q)$ and does not intersect $\tilde K_{-}(Q)$
            \item There is a sub-curve of $ \gamma_2$ that connects opposite boundaries of $\tilde K_{-}(Q)$ and does not intersect $\tilde K_{+}(Q)$
        \end{itemize}
        or vice versa (see Figure \ref{dynamical rectangle}).
        \item A rectangle $R\subset B_{\frac{1}{10}}(Q)$ is \textit{centered at} $Q$ with respect to $\tilde K(Q)$ and its stable boundary curves are centered at $Q$ with respect to $\tilde K(Q) $. In this case we denote $\partial^s_+R$ to be the curve from $\partial^s R$ that has a segment connecting opposite boundaries of $\tilde K^u_+(Q)$ and $\partial^s_{-}R$ to be the other curve which has a segment that connects opposite boundaries of $\tilde K^u_-(Q)$.
        (see Figure \ref{dynamical rectangle})
    \end{itemize}
\end{definition}}

\emb{\begin{remark}
    Notice that any pair of curves centered at $Q$ with respect to $\tilde K(Q)$, must be centered at $Q$ with respect to $\tilde K^u(Q)$. If not specified otherwise, we say that two curves are centered at $Q$, if they are centered at $Q$ with respect to $\tilde K^u(Q)$.\\
    \\
Also, we denote $\partial^s_+R$ to be the curve from $\partial^s R$ that has a segment connecting opposite boundaries of $\tilde K^u_+(Q)$ and $\partial^s_{-}R$ to be the other curve which has a segment that connects opposite boundaries of $\tilde K^u_-(Q)$. It should also be noted that $\partial^s_{\pm}R\cap \tilde K^u_{\mp}(Q_i)=\emptyset$.
\end{remark}}

\subsection{\emb{Geometric Properties of Dynamical Rectangles}}
\emb{We now derive two propositions that describe useful geometric properties concerning these local cones and dynamical rectangles. The next proposition is illustrated by Figure \ref{inner constant for box figure}}.
\emb{\begin{proposition}\label{inner constant for box}
    There is a $C^*_0>0$ such that for any $i=1,\dots, 4$, the following hold:
    \begin{itemize}
        \item[(1)] If $\gamma$ is a $C^1$ curve tangent to $\mathcal K^s$ that is contained in $\overline{\mathbb B}_i$ satisfying $$\text{dist}(\gamma, Q_i)<C^{*}_0,$$ then $\gamma$ is disjoint from $\partial^s\mathbb B_i$.
        \item[(2)] If $\kappa$ is a curve tangent to $\mathcal K^u$ with endpoints $P_1$ and $P_2$ satisfying $P_1\in \tilde K_{+}(Q_i)$, $P_2\in\tilde K_-(Q_i)$, and $$\max\left\{\left|P_1-Q_i\right|, \left|P_2-Q_i\right|\right\}<C^{*}_0,$$ we must have $\kappa\subset \mathbb B_i$. That is $\kappa$ is disjoint from $\partial ^u\mathbb B_i$.
    \end{itemize}
 \end{proposition}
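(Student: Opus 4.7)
The plan is to reduce both statements to concrete slope/length bookkeeping in the $(V,V^{\perp})$-coordinates centered at $Q_i$, using only the cone bound $\theta_0(\beta_0)<\pi/3$ inherited from (\ref{initial beta requirements}). Working in the chart $\phi_{\boldsymbol Q_i}^{-1}$ (see (\ref{local projection})), I can assume everything sits inside the Euclidean square $\text{Box}_{1/10}(\boldsymbol Q_i)$ of half-side $r=\tfrac{1}{10}$, and identify $Q_i$ with $\boldsymbol Q_i$. Since $\mathcal K^u$ is centered on $V$ with half-angle $\theta_0/2<\pi/6$ and $\mathcal K^s$ is its orthogonal, for any arclength-parametrized $\mathcal K^s$-tangent curve we get $|dV/ds|\le\sin(\theta_0/2)$ and $|dV^{\perp}/ds|\ge\cos(\theta_0/2)$, so $\gamma$ is $V^{\perp}$-monotone with
$$|V(P)-V(P')|\le \tan(\theta_0/2)\,|V^{\perp}(P)-V^{\perp}(P')|\qquad\text{for all }P,P'\in\gamma,$$
and the analogous bound with $V$ and $V^{\perp}$ swapped holds for $\mathcal K^u$-tangent curves.

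For part (1), pick $P^{*}\in\gamma$ with $|P^{*}-Q_i|<C_0^{*}$. Then $|V^{\perp}(P^{*})-V^{\perp}(Q_i)|<C_0^{*}$, and since $\gamma\subset\overline{\mathbb B}_i$ any point $P\in\gamma$ satisfies $|V^{\perp}(P)-V^{\perp}(P^{*})|\le r+C_0^{*}$. Combining with the slope estimate and $|V(P^{*})-V(Q_i)|<C_0^{*}$ yields
$$|V(P)-V(Q_i)|\le C_0^{*}+\tan(\theta_0/2)(r+C_0^{*})=C_0^{*}(1+\tan(\theta_0/2))+r\tan(\theta_0/2).$$
Reaching $\partial^{s}\mathbb B_i$ requires $|V(P)-V(Q_i)|=r$, so the conclusion holds whenever
$$C_0^{*}<\frac{r(1-\tan(\theta_0/2))}{1+\tan(\theta_0/2)},$$
which is strictly positive because $\theta_0(\beta_0)<\pi/3$ implies $\tan(\theta_0/2)<1/\sqrt 3<1$.

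For part (2), the $\mathcal K^u$-tangency forces $\kappa$ to be $V$-monotone. Since $P_1\in\tilde K_+(Q_i)$ and $P_2\in\tilde K_-(Q_i)$ with $|P_j-Q_i|<C_0^{*}$, the $V$-range of $\kappa$ is the interval $[V(P_2),V(P_1)]\subset(V(Q_i)-C_0^{*},V(Q_i)+C_0^{*})$, hence has length at most $2C_0^{*}$. For every $P\in\kappa$ one gets $|V^{\perp}(P)-V^{\perp}(P_1)|\le \tan(\theta_0/2)\,|V(P)-V(P_1)|\le 2C_0^{*}\tan(\theta_0/2)$, which combined with $|V^{\perp}(P_1)-V^{\perp}(Q_i)|<C_0^{*}$ gives
$$|V^{\perp}(P)-V^{\perp}(Q_i)|\le C_0^{*}(1+2\tan(\theta_0/2))\qquad\text{and}\qquad|V(P)-V(Q_i)|<C_0^{*}.$$
Staying inside $\mathbb B_i$ then amounts to $C_0^{*}(1+2\tan(\theta_0/2))<r$, again positive under $\theta_0(\beta_0)<\pi/3$.

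Taking $C_0^{*}$ smaller than the minimum of the two thresholds above gives a single constant that works for all four cusps, since the argument uses only the translation-invariant cone field and the uniform box size $r=\tfrac{1}{10}$. There is no serious obstacle: the only delicate point is resisting the temptation to bound the $V^{\perp}$-spread from $P^{*}$ by the trivial $2r$, which would give a weaker condition $\tan(\theta_0/2)<1/2$; using instead $r+C_0^{*}$ (which exploits that $P^{*}$ itself is already within $C_0^{*}$ of the center in the $V^{\perp}$ direction) is what makes $\theta_0(\beta_0)<\pi/3$ sufficient.
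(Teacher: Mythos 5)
Your argument is, at heart, the same observation the paper makes (the stable cone is uniformly transversal to $\partial^s\mathbb B_i$ and the unstable cone to $\partial^u\mathbb B_i$, so curves tangent to the cone fields passing near $Q_i$ cannot reach those sides); the paper leaves it as a soft positive-distance remark, while you quantify it with Lipschitz-graph estimates in the $(V,V^{\perp})$ coordinates and produce an explicit admissible $C_0^*$. That is a legitimate and in fact more informative proof, but two points need repair. First, $V=V_0(\beta_0)$ is \emph{not} the angular bisector of $K^u_{\beta_0}$: it is the span of the sum of the two non-unit spanning vectors $(1,-\beta_0)^T$ and $(1,1+2\beta_0)^T$, so your "half-angle $\theta_0/2<\pi/6$" and the deduction $\tan(\theta_0/2)<1/\sqrt3$ are not justified as stated. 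The argument survives because what you actually need is that the maximal angle $\theta_{\max}$ between $V$ and the boundary lines of $K^u_{\beta_0}$ satisfies $\tan\theta_{\max}<1$; from the explicit formulas, $\theta_{\max}\le\arctan\bigl(\tfrac{1+\beta_0}{2}\bigr)+\arctan(\beta_0)<\arctan(0.55)+\arctan(0.1)<\tfrac{\pi}{4}$ for $\beta_0\in(0,0.1)$, so replacing $\tan(\theta_0/2)$ by $\tan\theta_{\max}$ everywhere makes your thresholds correct; note that the hypothesis $\theta_0(\beta_0)<\pi/3$ alone would not suffice if $V$ sat near one edge of the cone, so the computation with the actual $V_0(\beta_0)$ is genuinely needed.

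Second, in part (2) the step "the $V$-range of $\kappa$ is $[V(P_2),V(P_1)]$" presupposes a lift of $\kappa$ to $\mathbb R^2$ in which both endpoints lie near the \emph{same} lift of $Q_i$; the coordinates $V,V^{\perp}$ are not globally defined on $\mathbb T^2$. A long $\mathcal K^u$-tangent curve could leave $\mathbb B_i$, wind around the torus, and return with its terminal point in $\tilde K_-(Q_i)$ within $C_0^*$ of $Q_i$, in which case the conclusion $\kappa\subset\mathbb B_i$ fails; so the statement is only true under the implicit localization (e.g.\ $\kappa\subset\overline{\mathbb B}_i$, as in part (1), or $\kappa$ connecting $P_1,P_2$ inside the box in the sense of the paper's definition), which is how it is used. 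The paper's own proof of (2) ("similar") carries the same implicit assumption, but since you are writing the estimate out explicitly, you should state the lift/localization before speaking of monotonicity of the $V$-coordinate.
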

 }

 \begin{proof}
    \emb{We prove part (1). Since $\text{dist}\left(\partial^s\mathbb B_i, \tilde K^s(Q_i)\right)>0$, by construction of $K^s$, if $L\subset\overline {\mathbb B}_i$ is a line segment parallel to a segment of $\partial \tilde K^s(Q_i)$, and sufficiently close to $Q_i$, we must also have $\text{dist}\left(\partial^s\mathbb B_i, L\right)>0.$ It follows that the same is true for any curve $\gamma\subset \overline {\mathbb B}_i$ that is tangent to $\mathcal K^s$. The proof of (2) is similar.}
 \end{proof}
 
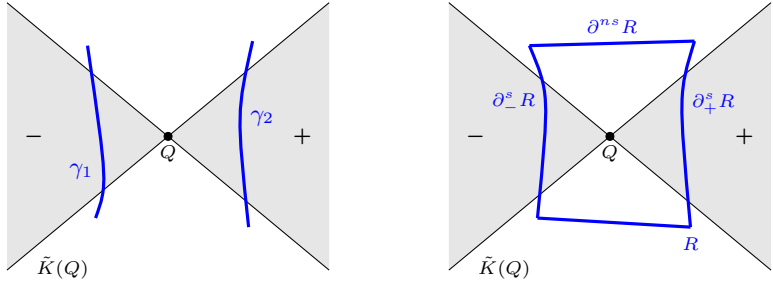
\begin{figure}
    \begin{center}
            \begin{tikzpicture}[scale=.75]
\begin{axis}[xtick=\empty, ytick=\empty, axis line style={draw=none}, tick style={draw=none}]
    \addplot[name path=B, domain={-4:4}]{0.7*x};   
\addplot[name path=A, domain={-4:4}]{-0.7*x};

\addplot[fill=black, fill opacity=0.1] fill between [of=A and B];

\draw[blue, very thick] (-1.8,0.95*-1.8)  .. controls (-1.5, -1) ..  (-2,-0.95*-2) node[midway, left]{\small $\gamma_1$};
\draw[blue, very thick] (2.1, 0.95*2.1).. controls (1.7, .5) .. (2,-0.95*2)  node[midway, right]{\small $\gamma_2$};    

\filldraw[black] (0,0) circle (2pt) node[anchor=north]{\scriptsize $Q$};

\coordinate[label=below right: \scriptsize $\tilde K(Q)$] (A) at (-3.5,-2.9*0.8);
\coordinate[label=right: \scriptsize $\boldsymbol -$] (B) at (-3.8,0);
\coordinate[label=left: \scriptsize $\boldsymbol +$] (C) at (3.8,0);
\end{axis}
\end{tikzpicture}
\hspace{.5cm}
        \begin{tikzpicture}[scale=.75]
\begin{axis}[xtick=\empty, ytick=\empty, axis line style={draw=none}, tick style={draw=none}]
    \addplot[name path=B, domain={-4:4}]{0.7*x};   
\addplot[name path=A, domain={-4:4}]{-0.7*x};

\addplot[fill=black, fill opacity=0.1] fill between [of=A and B];

\draw[blue, very thick] (-1.8,0.95*-1.8)  .. controls (-1.5, .9) ..  (-2,-0.95*-2) node[midway, left]{\scriptsize $\partial^s_-R$};
\draw[blue, very thick] (2.1, 0.95*2.1) -- (-2,-0.95*-2) node[midway, above]{\scriptsize $\partial^{ns}R$};
\draw[blue, very thick] (-1.8,0.95*-1.8) -- (2,-0.95*2) node[anchor=north]{\scriptsize $R$};
\draw[blue, very thick] (2.1, 0.95*2.1).. controls (1.7, .9) .. (2,-0.95*2)  node[midway, right]{\scriptsize $\partial^s_+R$};    

\filldraw[black] (0,0) circle (2pt) node[anchor=north]{\scriptsize $Q$};

\coordinate[label=below right: \scriptsize $\tilde K(Q)$] (A) at (-3.5,-2.9*0.8);
\coordinate[label=right: \scriptsize $\boldsymbol -$] (B) at (-3.8,0);
\coordinate[label=left: \scriptsize $\boldsymbol +$] (C) at (3.8,0);
\end{axis}
\end{tikzpicture}
\end{center}

    \caption{The \emb{left} figure depicts two curves $\gamma_1$ and $\gamma_2$ that are centered at $Q$ with respect to a cone $\tilde K(Q)$, and the \emb{right} figure depicts a dynamical rectangle $R$ centered at $Q$ with respect to $\tilde K(Q)$.}
    \label{dynamical rectangle}
\end{figure}

\begin{figure}
    \begin{center}
        \begin{tikzpicture}[scale=1.2]
\begin{axis}[xtick=\empty, ytick=\empty, axis line style={draw=none}, tick style={draw=none}]
\draw[black, very thick] (-3.5,-3.5) -- (3.5,-3.5) -- (3.5,3.5) -- (-3.5,3.5) --  (-3.5,-3.5) node[below]{\scriptsize $\emb{\overline {\mathbb B}_i}$};
\addplot[domain={-3.5:3.5}]{3.5};  
\addplot[domain={-3.5:3.5}]{-3.5};

\addplot[name path=B, domain={-3.5*0.7:3.5*0.7}]{(10/7)*x};   
\addplot[name path=A, domain={-3.5*0.7:3.5*0.7}]{-(10/7)*x}; 

\fill[black, opacity=0.2] (-3.5*0.7, 3.5) -- (3.5*0.7, 3.5) -- (0,0) -- (-3.5*0.7, 3.5);

\fill[black, opacity=0.2] (-3.5*0.7, -3.5) -- (3.5*0.7, -3.5) -- (0,0) -- (-3.5*0.7, -3.5);

\draw[red, dashed] (0.9,0) -- ({3.5*0.7+0.9},3.5) ;
\draw[red, dashed] (0.9,0) -- ({3.5*0.7+0.9},-3.5) ;
\draw[red, dashed] (-0.9,0) -- ({-3.5*0.7-0.9},-3.5) ;
\draw[red, dashed] (-0.9,0) -- ({-3.5*0.7-0.9},3.5) ;

\filldraw[red] (0.9,0) circle (2pt);
\filldraw[red] ({3.5*0.7+0.9},3.5) circle (2pt);
\filldraw[red] ({3.5*0.7+0.9},-3.5) circle (2pt);
\filldraw[red] (-0.9,0) circle (2pt);
\filldraw[red] ({-3.5*0.7-0.9},3.5) circle (2pt);
\filldraw[red] ({-3.5*0.7-0.9},-3.5) circle (2pt);
\filldraw[red] (0,1.286) circle (2pt);
\filldraw[red] (0,-1.286) circle (2pt);
\filldraw[black] (0,0) circle (2pt) node[anchor=west]{\tiny $Q$};

\draw[red, very thick] (-0.9,0) -- (0,1.286) -- (0.9,0) -- (0,-1.286) --  (-0.9,0);
 \fill[red, opacity=0.15] (-0.9,0) -- (0,1.286) -- (0.9,0) -- (0,-1.286) --  (-0.9,0);

 \coordinate[label=below: \tiny $\tilde K^s(Q)$] (A) at (0,3.4);

 \draw[blue, very thick] (-1.5, 3.5) .. controls (-.25, 0) .. (-1.5,-3.5);

 \filldraw[blue] (-1.5, 3.5) circle (2pt);
  \filldraw[blue] (-1.5, -3.5) circle (2pt);

  \coordinate[label= right: \tiny $\partial^s \mathbb B_{\frac{1}{10}}(Q)$] (A) at (-3.5,0);
  \coordinate[label= right: \small \textcolor{blue}{\small $\gamma$}] (B) at (-1.2,-3.1);
\end{axis}

\end{tikzpicture}
\hspace{.5cm}
\begin{tikzpicture}[scale=1.2]
\begin{axis}[xtick=\empty, ytick=\empty, axis line style={draw=none}, tick style={draw=none}]
    \draw[black, very thick] (-3.5,-3.5) -- (3.5,-3.5) -- (3.5,3.5) -- (-3.5,3.5) --  (-3.5,-3.5) node[below]{\scriptsize $\emb{\overline {\mathbb B}_i}$};
 \addplot[name path=C, domain={-3.5:3.5}]{0.95*x};   
\addplot[name path=D, domain={-3.5:3.5}]{-0.95*x};  

 \addplot[name path=B, domain={-3.5:3.5}]{0.7*x};   
\addplot[name path=A, domain={-3.5:3.5}]{-0.7*x};

\addplot[fill=black, fill opacity=0.2] fill between [of=A and B];
 
 \addplot[fill=black, fill opacity=0.1] fill between [of=C and D];
 \coordinate[label=below right: \tiny $\tilde K(Q)$] (A) at (-2.8,-2.8*0.95);
 \coordinate[label=right : \tiny $\tilde K^u(Q)$] (B) at (-3.5, 0);

 \filldraw[black] (0,0) circle (2pt) node[anchor=north]{\scriptsize $Q$};

 \draw[red, very thick] (0,0) -- (1.697, -1.612) -- (2, -1.4) --  (2, 1.4) -- (1.697, 1.612) -- (0,0);
\fill[red, opacity=0.15] (0,0) -- (1.697, -1.612) -- (2, -1.4) --  (2, 1.4) -- (1.697, 1.612) -- (0,0);

 \draw[red, very thick] (0,0) -- (-1.697, -1.612) -- (-2, -1.4) --  (-2, 1.4) -- (-1.697, 1.612) -- (0,0);
 \fill[red, opacity=0.15] (0,0) -- (-1.697, -1.612) -- (-2, -1.4) --  (-2, 1.4) -- (-1.697, 1.612) -- (0,0);

 \draw[red, dashed] (1.697, -1.612) -- (0,-2.8) -- (-1.697, -1.612) ;

 \draw[red, dashed] (1.697, 1.612) -- (0,2.8) -- (-1.697, 1.612) ;

 \filldraw[red] (0,0) circle (2pt);
 \filldraw[red] (-1.697, -1.612) circle (2pt);
\filldraw[red] (-2, -1.4) circle (2pt);
\filldraw[red] (-2, 1.4) circle (2pt);
\filldraw[red] (-1.697, 1.612) circle (2pt);

 \filldraw[red] (1.697, -1.612) circle (2pt);
\filldraw[red] (2, -1.4) circle (2pt);
\filldraw[red] (2, 1.4) circle (2pt);
\filldraw[red] (1.697, 1.612) circle (2pt);

\draw[blue, very thick] (-1.75, 1) .. controls (0, 1.75) .. (1.75,.5) node[midway, below]{\small $\kappa$};

 \filldraw[blue] (-1.75, 1) circle (2pt);
  \filldraw[blue] (1.75, .5) circle (2pt);

\end{axis}
\end{tikzpicture}
   \caption{Illustration of \emb{Proposition \ref{inner constant for box}}. On the top figure, any curve $\gamma$ tangent to $\mathcal K^s$ that intersects the center "diamond" must stay between the dotted lines. On the bottom figure, any curve $\kappa$ tangent to $\mathcal K^u$ with endpoints in the center region contained in $\tilde K(Q)$, must be contained within the dotted lines. So, one can choose a neighborhood $O$ of $Q$ accordingly.}
    \label{inner constant for box figure}

    \end{center}
\end{figure}
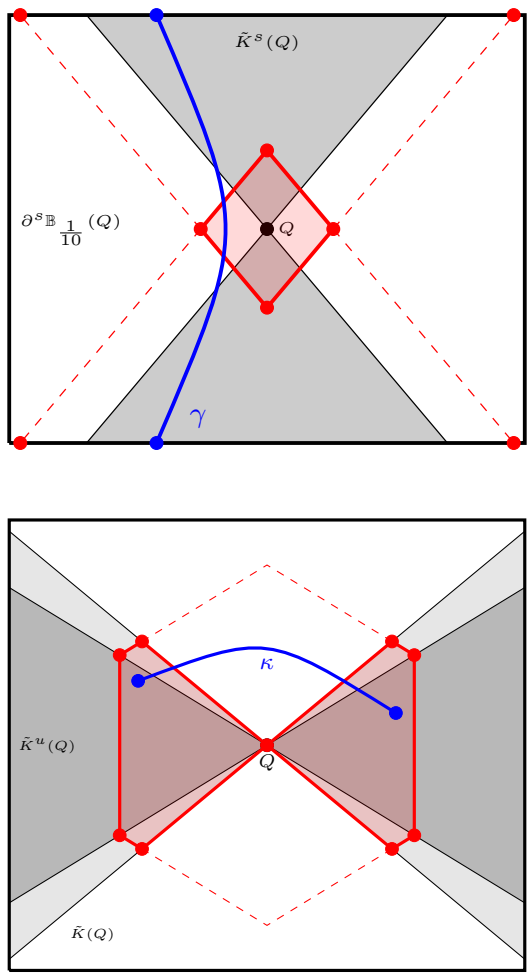
\emb{The next proposition is illustrated by Figure \ref{outer cone centering gives inner ball figure}.}
\emb{\begin{proposition}\label{outer cone centering gives inner ball}
    Suppose $K\supsetneq K^u$ is a cone disjoint from $K^s$, and $R$ is a rectangle centered at $Q$ with respect to $\tilde K(Q)$. Then, there is a neighborhood $O$ of $Q$ such that for any curve $\gamma$ tangent to $\mathcal K^u$, we must have that any component of $\gamma \cap R$ that intersects $O$ must not intersect $\partial^{ns} R$. 
\end{proposition}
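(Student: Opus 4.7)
The key idea will be a barrier argument exploiting the strict inclusion $K^u \subsetneq K$. Because $R$ is centered at $Q$ with respect to $\tilde K(Q)$, the stable boundaries $\partial^s_\pm R$ contain subsegments $L^s_\pm$ that connect opposite slants of the wedges $\tilde K_\pm(Q)$; moreover $Q$ lies in the interior of $R$ (as $R$ is a topological disk with its two stable boundaries lying on opposite sides of $Q$), so the compact sets $L^s_+, L^s_-, \partial^{ns}R$ all lie at positive distance from $Q$. The transversality of $K$ to $K^s$ combined with the strict inclusion $K^u \subsetneq K$ provides a positive angular gap, so I would pick a closed intermediate cone $K'$ with $K^u \subsetneq K' \subsetneq K$, and set $O = B_\delta(Q)$ with $\delta > 0$ small relative to all of these data.

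Next I would fix a curve $\gamma$ tangent to $\mathcal K^u$ through a point $P \in O$, parametrized by arclength with $\gamma(0) = P$, and let $\gamma_0 = \gamma([t_-, t_+])$ be the component of $\gamma \cap R$ containing $P$. The target is $\gamma(t_+) \in \partial^s_+ R$; the argument for $\gamma(t_-) \in \partial^s_- R$ is entirely analogous. Lifting to the universal cover, the convexity of the closed cone $K^u$ yields $\gamma(t) - P \in K^u$ for all $t \geq 0$. Once $|\gamma(t) - P|$ is large relative to $\delta$, the angular gap forces $\gamma(t) - Q = (P - Q) + (\gamma(t) - P) \in K'$, placing $\gamma(t)$ in the wider wedge $\tilde K_+(Q)$. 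A standard intermediate value argument, using that $\gamma$ is a graph $y = f(x)$ with slope in $[-\beta_0, 1 + 2\beta_0]$, that $L^s_+$ is a graph $x = g_+(y)$ with slope controlled by the dual range of $K^s$, and that the endpoints of $L^s_+$ lie on the slant sides of $\tilde K_+(Q)$ with slopes strictly outside $[-\beta_0, 1 + 2\beta_0]$, would force $\gamma$ to cross $L^s_+$ at some $t^* > 0$, whence $t_+ \leq t^*$.

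To conclude $t_+ = t^*$ and hence $\gamma(t_+) \in L^s_+ \subset \partial^s_+ R$, I would rule out $\gamma$ exiting $R$ through $\partial^{ns}R$ before time $t^*$ by observing that the sub-arc $\gamma([0, t^*])$ is confined to a compact region consisting of $B_{2\delta}(Q) \subset R^\circ$ and a piece of the closed wedge $\tilde K_+(Q)$ bounded by $L^s_+$ and short arcs near the slant sides; for $\delta$ chosen small enough relative to $\text{dist}(\partial^{ns}R, Q)$ and to the distance between $L^s_+$ and $\partial^{ns}R$, this region can be made disjoint from $\partial^{ns}R$. The main technical obstacle is precisely this last disjointness claim, since a priori $\partial^{ns}R$ could intrude into $\tilde K_+(Q)$; handling it requires a careful quantitative choice of $\delta$ tailored to the rectangle geometry, whereas the earlier cone and intermediate value arguments are essentially formal consequences of $K^u \subsetneq K$.
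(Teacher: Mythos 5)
Your strategy is essentially the paper's intended one (the paper disposes of this proposition in one line, as "analogous to Proposition \ref{inner constant for box}", with the remark that $O$ depends only on $K$ and $\text{dist}(\partial R, Q)$): trap the component in a translated cone strictly between $K^u$ and $K$, use the sub-arcs of $\partial^s_{\pm}R$ spanning the wedges $\tilde K_{\pm}(Q)$ as barriers, and take $O$ small compared to the cone gap and $\text{dist}(\partial R,Q)$. Up to the intermediate value step your write-up is fine.

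The step you yourself flag as the "main technical obstacle" is, however, a genuine gap, and the remedy you propose cannot close it. You suggest handling a possible intrusion of $\partial^{ns}R$ into $\tilde K_+(Q)$ by "a careful quantitative choice of $\delta$", but no choice of $\delta$ helps there: if a non-stable side really did enter the wedge between $Q$ and $L^s_+$, then a curve tangent to $\mathcal K^u$ passing through $Q$ itself would have its component of $\gamma\cap R$ terminate on that intruding arc, so it would meet every neighborhood $O$ of $Q$ and also $\partial^{ns}R$, and the statement would simply be false. The point is not quantitative but structural: one must argue that intrusion does not occur for the rectangles considered. Concretely, the trapped component lies in $P+K^u$ with $P\in O$, hence stays a definite angle inside the wedge, bounded away from the slant sides of $\tilde K(Q)$ outside a small ball; on the other hand the non-stable sides meet the wedge only at or beyond its slant boundary, since their endpoints are the corners of $R$, which sit on the stable arcs at the edge of (or outside) $\tilde K(Q)$ — this is exactly the configuration of Figure \ref{outer cone centering gives inner ball figure}, and in every application the paper even takes $\partial^{ns}R$ to be straight segments with endpoints in $\partial\tilde K(Q)$ (property (4) of Lemma \ref{inner rectangle construction}, and the requirements preceding (\ref{fixed inner ball radius})). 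With that observation, the region swept by $P+K^u$, $P\in O$, meets $\partial R$ only in the stable sub-arcs, so the component must cross $L^s_\pm$ before it can reach anything else, which is the desired conclusion. Your proof needs this structural input made explicit (the paper's one-line proof glosses over the same point); presenting it instead as a choice of $\delta$ leaves the argument incomplete precisely where the content of the proposition lies.
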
}

\begin{proof}
    \emb{The proof is analogous to that of Proposition \ref{inner constant for box}}.
\end{proof}
\emb{It should be noted that the size of the neighborhood $O$ in this proposition depends only on the cone $K$ and $\text{dist}(\partial R,Q)$.}

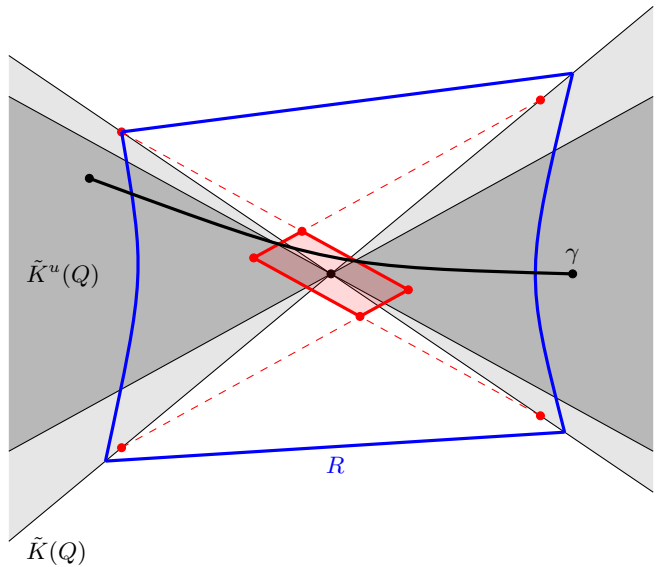
\begin{figure}
    \begin{center}
        \begin{tikzpicture}[scale=1.5]
\begin{axis}[xtick=\empty, ytick=\empty, axis line style={draw=none}, tick style={draw=none}]
    \addplot[name path=B, domain={-2:2}]{0.65*x};   
\addplot[name path=A, domain={-2:2}]{-0.65*x};  

 \addplot[name path=C, domain={-2:2}]{0.98*x};   
\addplot[name path=D, domain={-2:2}]{-0.8*x};  
       
\addplot[fill=black, fill opacity=0.2] fill between [of=A and B];

 \addplot[fill=black, fill opacity=0.1] fill between [of=C and D];

\filldraw[black] (0,0) circle (1pt);
\filldraw[red] (-1.3,0.98*-1.3) circle (1pt);
\filldraw[red] (1.3,0.98*1.3) circle (1pt);
\filldraw[red] (-1.3,-.8*-1.3) circle (1pt);
\filldraw[red] (1.3,-.8*1.3) circle (1pt);

\filldraw[red] (-0.18,0.312) circle (1pt);
\filldraw[red] (0.18,-0.312) circle (1pt);
\filldraw[red] (0.48,-0.117) circle (1pt);
\filldraw[red] (-0.48,0.117) circle (1pt);

 \draw[red, dashed] (-1.3,0.98*-1.3) -- (0.18,-0.312) ;
 \draw[red, dashed] (1.3,-.8*1.3) -- (0.18,-0.312) ;

 \draw[red, dashed] (-1.3,-.8*-1.3) -- (-0.18,0.312) ;

 \draw[red, dashed] (1.3,0.98*1.3) -- (-0.18,0.312) ;

 \draw[red, very thick]  (-0.18,0.312) -- (-0.48,0.117) -- (0.18,-0.312) -- (0.48,-0.117) --  (-0.18,0.312);

  \fill[red, opacity=0.15] (-0.18,0.312) -- (-0.48,0.117) -- (0.18,-0.312) -- (0.48,-0.117) --  (-0.18,0.312);

  \draw[blue, very thick] (-1.4,0.98*-1.4) .. controls (-1.15, 0) .. (-1.3,-.8*-1.3) ;
\draw[blue, very thick] (1.5,0.98*1.5) -- (-1.3,-.8*-1.3) ;

\draw[blue, very thick] (-1.4,0.98*-1.4) -- (1.45,-.8*1.45) node[midway, below]{\small $R$};

\draw[blue, very thick] (1.45,-.8*1.45) .. controls (1.2, 0) .. (1.5,0.98*1.5);

\coordinate[label=below right: \small $\tilde K(Q)$] (A) at (-1.95,-1.9*0.98);
 \coordinate[label=right: \small $\tilde K^u(Q)$] (B) at (-1.95,0);
\draw[black, very thick] (-1.5,.7) .. controls (0, 0.05) .. (1.5,0) node[above]{\small $\gamma$};

\filldraw[black] (-1.5,.7) circle (1pt);

\filldraw[black] (1.5,0) circle (1pt);
 
\end{axis}

\end{tikzpicture}

\caption{Illustration of \emb{Proposition} \ref{outer cone centering gives inner ball}. The rectangle $R$ is centered at $Q$ with respect to $\tilde K(Q)$ and has the property that for any curve $\gamma$ that is tangent to $\mathcal K^u$, any component of $\gamma\cap R$ that intersects the center quadrilateral must stay within the dotted lines and hence not intersect $\partial^{ns} R$.}
    \label{outer cone centering gives inner ball figure}

    \end{center}
\end{figure}

\section{Main Estimates and Non-Stationary Trace Map Dynamics}\label{Main Estimates and Non-Stationary Trace Map Dynamics}
\emb{We will now translate Theorem \ref{first intermediate theorem} into a problem concerning a sequence of Anosov maps of $\mathbb T^2$.} \emb{For the rest of this section}, let us consider an arbitrary finite set $\mathcal A\subset \mathbb N$ and an arbitrary sequence $\overline{a}:=(a_n)$ such that $a_n\in\mathcal A$ for all $n\in\mathbb N$. 

\emb{Recalling Lemma \ref{connection}, given $\rho>0$, we set $\hat L_{\lambda}\subset \mathbb T^2\setminus \tilde O_{r(\rho)}$ to be the curve given by $F_{\lambda}^{-1}\left( L_\lambda\setminus O_{\rho}\right)$.} \emb{Now, for appropriate range of $\lambda$, we define
    $$C(\lambda, O) :=\{P\in \hat L_{\lambda} \setminus O : \tilde T_{\lambda, a_n}\circ\cdots\circ \tilde T_{\lambda, a_1}(P)\not\in O \ \text{for all} \ n>0 \}.$$}
   \emb{Theorem \ref{first intermediate theorem} will follow from}
\emb{\begin{theorem}\label{intermediate theorem}
    For each $M>0$, there is a $\rho>0$ and $\lambda_0\in\left(0, \min_{a\in\mathcal A}\left\{\tilde \lambda(\rho, a)\right\}\right)$, such that 
    \begin{equation}\label{main estimate equation}
        \text{dim}_H\left(\mathcal C\left(\lambda, \tilde O_{r(\rho)}\right)\right)\geq \frac{\log 2}{\log\left(2+\frac{1}{M}\right)}
    \end{equation}
    for all $\lambda\in[0,\lambda_0]$.
\end{theorem}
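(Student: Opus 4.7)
The plan is to build a Cantor subset of $\mathcal C(\lambda, \tilde O_{r(\rho)})$ inside $\hat L_\lambda$ via a non-stationary Moran-type construction, and estimate its Hausdorff dimension from below. Given $M$, I would choose $\rho$ small enough (and $\lambda_0$ correspondingly) so that, by Lemma \ref{connection}, every $\tilde T_{\lambda, a}$ with $a \in \mathcal A$ is an arbitrarily small $C^2$-perturbation of $\hat T_{0,a}$ off $\tilde O_{r(\rho)}$. For each $i$, construct a dynamical rectangle $R_i \subset \mathbb B_i$ centered at $Q_i$ (in the sense of Section \ref{rectangle section}) whose stable boundaries $\partial^s R_i$ are segments of non-stationary local stable manifolds for the sequence $(\tilde T_{\lambda,a_n})$; their existence and tangency to $\mathcal K^s$ follow from a non-stationary stable manifold theorem applicable to small $C^2$-perturbations of the uniformly hyperbolic linear family $\{A_a\}_{a\in\mathcal A}$, combined with the invariance of $K^s$ guaranteed by Lemma \ref{linear cone condition} and the bounded distortion estimates of \cite{BM}.

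The engine of the argument is a blocking construction. I would partition $\mathbb N$ into consecutive blocks $B_1, B_2, \ldots$ so that each composed block map $G_j := \tilde T_{\lambda,a_{\max B_j}} \circ \cdots \circ \tilde T_{\lambda,a_{\min B_j}}$ has effective unstable expansion in a controlled range whose upper bound, after taking $M$ large and $\rho$ small, is at most $2 + 1/M$; such a partition exists because $\mathcal A$ is finite and each $\mu^u(a) \geq (1+\sqrt 5)/2 > 1$. By the perturbative form of Lemma \ref{linear overflow}, applied iteratively along the block, the $G_j$-image of any rectangle $R_i$ overflows $\text{Box}_{1}(0)$ at least twice in the unstable direction, so one can select two disjoint sub-rectangles $R_i^{(1)}, R_i^{(2)} \subset R_i$ whose $G_j$-images are full stable strips across some $R_{\ell_1}, R_{\ell_2}$, each of unstable width at most $(2+1/M)^{-1}\,\mathrm{width}(R_i)$. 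Taking $\rho$ small enough forces the intermediate $\tilde T_{\lambda,a_n}$-iterates of points in $R_i^{(k)}$ to miss $\tilde O_{r(\rho)}$ throughout the block, by Propositions \ref{inner constant for box} and \ref{outer cone centering gives inner ball}, which guarantee that curves tangent to $\mathcal K^u$ whose endpoints lie near $Q_i$ in the correct cone configuration cannot leave $\mathbb B_i$ across the non-stable boundaries.

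Iterating the block construction produces a tree whose depth-$N$ level contains at least $2^N$ surviving sub-rectangles, each of unstable width at most $(2+1/M)^{-N}$ times the initial rectangle's width. The curve $\hat L_\lambda$ is a small $C^1$-perturbation of $F_0^{-1}(L_0 \setminus O_{\rho})$, whose tangent direction is uniformly transverse to $\mathcal K^s$ (by Lemma \ref{estimate for irrational lines} and Remark \ref{remark for estimate or irrational lines} applied to the preimages under the non-stationary iteration), so intersecting the tree with $\hat L_\lambda$ yields at each depth at least $2^N$ disjoint intervals of length comparable to the unstable widths. The nested intersection lies in $\mathcal C(\lambda, \tilde O_{r(\rho)})$, and the standard Moran-type lower bound gives $\dim_H \geq \log 2 / \log(2 + 1/M)$. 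The chief obstacle is the non-stationarity: classical Markov-partition arguments rely on stationarity, so one must combine the non-stationary stable manifold construction with the bounded distortion estimate of \cite{BM} (to pin down the actual widths against the heuristic $\prod \mu^u(a_k)^{-1}$), and with careful use of the centering propositions of Section \ref{rectangle section} to verify that the selected sub-rectangles remain genuine dynamical rectangles throughout iteration and uniformly in $\lambda \in [0, \lambda_0]$.
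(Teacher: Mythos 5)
Your proposal has a genuine structural gap: it tries to realize the bound $\log 2/\log\left(2+\tfrac{1}{M}\right)$ as a two-branch Moran construction with contraction ratio $\left(2+\tfrac{1}{M}\right)^{-1}$, and the blocking step that this requires cannot be carried out. You ask for blocks $B_j$ whose composed maps have unstable expansion bounded above by $2+\tfrac{1}{M}$, justified only by finiteness of $\mathcal A$ and $\mu^u(a)\geq \tfrac{1+\sqrt 5}{2}$. But the per-step expansions are quantized: a single step with $a=1$ expands by about $1.618<2$, while any block containing a letter $a\geq 2$ or at least two letters expands by at least $\min\left\{1+\sqrt 2,\ \left(\tfrac{1+\sqrt5}{2}\right)^2\right\}\approx 2.41$, which already exceeds $2+\tfrac{1}{M}$ once $M$ is large. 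So no partition of $\mathbb N$ into blocks with expansion in the window $\left[2,\,2+\tfrac{1}{M}\right]$ exists, and without it your selection of exactly two children per block, each mapping fully across a target rectangle of comparable width, cannot be arranged. There is also a direction error: for a Moran-type lower bound you need each child to have relative width \emph{at least} $\left(2+\tfrac{1}{M}\right)^{-1}$ (plus uniform separation), whereas you only assert an upper bound on the widths, which by itself gives no lower bound on dimension.

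The deeper point is that in the paper the quantity $2+\tfrac{1}{M}$ has nothing to do with branching numbers or expansion rates; it enters only through the thickness inequality (\ref{Thickness and Hausdorff dimension}). The Cantor set is obtained by deleting from $\hat L_\lambda$ the preimages of the small rectangles $R'_m(\lambda,Q_i,N^*)$ of Lemma \ref{inner rectangle construction}, whose unstable width is made tiny (of order $(\mu_1-\delta)^{-N^*}C^*$, see (\ref{N large enough}) and (\ref{Rectangle not too big})), while the adjacent bridges have definite size of order $C^{**}$ at the moment of return to $\mathbb B_i$; the bounded distortion estimate (Lemma \ref{bounded distortion}) transfers the gap-to-bridge ratio back to $\hat L_\lambda$, giving thickness at least $M$. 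Crucially, the definite bridge size comes from anchoring the stable boundaries of all rectangles to the non-stationary stable manifolds through the orbit of $Q_0=\left(\tfrac14,\tfrac14\right)$ (Lemma \ref{common orbit}, Theorem \ref{stable manifold theorem}), whose distance from each $Q_i$ is controlled by the continued-fraction input of Lemmas \ref{upper constant and length} and \ref{lower constant and length}; your construction never secures such lower bounds on distances of gap endpoints from the $Q_i$ under all future iterates, which is exactly where the bounded-type hypothesis is used and what makes both the survival property and the gap/bridge estimate work. As written, the proposal would need to be rebuilt around a thickness (or comparable gap-control) argument rather than a branch-counting one.
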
}

\emb{The proof of this theorem will be developed throughout Section \ref{Main Estimates and Non-Stationary Trace Map Dynamics} and will be concluded in Subsection \ref{concluding the proof of intermediate theorem}. Let us fix $M>0$.}

\subsection{\emb{Outline of Proof of Theorem \ref{intermediate theorem}}}
\emb{We outline the proof of the argument to come}

\begin{itemize}
    \item \emb{In Subsection \ref{Development of Constants}, we introduce a series of constants that will be used throughout the proof of this theorem, and these constants will determine $\rho$, and hence the size of the neighborhoods that make up $\tilde O_{r(\rho)}$.}
    
    \item \emb{Subsection \ref{Translation to a Toral Map Problem} is dedicated to choosing $\rho$ explicitly, and once chosen, in Subsection \ref{extended torus map}, we extend each map $\tilde T_{\lambda,a}$, for a small range of $\lambda$ and $a\in\mathcal A$, to a toral Anosov map, and these maps will satisfy a common cone condition.}

    \item \emb{In Subsection \ref{Properties of the Extended Toral Maps}, we then derive useful dynamical properties of these maps such as a bounded distortion estimate, and a non-stationary version of the Stable Manifold Theorem for these maps.}

    \item \emb{In Subsection \ref{Particular Rectangle Construction}, we construct families of sequences of dynamical rectangles containing each component of $\tilde O_{r(\rho)}$, that will be useful in controlling the dynamics of our maps near this open set. The stable sides of these rectangles will be composed of the stable manifolds constructed in Subsection \ref{Properties of the Extended Toral Maps}.}

    \item \emb{Lastly, in Subsection \ref{thickness estimate section}, for each $\lambda$ sufficiently near $0$, we construct a subset of $\mathcal C\left(\lambda,\tilde O_{r(\rho)}\right)$, that is roughly composed of the points whose orbits do not enter the interior of the constructed rectangles from Subsection \ref{Particular Rectangle Construction}. The proof is concluded in Subsection \ref{concluding the proof of intermediate theorem} by showing that this subset has large thickness (see Subsection \ref{Thickness of Cantor Sets Section} for a formal definition), and in particular, Hausdorff dimension greater than $\frac{\log 2}{\log\left(2+\frac{1}{M}\right)}$.}
\end{itemize}

\subsection{Development of Constants}\label{Development of Constants}

    We define a series of constants needed in the proof. Their significance will become clear as the proof progresses. 
    
Recalling (\ref{main cone and line}), we fix a cone $K:=K_{\beta_1}^u$, for some $\beta_1\in(\beta_0,0.1)$, so that it strictly contains $K^u$ and is transversal to $K^s$. We also choose $\beta_1$ close enough to $\beta_0$ so that the property described in \emb{Lemma} \ref{linear overflow} holds for the cone $K$. If $K(x,y)$ denotes the cone $K$ in the tangent space $T_{(x,y)}\mathbb T^2$, then we denote $\mathcal K:=\left\{K(x,y)\right\}_{(x,y)\in\mathbb{T}^2}$.\\
\\
We set
\begin{equation}\label{upper constant}
    C^*:=\frac{C^*_0}{C_1},
\end{equation}
where the constants $C_1$ and $C^*_0$ are taken from \emb{Lemma \ref{curve length compared to distance} and Proposition \ref{inner constant for box}}, respectively. Referring to Lemma \ref{linear cone condition}, we set 
$$\mu_1:=\overline \mu_{\beta_0}, \ \text{and} \  \mu_2:=\max_{n}\left\{\mu^u(a_n)\right\}.$$
    Furthermore, choose a $\delta>0$ such that 
    $$\mu_1-\delta>1,$$
   and choose $C_2>0$ such that  $$\|\hat T_{0,a}\|_{C^2}\leq C_2$$ for all $a\in\mathcal A$.
    
 \emb{\begin{definition}
    We say that a $C^1$ diffeomorphism $T:\mathbb T^2\rightarrow \mathbb T^2$, satisfies \textit{Property (C)} if for each $(x,y)\in \mathbb T^2$, we have 
    \begin{itemize}
    \item $D_{(x,y)}T^{-1}\left(K^s(x,y)\right)\subset \text{int}K^s\left( T(x,y) \right)$ and $D_{(x,y)} T\left(K^{u}(x,y)\right)\subset \text{int}K^u\left(T(x,y)\right)$
    \item $\left(\mu_1-\delta\right)|v|\leq \left|D_{(x,y)}T^{-1}v\right|\leq \left(\mu_2+\delta\right)|v|$ for $v\in K^s(x,y)$
    \item $\left(\mu_1-\delta\right)|v|\leq \left|D_{(x,y)}Tv\right|\leq \left(\mu_2+\delta\right)|v|$ for $v\in K^u(x,y)$
\end{itemize}
\end{definition}}

Certainly, all of our maps $\left\{\hat T_{0,a_n}\right\}$ satisfy Property (C) by Lemma \ref{linear cone condition}. We set 
\begin{equation}\label{geometric sum}
   C_3:=\sum_{n=-1}^{\infty}\frac{1}{(\mu_1-\delta)^{-(n+1)}}.
\end{equation}

\emb{\begin{lemma}\label{maximal angles}
There is a $\Theta>0$ such that the following holds: If $(T_n)$ is a sequence of maps satisfying Property (C) and $\sup_{n}\|T_n\|_{C^2}<C_2+1$, and if $\gamma$ is a smooth curve tangent to $\mathcal K^u$ so that $$\sum_{i=0}^{\emb{m}}\text{length}\left(T_i\circ\cdots \circ T_1(\gamma)\right)<C_1C_3\frac{\sqrt{2}}{10}$$ for \emb{some $m\geq 0$}, then $$\sum_{i=0}^{\emb{m}} \text{Ang}\left(T_{i}\circ\dots\circ T_1(\gamma) \right)<\Theta,$$ where $\text{Ang}(\kappa)$ denotes the supremum of the angles between tangent vectors of $\kappa$. 
\end{lemma}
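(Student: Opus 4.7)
The plan is to establish the linear recursion
\[
\text{Ang}(\gamma_{i+1}) \;\leq\; \eta\,\text{Ang}(\gamma_i) \;+\; C\,\text{length}(\gamma_i),
\]
where $\gamma_i := T_i \circ \cdots \circ T_1(\gamma)$ (with $\gamma_0 := \gamma$), and $\eta \in (0,1)$, $C > 0$ are universal constants depending only on $\mu_1, \mu_2, \delta, C_2$, and the cone $K^u$. Granted such a recursion, unrolling gives
\[
\text{Ang}(\gamma_i) \;\leq\; \eta^i\,\text{Ang}(\gamma_0) + C\sum_{j=0}^{i-1}\eta^{i-1-j}\,\text{length}(\gamma_j),
\]
and switching the order of summation in $\sum_{i=0}^m \text{Ang}(\gamma_i)$ yields
\[
\sum_{i=0}^m \text{Ang}(\gamma_i) \;\leq\; \frac{\text{Ang}(\gamma_0)}{1-\eta} + \frac{C}{1-\eta}\sum_{j=0}^m \text{length}(\gamma_j).
\]
Since $\gamma$ is smooth and tangent to $\mathcal{K}^u$, its tangent direction varies continuously within one half of the bicone $K^u$, so $\text{Ang}(\gamma_0) \leq \theta_0(\beta_0)$. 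Combined with the hypothesis $\sum_j \text{length}(\gamma_j) < C_1 C_3 \tfrac{\sqrt{2}}{10}$, this produces a universal bound $\Theta$.

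To derive the recursion, I would fix $i$ and choose points $P_1, P_2 \in \gamma_i$ with unit tangent vectors $\tau_1, \tau_2 \in K^u$. The unit tangents of $\gamma_{i+1}$ at $T_{i+1}(P_j)$ are the normalizations $\widehat{DT_{i+1}(P_j)\tau_j}$. By the triangle inequality, the angle between them is at most
\[
\angle\bigl(\widehat{DT_{i+1}(P_1)\tau_1},\widehat{DT_{i+1}(P_1)\tau_2}\bigr) + \angle\bigl(\widehat{DT_{i+1}(P_1)\tau_2},\widehat{DT_{i+1}(P_2)\tau_2}\bigr).
\]
The first summand records the contraction of the linear map $DT_{i+1}(P_1)$ acting projectively on $K^u$; taking the supremum over $P_1, P_2$ gives $\leq \eta\,\text{Ang}(\gamma_i)$. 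The second summand records how $DT_{i+1}$ varies as the base point moves along $\gamma_i$: using $\|T_{i+1}\|_{C^2} \leq C_2 + 1$ and the lower bound $|DT_{i+1}(P_2)\tau_2| \geq \mu_1 - \delta$ from Property~(C), this term is bounded by $\tfrac{C_2+1}{\mu_1-\delta}|P_1 - P_2| \leq \tfrac{C_2+1}{\mu_1-\delta}\,\text{length}(\gamma_i)$, which sets $C := \tfrac{C_2+1}{\mu_1-\delta}$.

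The main obstacle is extracting the uniform angular contraction factor $\eta < 1$ purely from Property~(C) and the $C^2$ bound. Property~(C) furnishes strict cone invariance and expansion rates bounded between $\mu_1-\delta$ and $\mu_2+\delta$ on $K^u$, but the rate at which $DT_P$ contracts the Hilbert (equivalently, angular) metric on the projective cone $K^u$ is additional data quantifying how strictly $DT_P(K^u)$ sits inside $K^u$. To obtain this uniformly, I would combine strict cone invariance with the two-sided expansion bounds to force $DT_P(K^u)$ to lie at a definite Hilbert distance from $\partial K^u$ inside $K^u$, uniformly in $P$ and in the particular $T$ satisfying the hypotheses, and then invoke Birkhoff's theorem on contraction of the Hilbert metric by a cone-preserving positive map to conclude the existence of $\eta \in (0,1)$. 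With this angle-contraction input in hand, the remainder of the argument is routine bookkeeping of geometric sums.
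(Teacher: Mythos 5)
Your bookkeeping is fine: the recursion $\text{Ang}(\gamma_{i+1})\leq \eta\,\text{Ang}(\gamma_i)+C\,\text{length}(\gamma_i)$, the bound $\text{Ang}(\gamma_0)\leq\theta_0(\beta_0)$ for a connected curve tangent to $\mathcal K^u$, the estimate of the base-point–variation term by $\frac{C_2+1}{\mu_1-\delta}\,\text{length}(\gamma_i)$, and the geometric-series unrolling would all go through \emph{if} a uniform angular contraction factor $\eta<1$ existed. But that step, which you yourself identify as the main obstacle, is a genuine gap, and the fix you propose does not work. Property (C) only gives the unquantified strict inclusion $D_PT\left(K^u(P)\right)\subset \text{int}\,K^u\left(T(P)\right)$ together with two-sided norm bounds; this does \emph{not} force $D_PT(K^u)$ to lie at a definite Hilbert distance from $\partial K^u$. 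For instance, a hyperbolic linear map whose unstable eigendirection sits at angular distance $\varepsilon$ from $\partial K^u$, with eigenvalues roughly $3$ and $+\tfrac12$, satisfies every inequality in Property (C) (it is admissible as soon as $\mathcal A$ contains a letter $\geq 3$, so that $\mu_2+\delta\geq 3.3$) and yet maps $K^u$ into a cone that comes within $O(\varepsilon)$ of $\partial K^u$; for such maps the Birkhoff coefficient $\tanh(\Delta/4)$ tends to $1$ as $\varepsilon\to 0$, so no uniform $\eta<1$ can be extracted from cone invariance plus the expansion bounds alone. (A secondary issue of the same kind: even granted Hilbert-metric contraction, converting it into contraction of the Euclidean angle is not uniform near $\partial K^u$.) Without $\eta<1$ your recursion only gives $\text{Ang}(\gamma_i)\leq \text{Ang}(\gamma_0)+C\sum_{j<i}\text{length}(\gamma_j)$, whose sum over $0\leq i\leq m$ grows linearly in $m$, and the lemma does not follow.

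For comparison, the paper's proof sidesteps projective contraction entirely: from Property (C) and the uniform $C^2$ bounds it asserts a uniform bound on the curvature of each iterated curve $T_i\circ\cdots\circ T_1(\gamma)$, which yields $\text{Ang}\left(T_i\circ\cdots\circ T_1(\gamma)\right)\leq \Theta'\,\text{length}\left(T_i\circ\cdots\circ T_1(\gamma)\right)$ for each $i$, and then the hypothesis on $\sum_i\text{length}$ finishes the argument with no contraction factor needed. If you want to keep your scheme, the uniform $\eta<1$ should come not from Property (C) in the abstract but from the extra information available where the lemma is applied, namely that the maps are $C^2$-close to the determinant-$\pm1$ linear maps $A_a$: for a linear map $L$ the projective derivative at a direction $v$ is $|\det L|/|Lv|^2$, so with $|\det D_PT|$ close to $1$ and $|D_PTv|\geq \mu_1-\delta$ on $K^u$ one gets $\eta\approx(\mu_1-\delta)^{-2}<1$. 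As written, however, your proposal invokes only Property (C) and the $C^2$ bound, and the key contraction step is unjustified.
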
}
\begin{proof}
    \emb{Let $\gamma$ be a curve tangent to $\mathcal K^u$. From Property (C) and since the $C^2$ norms of each $T_n$ are uniformly bounded, the curvature of each curve $T_i\circ\dots\circ T_1(\gamma)$, $1\leq i\leq m$, is uniformly bounded. Hence, there is a constant $\Theta'>0$, that only depends on $C_2$ and the cones and constants in Property (C), such that 
    $$\text{Ang}\left(T_i\circ\dots\circ T_1(\gamma) \right)\leq \Theta'\text{length}\left(T_i\circ\dots\circ T_1(\gamma) \right)$$
    for each $1\leq i\leq m$. The lemma follows.}
\end{proof}
Let us define 
\begin{equation}\label{bounded distortion constant}
    C_4 :=\exp{\left[(C_2+1)^2\left(\Theta +C_1C_3\frac{\sqrt{2}}{10}\right)\right]}.
\end{equation}

\emb{Let us set $Q_0=\left(\frac{1}{4}, \frac{1}{4}\right)$}. By Lemma \ref{estimate for irrational lines} \emb{and Proposition \ref{inner constant for box}} we have 

    \begin{lemma}\label{upper constant and length}
        There is a $l^*>0$ such that if $L$ is a line through $Q_0$ of slope $-\frac{1}{[b_1, b_2,\dots]}$ with $b_k\in \mathcal A$, and $\text{length}(L)\geq l^*$, then for each $i=1,\dots, 4$, there are segments $L^+_i, \ L^-_i\subset L$ contained in $
        \mathbb B_i\setminus \partial^s\mathbb{B}_i$ such that 
    \begin{itemize}
        \item[1.)] $L^{\pm}_{i}$ connects the opposite lines that make up $\partial ^u\mathbb{B}_i$ and are centered at $Q_i$ with respect to $\tilde K(Q_i)$, where $L^{\pm}_{i}\cap \tilde K_{\pm}(Q_i)\neq \emptyset$
        \item[2.)] For any $P\in\left(L^{+}_{i}\cup L^{-}_{i}\right)\cap \tilde K(Q_i)$, we have $|P-Q_i|<\cos(\theta)\frac{C^*}{4}$
    \end{itemize}
where $\theta$ is the angle between boundary lines of $\tilde K(Q_i)$ (\emb{explicitly, $\theta=\theta_0(\beta_1)$, see discussion before (\ref{main cone and line})}).
    \end{lemma}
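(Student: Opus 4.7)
The strategy is to invoke Lemma \ref{estimate for irrational lines} (as modified by Remark \ref{remark for estimate or irrational lines}, since our lines carry slopes of the form $-1/[b_1,b_2,\dots]$) with an application constant $C^*_{\text{apply}}$ chosen small enough to force condition (2). Specifically, for any line tangent to $\mathcal K^s$ whose signed $V$-offset from $Q_i$ has magnitude at most $d$, an elementary trigonometric calculation on the cone of half-angle $\theta/2$ shows that every point of the line lying in $\tilde K(Q_i)$ satisfies $|P-Q_i|\le d/\cos(\theta/2)$. Thus, choosing $C^*_{\text{apply}}\le \cos(\theta/2)\cos(\theta)\,C^*/4$ gives condition (2); further requiring $C^*_{\text{apply}}\le C^*_0$ will mean Proposition \ref{inner constant for box}(1) automatically forces any such close crossing into $\mathbb B_i\setminus \partial^s\mathbb B_i$.

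The main obstacle is that Lemma \ref{estimate for irrational lines} produces only a single close approach of $L$ to $Q_i$, whereas we need two such approaches, one on each $V$-side of $Q_i$. I plan to extract the needed two-sided information from the proof of Lemma \ref{estimate for irrational lines}: for $n=n(C^*_{\text{apply}},\mathcal A)$ supplied by Proposition \ref{Circle rotation}, the orbit $\{k\alpha\bmod 1:1\le k\le n\}$ is $C^*_{\text{apply}}$-dense on $S^1$ uniformly in $\alpha=[a_1,a_2,\dots]$ with $a_i\in\mathcal A$. Applying this density at the two targets $y_i\pm 2C^*_{\text{apply}}$ (where $y_i$ is the $V$-coordinate of $Q_i$) yields two distinct intersections of $L$ with the line through $Q_i$ parallel to $V^\perp$, lying on opposite $V$-sides of $Q_i$, each at $V$-offset at most $3C^*_{\text{apply}}$; this constant factor can be absorbed by the initial choice of $C^*_{\text{apply}}$.

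Since $\mathcal A$ is a fixed finite set of positive integers, $\alpha=[b_1,b_2,\dots]$ with $b_k\in\mathcal A$ lies in a compact subset of $(0,1)$ bounded away from the endpoints, so the slopes $-1/\alpha$ form a compact subregion of $K^s$ bounded away from $\mathcal K^u$. Consequently each crossing of $\mathbb B_i$ by $L$ must enter through one line of $\partial^u\mathbb B_i$ and exit through the opposite line, yielding condition (1). Labeling the two crossings above as $L^+_i$ (on the $+V$-side) and $L^-_i$ (on the $-V$-side), each $L^\pm_i$ is entirely contained in the corresponding $\pm V$-half of $\mathbb B_i$, so its intersection with $\tilde K(Q_i)$ meets only $\tilde K_\pm(Q_i)$ and avoids the opposite sector, which is precisely the centered condition relative to $\tilde K(Q_i)$. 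The remaining bookkeeping (quantifying $l^*$ in terms of $\mathcal A,\theta,C^*,C^*_0$) is routine.
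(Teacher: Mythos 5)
Your proposal is correct and follows essentially the same route as the paper: Proposition \ref{Circle rotation} (via Lemma \ref{estimate for irrational lines} and Remark \ref{remark for estimate or irrational lines}) supplies close approaches of $L$ to $Q_i$, a cosine estimate converts the transversal offset into the bound $|P-Q_i|<\cos(\theta)\frac{C^*}{4}$ on $L\cap\tilde K(Q_i)$, and Proposition \ref{inner constant for box}(1) keeps the maximal crossing segments off $\partial^s\mathbb B_i$ so they connect the two components of $\partial^u\mathbb B_i$; your explicit two-target argument for getting one approach on each side of $Q_i$ is a welcome elaboration of a step the paper dispatches with ``with $+$ replaced by $-$.'' One minor slip to fix: the intersections should be taken with a standard (closed, rational-sloped) vertical or horizontal circle through $Q_i$, as in the proof of Lemma \ref{estimate for irrational lines}, rather than with the leaf through $Q_i$ parallel to $V^{\perp}$, which is generally not closed on $\mathbb T^2$ and so does not carry the rotation-orbit structure you invoke.
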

    
\begin{proof}
    \emb{Let $i\in\{1,2,3,4\}$. By Lemma \ref{estimate for irrational lines}, we can choose a $l^*>0$ such that if $L$ is a line through $Q_0$ of slope $\frac{-1}{[b_1,b_2,\dots]}$, $b_i\in \mathcal A$, and $\text{length}(L)\geq l^*$, then there is a segment $L'_i\subset L$ such that
    \begin{itemize}
        \item $L'_i\subset \tilde K_{+}(Q_i)$ and connects the boundaries of $\tilde K_{+}(Q_i)$
        \item For each $P\in L_i'$, we have $|P-Q_i|\leq \min\left\{C_0^*, \ \cos(\theta)\frac{C^*}{4}\right\}.$
    \end{itemize}
    By increasing $l^*$ if needed, we can even assume that $L'_i$ connects the boundary lines of $\tilde K_{+}(Q_i)$.\\
    \\
    Let $L$ be such a line and let $L^+_i$ be the longest segment of $L\cap \overline{\mathbb B}_i$ that contains $L'_i$.
    Then, by Proposition \ref{inner constant for box}, since 
    $$\text{dist}\left(L_i^+, Q_i\right)<C_0^*,$$
    we have that $L_i^+\cap \partial^s\mathbb B_i=\emptyset.$ By increasing $l^*$ again if needed, we can also ensure that $L_i^+$ connects the components of $\partial^u\mathbb B_i$ and Proposition \ref{inner constant for box} will still guarantee that this extended segment is disjoint from $\partial^s\mathbb B_i$. By repeating this argument for each $i$ and with $+$ replaced with $-$, the lemma is proved.}
\end{proof}

Let us choose a $\tilde n$ so that $\frac{\left(\mu_1-\delta\right)^{\tilde n}}{10}>l^*$ and set $l^{**}:=\left(\mu_2+\delta\right)^{\tilde n+1}$. \emb{For $m\geq 1$, denote $$\text{Orb}_0\left(Q_0\right):=\left\{\hat T_{0,a_{n+m}}\circ\cdots\circ \hat T_{0,a_m}\left(Q_0\right)\right\}_{n\geq 0}\cup \left\{Q_0\right\}.$$
It is a calculation to check that $\text{Orb}_0\left(Q_0\right):=\bigcup_{m\geq 0} \text{Orb}_0\left(Q_0\right)$ is a finite set and has distance from each $Q_i$ that is at least $\frac{1}{4}$.} Again by Lemma \ref{estimate for irrational lines}, we have 

\begin{lemma}\label{lower constant and length}
    There is a $C^{**}\in\left(0, \frac{1}{4}-\frac{1}{10}\right)$ such that if $L$ is a line through \emb{$Q$, for some $Q\in \text{Orb}_0\left(Q_0\right)$}, of slope $-\frac{1}{[b_1, b_2,\dots]}$ with $b_k\in \mathcal A$, and $\text{length}(L)\leq l^{**}$, then 

\begin{equation}
    \text{dist}(L, Q_i)\geq C^{**},
\end{equation}
for each $i=1,\dots, 4$.
\end{lemma}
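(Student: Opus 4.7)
\textbf{Proof plan for Lemma \ref{lower constant and length}.} The plan is to reduce the claim to a finite number of instances of part (2) of Lemma \ref{estimate for irrational lines} (via Remark \ref{remark for estimate or irrational lines}), one for each pair $(Q,Q_i)$ with $Q \in \text{Orb}_0(Q_0)$ and $i\in\{1,\dots,4\}$, and then take the minimum of the resulting constants.

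First, I would record the two structural facts about the finite set $\text{Orb}_0(Q_0)$ that the preceding discussion provides: every point $Q\in\text{Orb}_0(Q_0)$ is rational (since $Q_0=(\tfrac14,\tfrac14)$ is rational and each $\hat T_{0,a}$ is an integer toral automorphism) and satisfies $|Q-Q_i|\geq \tfrac14$ for all $i=1,\dots,4$. In particular, each pair $(Q,Q_i)$ consists of two distinct rational points of $\mathbb{T}^2$, so Lemma \ref{estimate for irrational lines}(2) is applicable to each such pair.

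Next, I would handle the mismatch between "line through $Q$" (in the statement to be proved) and "line with midpoint $Q$" (in Lemma \ref{estimate for irrational lines}). If $L$ is a line through $Q$ of length at most $l^{**}$, then $Q$ sits at some parameter along $L$, and one can extend $L$ in both directions to a longer line $\tilde L\supset L$ whose midpoint is $Q$ and whose length is at most $2l^{**}$. Then clearly $\text{dist}(L,Q_i)\geq \text{dist}(\tilde L,Q_i)$, so it suffices to bound the latter from below.

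Now, for each fixed pair $(Q,Q_i)$ with $Q\in\text{Orb}_0(Q_0)$ and $i\in\{1,\dots,4\}$, apply Lemma \ref{estimate for irrational lines}(2) together with Remark \ref{remark for estimate or irrational lines} (which allows the slope $-\tfrac{1}{[b_1,b_2,\dots]}$ with $b_k\in\mathcal A$) to the length $2l^{**}$, obtaining a constant $C^{**}_{Q,i}=C^{**}(2l^{**},Q,Q_i,\mathcal A)>0$ such that any line $\tilde L$ with midpoint $Q$, slope $-\tfrac{1}{[b_1,b_2,\dots]}$ with $b_k\in\mathcal A$, and length at most $2l^{**}$ satisfies $\text{dist}(\tilde L,Q_i)>C^{**}_{Q,i}$. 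Then set
$$C^{**}:=\min\Bigl\{\,\tfrac{1}{4}-\tfrac{1}{10}-\varepsilon,\;\min_{Q\in\text{Orb}_0(Q_0),\ 1\leq i\leq 4} C^{**}_{Q,i}\,\Bigr\}$$
for some small $\varepsilon>0$; the minimum is over a finite index set because $\text{Orb}_0(Q_0)$ is finite, so $C^{**}>0$, and the first term ensures $C^{**}\in(0,\tfrac14-\tfrac{1}{10})$. Combining with the extension step above gives $\text{dist}(L,Q_i)\geq \text{dist}(\tilde L,Q_i)>C^{**}$ for every $L$ and $i$ as in the statement, completing the proof. There is no real obstacle here: once the finiteness of $\text{Orb}_0(Q_0)$ and its separation from $\{Q_1,\dots,Q_4\}$ are in hand, the lemma is a direct packaging of Lemma \ref{estimate for irrational lines}(2).
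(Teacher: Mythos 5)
Your proof is correct and takes essentially the same route as the paper: the paper's own (very terse) proof likewise reduces the claim to Lemma \ref{estimate for irrational lines} part (2), via Remark \ref{remark for estimate or irrational lines}, applied to the finitely many pairs $(Q,Q_i)$ with $Q\in\text{Orb}_0(Q_0)$, with the bound $\frac{1}{4}-\frac{1}{10}$ coming from the separation of the orbit from the points $Q_i$ and the size of the boxes $\mathbb B_i$. Your midpoint-extension step and the explicit minimum over the finite orbit are just careful packaging of details the paper leaves implicit.
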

\begin{proof}
    \emb{The proof is analogous to that of Lemma \ref{upper constant and length} but through an application Lemma \ref{estimate for irrational lines} part (2). The quantity $\frac{1}{4}-\frac{1}{10}$ comes from the distance between $\text{Orb}_0\left(Q_0\right)$ and each $Q_i$, and the size of each $\mathbb B_i$.}
\end{proof}
    Recalling our constant $C_1$ from \emb{Lemma} \ref{curve length compared to distance}, we will choose $N^*\in\mathbb N$ such that 
    \begin{equation}\label{N large enough}
        C_4^{-1}\frac{\frac{C^{**}}{4}}{C_1 C^*\left(\mu_1-\delta\right)^{-N^*}}-C_4^{-1}\\
    \geq M
    \end{equation}
   and
   \begin{equation}\label{Rectangle not too big}
       C_1(\mu_1-\delta)^{-N^*}C^*\leq \frac{C^{**}}{4}.
   \end{equation}

   \subsection{ \emb{Choice of Neighborhood}}\label{Translation to a Toral Map Problem}
We are now ready to choose the neighborhood $O$ that was described at the beginning of this section.

   From \emb{Proposition} \ref{outer cone centering gives inner ball}, there is a  $r_0=r_0\left(N^*, C^{**}, K\right)$ such that

\begin{equation}\label{fixed inner ball radius}
    r_0<(\mu_2+\delta)^{-N^*-1}\frac{C^{**}}{4},
\end{equation}
and for each $Q\in\mathbb T^2$ and any rectangle $R$ centered at $Q$ with respect to $\tilde K(Q)$ satisfying

\begin{itemize}
    \item $\partial^s_{\bullet} R $ connects opposite boundaries of $\tilde K_{\bullet}(Q)$ for each $\bullet\in\{+,-\}$
    \item $\text{dist}\left(\partial^s R, Q\right)\geq (\mu_2+\delta)^{-N^*-1}\frac{C^{**}}{2}$
    \item $\partial ^{ns} R$ is a union of two disjoint lines with endpoints belonging to $\partial \tilde K(Q)$
\end{itemize}
we must have that $B_{r_0}(Q)\subset R$ and if $\gamma$ is a curve tangent to $\mathcal K^u$ that connects $\partial^s_{+}R$ and $\partial^s_{-}R$ and satisfies $\gamma \cap B_{r_0}(Q)\neq \emptyset$, then $\gamma$ is completely contained in $R$ and does not intersect $\partial^{ns} R$.

Referring to Lemma \ref{connection}, let us choose $\rho>0$ so small that $r(\rho)<\frac{r_0}{2}$ and set $\lambda_1:=\min_{a\in\mathcal A}\left\{\tilde \lambda(\rho, a)\right\}$. 

\emb{For $\lambda\in(0,\lambda_1)$}, it is a straightforward calculation to check that the line $\hat L_0:=\left[0,\frac{1}{2}\right]\times\{0\}\subset \mathbb T^2$ is mapped into $L_{0}\cap \mathbb S$ under the map $F$. Also, the projection map $\pi_{\lambda}$ sends $L_{\lambda}\cap \mathbb S_{\lambda, O_{\rho}}$ onto a curve in $\mathbb S$ that is $C^2$ close to a segment of $L_0$, provided $\lambda$ is small. \emb{That is, for small $\lambda$, the curve $\hat L_{\lambda}\subset \mathbb T^2\setminus \emb{\tilde O_{r(\rho)}}$ is $C^2$ close to a segment of $\hat L_0$, and in particular, $\hat L_0\setminus \emb{\tilde O_{r(\rho)}}$ as $\lambda\rightarrow 0$ in the $C^2$-sense.}

\emb{So, to complete the proof of Theorem \ref{intermediate theorem}, we will show} that there is a $\lambda_0\in(0,\lambda_1)$ such that 

\begin{equation}\label{Hausdorff dimension of torus Cantor}
        \text{dim}_{H}\left( \mathcal C\left(\lambda, \emb{\tilde O_{r(\rho)}}\right)\right)\geq \frac{\log 2}{\log\left(2+\frac{1}{M}\right)}
    \end{equation}
    for all $\lambda\in[0,\lambda_0]$.
    
    \subsection{\emb{Extended Toral Maps}}\label{extended torus map}

Now, for each $a\in\mathcal A$ and $\lambda\in [0,\lambda_1]$, we will construct an Anosov diffeomorphism that coincides with $\tilde T_{\lambda, a}$ outside of $O_{r_0}$.

\begin{lemma}\label{extension}
For each $a\in\mathcal A$ and $\lambda\in\left[0,\lambda_1\right]$, there is a smooth diffeomorphism $\hat T_{\lambda,a}:\mathbb{T}^2\rightarrow \mathbb{T}^2$,  such that $\hat T_{\lambda,a}=\tilde T_{\lambda, a}\restriction_{\mathbb{T}^2\setminus O_{r_0}}$ and $\lim\limits_{\lambda\rightarrow 0}\left\|  \hat T_{\lambda, a} -\hat T_{0,a} \right\|_{C^2}=0$ on $\mathbb{T}^2$.
\end{lemma}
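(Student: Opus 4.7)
The plan is to construct $\hat T_{\lambda,a}$ by smoothly interpolating between $\tilde T_{\lambda,a}$ on the complement of $O_{r_0}$ and the linear model $\hat T_{0,a}$ on a neighborhood of $\tilde O_{r(\rho)}$, using a fixed cut-off. The key geometric fact is that $r(\rho)<r_0/2$, so $\tilde O_{r(\rho)}$ is compactly contained in $O_{r_0}$ and both maps are defined (and, by Lemma \ref{connection}, $C^2$-close for small $\lambda$) on the annular region $O_{r_0}\setminus\overline{\tilde O_{r(\rho)}}$ where the interpolation will take place.

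First I would choose a smooth function $\phi:\mathbb T^2\to[0,1]$ with $\phi\equiv 1$ on $\mathbb T^2\setminus O_{r_0}$ and $\phi\equiv 0$ on an open neighborhood of $\overline{\tilde O_{r(\rho)}}$. By Lemma \ref{connection}, after shrinking $\lambda_1$ if necessary, we may assume that $\|\tilde T_{\lambda,a}-\hat T_{0,a}\|_{C^0}$ is arbitrarily small on $\mathbb T^2\setminus\tilde O_{r(\rho)}$ for every $\lambda\in[0,\lambda_1]$. Working in the universal cover $\mathbb R^2$, let $\widetilde{\hat T}_{0,a}$ denote the linear lift given by the matrix $A_a$, and let $\widetilde{\tilde T}_{\lambda,a}$ denote the unique lift of $\tilde T_{\lambda,a}$ on $\pi^{-1}(\mathbb T^2\setminus\tilde O_{r(\rho)})$ satisfying $\|\widetilde{\tilde T}_{\lambda,a}-\widetilde{\hat T}_{0,a}\|_{C^0}<\tfrac{1}{2}$. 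I then define
\begin{equation*}
\widetilde{\hat T}_{\lambda,a}(P):=\widetilde{\hat T}_{0,a}(P)+(\phi\circ\pi)(P)\bigl(\widetilde{\tilde T}_{\lambda,a}(P)-\widetilde{\hat T}_{0,a}(P)\bigr),
\end{equation*}
extended by $\widetilde{\hat T}_{0,a}$ on the set where $\phi\circ\pi$ vanishes. Since $\phi\circ\pi$ is supported in the set where both lifts are defined, this gives a smooth map on $\mathbb R^2$. It commutes with the $\mathbb Z^2$-action (the bracketed difference is $\mathbb Z^2$-equivariant and $\phi\circ\pi$ is $\mathbb Z^2$-invariant), so it descends to a smooth $\hat T_{\lambda,a}:\mathbb T^2\to\mathbb T^2$.

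The remaining verifications are routine: on $\mathbb T^2\setminus O_{r_0}$ we have $\phi\equiv 1$, hence $\hat T_{\lambda,a}=\tilde T_{\lambda,a}$ there. The Leibniz rule applied to the lifts gives
\begin{equation*}
\|\hat T_{\lambda,a}-\hat T_{0,a}\|_{C^2(\mathbb T^2)}\le C\,\|\phi\|_{C^2}\,\|\tilde T_{\lambda,a}-\hat T_{0,a}\|_{C^2(\mathbb T^2\setminus\tilde O_{r(\rho)})}\longrightarrow 0
\end{equation*}
as $\lambda\to 0$, by Lemma \ref{connection}. Since $\hat T_{0,a}$ is an Anosov diffeomorphism of $\mathbb T^2$, any sufficiently $C^1$-small perturbation of it is also a diffeomorphism; after one further shrinking of $\lambda_1$, $\hat T_{\lambda,a}$ is a smooth diffeomorphism for every $\lambda\in[0,\lambda_1]$.

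The only real subtlety is that the pointwise difference of two torus-valued maps is not globally well-defined, which is why the interpolation must be carried out on the universal cover. This requires selecting canonical lifts, and the correct choice is singled out precisely by the $C^0$-smallness of $\tilde T_{\lambda,a}-\hat T_{0,a}$ guaranteed by Lemma \ref{connection}; once that selection is made, everything else (smoothness, boundary matching, descent to the torus, and the $C^2$ estimate) is formal.
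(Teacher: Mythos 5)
Your proposal is correct and takes essentially the same route as the paper: both construct $\hat T_{\lambda,a}$ by a bump-function interpolation between $\tilde T_{\lambda,a}$ and the linear model $\hat T_{0,a}$ on the annular region between $\tilde O_{r(\rho)}$ and $O_{r_0}$, with the $C^2$-closeness following from Lemma \ref{connection}. Your additional care with canonical lifts to $\mathbb R^2$ and with the diffeomorphism property (allowing a further shrinking of $\lambda_1$) only makes explicit points the paper's componentwise formula treats implicitly.
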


\begin{proof}
   Let $\psi:\mathbb T^2\rightarrow [0,1]$ be a smooth function such that $\psi=0$ on $\mathbb T^2\setminus O_{r_0}$ and $\psi=1$ on $O_{\frac{r_0}{2}}$. Then, as $\tilde O_{r(\rho)}\subset O_{\frac{r_0}{2}}$, we must have that $\tilde T_{\lambda, a}$ is defined on $\mathbb T^2\setminus O_{\frac{r_0}{2}}$.
   
   Writing in components 
   $$\tilde T_{0,a}(x,y)=\left(\tilde T_{0, a}^1(x,y),\tilde T_{0, a}^2(x,y)\right)  \ \text{and} \ \tilde T_{\lambda,a}(x,y)=\left(\tilde T_{\lambda, a}^1(x,y),\tilde T_{\lambda, a}^2(x,y)\right) $$
   where $\tilde T_{0,a}^1(x,y)=ax+y$, $\tilde T_{0,a}^2(x,y)=x$, and $\tilde T_{\lambda, a}^1(x,y), \ \tilde T_{\lambda, a}^2(x,y):\mathbb T^2\setminus \emb{\tilde O_{r(\rho)}}\rightarrow S^1$, we set
   $$\hat T_{\lambda, a}(x,y):=\left(\hat T_{\lambda, a}^1(x,y), \hat T_{\lambda, a}^2(x,y)\right),$$
   where 
   $$\hat T_{\lambda, a}^1(x,y):=\psi(x,y) \tilde T_{0, a}^1(x,y)+(1-\psi(x,y))\tilde T_{\lambda, a}^1(x,y)$$
   and 
   $$\hat T_{\lambda, a}^2(x,y):=\psi(x,y) \tilde T_{0, a}^2(x,y)+(1-\psi(x,y))\tilde T_{\lambda, a}^2(x,y).$$
   The result follows from computing all partial derivatives up to order 2 of $\hat T_{\lambda, a}$, and comparing them to those of $\hat T_{0, a}$.
\end{proof}
To ease notation, we will denote $f_{\lambda, n, m}:=\hat T_{\lambda, a_{n+m}}\circ\cdots \circ \hat T_{\lambda, a_{m}}$, for $n,m\geq 0$, where $\hat T_{\lambda,0}:=\text{id}$.

\emb{\begin{lemma}\label{common orbit}
    For each $\lambda\in\left[0,\lambda_1\right]$, we have 
    \begin{equation}\label{common orbit equation}
        \hat T_{\lambda, a_{n+m}}\circ\cdots\circ \hat T_{\lambda, a_m}\left(\frac{1}{4}, \frac{1}{4}\right)=\hat T_{0, a_{n+m}}\circ\cdots\circ \hat T_{0, a_m}\left(\frac{1}{4}, \frac{1}{4}\right)
    \end{equation}
    for any $m\geq 0$ and $n\in\mathbb N$.
\end{lemma}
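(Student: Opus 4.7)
The plan is to induct on $n$. The starting observation is that each $\hat T_{0, a_k}$ is induced by an integer matrix $A_{a_k} \in GL(2, \mathbb Z)$, so the orbit of $Q_0 = \left(\frac{1}{4}, \frac{1}{4}\right)$ under $\{\hat T_{0, a_k}\}$ stays in the $4$-torsion subgroup $\frac{1}{4}\mathbb Z^2 / \mathbb Z^2$; since $2 Q_0 = Q_3$ is a nonzero $2$-torsion element and each $A_{a_k}$ preserves the order of a torsion element, every iterate has order exactly $4$. Such points sit at distance at least $\frac{1}{4}$ from each $Q_i$, and since $r_0 < \frac{1}{4}$ by (\ref{fixed inner ball radius}), the orbit lies entirely outside $O_{r_0}$. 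Consequently, by the cutoff construction in the proof of Lemma \ref{extension}, on orbit points $\hat T_{\lambda, a}$ coincides with $\tilde T_{\lambda, a}$, so it suffices to establish the single-step identity $\tilde T_{\lambda, a}(P) = \hat T_{0, a}(P)$ for every orbit point $P$.

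To prove this identity I would exploit a direct case check showing that $F(x, y) = (\cos(2\pi(x+y)), \cos(2\pi x), \cos(2\pi y))$ sends every order-$4$ point of $\mathbb T^2$ to one of the six axis points $(\pm 1, 0, 0), (0, \pm 1, 0), (0, 0, \pm 1) \in S_0$. At each such axis point, the gradient of $x_1^2 + x_2^2 + x_3^2 - 2 x_1 x_2 x_3$ is parallel to the corresponding coordinate axis, so the outward normal to $S_0$ runs along that axis, and $\pi_\lambda^{-1}$ acts simply by rescaling: $\pi_\lambda^{-1}(\pm e_i) = \pm \sqrt{1 + \lambda^2/4}\, e_i$. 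The punchline is that the cross-term in $G(x_1, x_2, x_3) = (2 x_1 x_3 - x_2, x_1, x_3)$ vanishes at any axis point, so $G$, $H$, and hence each $T_a = G^a \circ H$, permute the scaled axis points on $S_\lambda$ by the same permutation that they induce on the unit axis points on $S_0$ (a $4$-cycle among $(\pm 1, 0, 0), (0, \pm 1, 0)$ and fixing $(0, 0, \pm 1)$, together with the coordinate swap from $H$). Chaining these facts yields
$$T_a(F_\lambda(P)) = \pi_\lambda^{-1}(T_a(F(P))) = \pi_\lambda^{-1}(F(\hat T_{0, a}(P))) = F_\lambda(\hat T_{0, a}(P)),$$
and combined with the defining relation $F_\lambda \circ \tilde T_{\lambda, a} = T_a \circ F_\lambda$ and the rule selecting $\tilde T_{\lambda, a}(P)$ as the $F_\lambda$-preimage closest to $\hat T_{0, a}(P)$, this forces $\tilde T_{\lambda, a}(P) = \hat T_{0, a}(P)$.

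The main obstacle I anticipate is the bookkeeping around the branch selection for $F_\lambda^{-1}$: because $F$ is generically $2$-to-$1$, I need $\lambda$ small enough that the other $F_\lambda$-preimage of $T_a(F_\lambda(P))$ remains well-separated from $\hat T_{0, a}(P)$. This should hold uniformly over $\lambda \in [0, \lambda_1]$ because the orbit of $Q_0$ is a finite set, each of its axis-point images in $S_0$ has its two $F$-preimages at definite distance on $\mathbb T^2$, and $\pi_\lambda$ varies continuously with $\lambda$. Once this is secured, a straightforward induction on $n$ closes the argument.
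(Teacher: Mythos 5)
Your proposal is correct and takes essentially the same route as the paper: the paper's own (terse) proof rests on exactly the observation that the trace-map orbit of $F_\lambda\left(\frac{1}{4},\frac{1}{4}\right)=\left(-\sqrt{1+\frac{\lambda^2}{4}},0,0\right)$ consists of coordinate-axis points of $S_\lambda$ on which $\pi_\lambda$ acts by pure rescaling and on which the maps $T_a$ act independently of the scale, so the orbit stays far from the cusps (hence outside $O_{r_0}$, where $\hat T_{\lambda,a}=\tilde T_{\lambda,a}$) and the closest-preimage rule defining $\tilde T_{\lambda,a}$ returns the $\lambda=0$ iterate. You simply fill in the computations the paper leaves as ``one can check'' (the order-$4$ torsion argument and the axis-point bookkeeping), and your worry about branch separation is moot since $\hat T_{0,a}(P)$ itself lies in the preimage set and is therefore automatically the closest element.
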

\begin{proof}
    Based on the compositions from (\ref{auxiliary map decomposition}) and construction of $\pi_{\lambda}$ in Lemma \ref{connection}, one can check that for any $\lambda\in[0,\lambda_1]$, the set $$\left\{ \left(T_{a_n}\circ\cdots\circ T_{a_1}\right)\left(-\sqrt{1+\frac{\lambda^2}{4}},0,0   \right) : n\in\mathbb N\right\}$$ is finite and
    $$\left(\pi_{\lambda}\circ T_{a_n}\circ\cdots\circ T_{a_1}\right)\left(-\sqrt{1+\frac{\lambda^2}{4}},0,0   \right)=T_{a_n}\circ\cdots\circ T_{a_1}(-1,0,0).$$ Since $F\left(\frac{1}{4}, \frac{1}{4}\right)=(-1,0,0)$, and $\left(\frac{1}{4}, \frac{1}{4}\right)\in \mathbb T^2\setminus O_{r_0}$ (see \ref{fixed inner ball radius}), this gives 
    $$\hat T_{\lambda, a_n}\circ\cdots\circ \hat T_{\lambda, a_1}\left(\frac{1}{4}, \frac{1}{4}\right)=\hat T_{0, a_n}\circ\cdots\circ \hat T_{0, a_1}\left(\frac{1}{4}, \frac{1}{4}\right)$$
    for any $n\in\mathbb{N}$, and this non-stationary orbit is a finite set. This same argument gives (\ref{common orbit equation}) for any $m\geq 0$.
\end{proof}}

\emb{\subsubsection{Further Choice of Constants.}}
 Now, one may choose a $\lambda_2\in(0,\lambda_1)$ such that for any $\lambda\in [0,\lambda_2]$ and $n\in\mathbb N$, we have 
 \begin{itemize}
     \item  The map $\hat T_{\lambda, a_n}$ satisfies Property (C) 
     
     \item We have the estimate 
     \begin{equation}\label{C^2 norm bound}
         \left\|\hat T_{\lambda, a_n}\right\|_{C^2}<C_2+1
     \end{equation}
     
     \item For any $P\in\mathbb {T}^2$, we have 
     \begin{equation}\label{Orientation for toral maps}
         D_P\hat T_{\lambda, a_n} \left(K_{\bullet} (P) \right)\subset K_{\bullet}\left(\hat T_{\lambda, a_n}(P)\right)
     \end{equation}
     for each $\bullet\in\{+,-\}$.

     \item The smooth curve $\hat L_{\lambda}$ is tangent to $\mathcal K^u$ 
 \end{itemize}
 The first and second part can be taken as a consequence of Lemma \ref{extension}. The third part comes from the fact that $A_a$ has a positive unstable eigenvalue for each $a$, so if $\hat T_{\lambda, a}$  is sufficiently $C^1$-close to $\hat T_{0, a}$, then the differential of $\hat T_{\lambda, a}$ will preserve orientation of vectors from the cone $K$. The last part comes from the fact that the curve $\hat L_\lambda$ tends to $\hat L_0\setminus \emb{\tilde O_{r(\rho)}}$ in the $C^2$-sense.

\subsection{\emb{Properties of the Extended Toral Maps}\label{Properties of the Extended Toral Maps}}
\emb{This subsection is dedicated to deriving useful dynamical properties of our extended maps $\left\{ \hat T_{\lambda, a} \right\}$.}
\emb{\subsubsection{Lengths and Orientation of  Curves}\label{Lengths an Orientation of Curves Segment}
From Property (C), we have
\begin{lemma}\label{lengths of curves}
For all $\lambda\in[0,\lambda_2]$, $m\in\mathbb{N}$ and $n\geq 0$, we have
\begin{itemize}
    \item[1.)] If $\gamma$ is a curve tangent to $\mathcal K^s$, then $$(\mu_1-\delta)^{n+1}\text{length}(\gamma)\leq\text{length}\left(f_{\lambda, n, m}^{-1}(\gamma)\right)\leq (\mu_2+\delta)^{n+1}\text{length}(\gamma)$$
    \item[2.)] If $\gamma$ is a curve tangent to $\mathcal K^u$, then $$(\mu_1-\delta)^{n+1}\text{length}(\gamma)\leq\text{length}\left(f_{\lambda, n, m}(\gamma)\right)\leq (\mu_2+\delta)^{n+1}\text{length}(\gamma)$$
\end{itemize}
\end{lemma}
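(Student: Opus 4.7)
The plan is to prove this by a direct induction on $n+1$, the number of compositions making up $f_{\lambda,n,m}$, using Property (C) together with the cone invariance property that each $D\hat T_{\lambda,a_k}$ sends $K^u$ into $K^u$ (and $D\hat T_{\lambda,a_k}^{-1}$ sends $K^s$ into $K^s$).

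I would start with part (2). Let $\gamma:[0,1]\to \mathbb{T}^2$ be a $C^1$ parametrization of an unstable curve, so $\gamma'(t)\in K^u(\gamma(t))$ for every $t$. Observe that by the cone invariance in Property (C), the tangent vectors
$$v_k(t) := D_{\gamma(t)}\left(\hat T_{\lambda,a_{k+m}}\circ\cdots\circ \hat T_{\lambda,a_m}\right)\gamma'(t)$$
lie in $K^u$ at each intermediate basepoint, for $k=0,1,\dots,n$. Thus the single-step bound in Property (C) can be applied at every stage of the composition. By the chain rule and induction on $k$, one obtains
$$(\mu_1-\delta)^{k+1}|\gamma'(t)|\le |v_k(t)|\le (\mu_2+\delta)^{k+1}|\gamma'(t)|.$$
Setting $k=n$ and integrating over $t\in[0,1]$ yields
$$(\mu_1-\delta)^{n+1}\text{length}(\gamma)\le \text{length}(f_{\lambda,n,m}(\gamma))\le (\mu_2+\delta)^{n+1}\text{length}(\gamma),$$
which is exactly the statement in part (2).

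For part (1), the argument is completely symmetric. Write $f_{\lambda,n,m}^{-1}=\hat T_{\lambda,a_m}^{-1}\circ\cdots\circ \hat T_{\lambda,a_{n+m}}^{-1}$, take a stable curve $\gamma$ with $\gamma'(t)\in K^s(\gamma(t))$, and invoke the $K^s$-invariance and expansion bound $(\mu_1-\delta)|v|\le |D\hat T_{\lambda,a_k}^{-1}v|\le (\mu_2+\delta)|v|$ from Property (C). Inductively propagating through the $n+1$ factors in $f_{\lambda,n,m}^{-1}$ and integrating gives the desired inequality.

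There is no substantive obstacle here: the entire proof is a straightforward application of Property (C) combined with cone invariance so that the one-step expansion estimate can be chained, plus the chain rule and an integration of the length element. The only point one must be careful about is that the cone fields $\mathcal K^s$ and $\mathcal K^u$ are preserved along the orbit so that the bound in Property (C) really does apply at every intermediate point; this is precisely guaranteed by the first bullet of Property (C).
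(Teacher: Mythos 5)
Your proof is correct and is essentially the argument the paper intends: the paper states this lemma as an immediate consequence of Property (C) without writing out details, and your chaining of the one-step cone-invariance and expansion bounds through the composition, followed by integrating the length element, is exactly that implicit argument.
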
}
\emb{\begin{remark}\label{lifted curve orientation}
   Suppose that $\kappa$ is a curve contained in $\mathbb B_{\frac{1}{10}}(Q)$, for some $Q\in\mathbb T^2$, that is tangent to $\mathcal K$, with one of its endpoints being $Q$. Then, we must have that $\kappa\subset  \tilde K_{\bullet}(Q)$ for $\bullet=+$ or $-$. If we parameterize $\kappa$ via $\kappa(t):[0,1]\rightarrow \mathbb{T}^2$ so that it has positive orientation, then for each $t\in[0,1]$, we would have $\dot \kappa(t)\in K_{+}\left(  \kappa(t) \right)$, and hence (\ref{Orientation for toral maps}) implies that $f_{\lambda, n, m}( \kappa)\subset\tilde K_{\bullet}\left(f_{\lambda, n, m}(Q)\right) $, where one of its endpoints is $f_{\lambda, n, m}(Q)$. In other words, the orientation of curves tangent to $\mathcal K$ is preserved under the map $f_{\lambda, n, m}$.
\end{remark}}

\subsubsection{Bounded Distortion Estimate}
If $P$ and $Q$ are points on a curve $\gamma$, we denote the length of the segment of $\gamma$ from $P$ to $Q$ by $\text{dist}_{\gamma}(P,Q)$. Recalling (\ref{bounded distortion constant}), from Theorem 2 of \cite{BM}, we obtain the following bounded distortion estimate:
\begin{lemma}\label{bounded distortion}
    For each $\lambda\in[0,\lambda_2]$, if $\gamma$ is a sub-curve of $\hat L_\lambda$ that has endpoints $P$ and $Q$ and satisfies $f_{\lambda, m,0}(\gamma)\in \mathbb B_i$ for some $i=1,\dots, 4$ \emb{and $m\geq 0$}, then for any point $Q'\in \gamma\setminus\{P,Q\}$, we must have 
    \begin{align*}
        C_4^{-1}\frac{\text{dist}_{\gamma}(P,Q')}{\text{dist}_{\gamma}(Q,Q')}&\leq \frac{\text{dist}_{f_{\lambda, m, 0}(\gamma)}\left(f_{\lambda, m, 0}(P),f_{\lambda, m, 0}(Q')\right)}{\text{dist}_{f_{\lambda, m, 0}(\gamma)}\left(f_{\lambda, m, 0}(Q),f_{\lambda, m, 0}(Q')\right)}\\
        &\leq C_4\frac{\text{dist}_{\gamma}(P,Q')}{\text{dist}_{\gamma}(Q,Q')}.
    \end{align*}
\end{lemma}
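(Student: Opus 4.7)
The plan is to apply the non-stationary bounded distortion theorem from \cite{BM} to the forward orbit of curves $\gamma_k := f_{\lambda, k, 0}(\gamma)$ for $k = 0, 1, \dots, m$. First I would note that, by Property (C) and (\ref{Orientation for toral maps}), each $\hat T_{\lambda, a_{k+1}}$ maps $\mathcal K^u$ into itself, so each $\gamma_k$ is a smooth curve tangent to $\mathcal K^u$. The quantity governing the distortion is the total variation of the logarithmic expansion rate $\log |D \hat T_{\lambda, a_{k+1}}|_{T\gamma_k}|$ along $\gamma$, and the proof reduces to bounding this by the constant inside $C_4$.

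The first main step would be to bound $\sum_{k=0}^{m} \text{length}(\gamma_k)$. Since $\gamma_m = f_{\lambda, m, 0}(\gamma) \subset \mathbb B_i$ and is tangent to $\mathcal K^u$, Lemma \ref{curve length compared to distance} bounds $\text{length}(\gamma_m)$ by $C_1$ times the distance between its endpoints, which in turn is controlled by the size of $\mathbb B_i$, yielding $\text{length}(\gamma_m) \leq C_1 \cdot \frac{\sqrt{2}}{10}$. Pulling back via the lower expansion estimate of Lemma \ref{lengths of curves}, we obtain $\text{length}(\gamma_k) \leq (\mu_1 - \delta)^{-(m-k)} \text{length}(\gamma_m)$, and summing the resulting geometric series gives $\sum_{k=0}^{m} \text{length}(\gamma_k) \leq C_1 C_3 \frac{\sqrt{2}}{10}$ by the definition of $C_3$ in (\ref{geometric sum}). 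Having this bound in hand, Lemma \ref{maximal angles} is applicable and produces $\sum_{k=0}^{m} \text{Ang}(\gamma_k) \leq \Theta$.

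Second, I would carry out the classical distortion computation adapted to the non-stationary setting. Parameterizing $\gamma$ by arclength and writing $\log |D f_{\lambda, m, 0}|_{T\gamma}|$ as a telescoping sum over $k$ of $\log |D \hat T_{\lambda, a_{k+1}} \cdot v_k|$ evaluated at $f_{\lambda, k, 0}(\cdot)$, where $v_k$ is the unit tangent to $\gamma_k$, the variation of the $k$-th summand across two points of $\gamma$ is controlled by $\|\hat T_{\lambda, a_{k+1}}\|_{C^2}$ times $(\text{length}(\gamma_k) + \text{Ang}(\gamma_k))$. The uniform bound $\|\hat T_{\lambda, a_n}\|_{C^2} \leq C_2 + 1$ from (\ref{C^2 norm bound}) together with the two preceding estimates then yields a total log-variation at most $(C_2+1)^2 (\Theta + C_1 C_3 \frac{\sqrt{2}}{10})$; exponentiating produces exactly the constant $C_4$ defined in (\ref{bounded distortion constant}). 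Comparing arclength ratios before and after applying $f_{\lambda, m, 0}$ with this log-variation bound gives the claimed two-sided inequality.

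The main obstacle is organizing the standard stationary distortion argument in the non-stationary setting so that the bounds are uniform in $m$, $\lambda$, and the symbolic sequence $\overline a$. The non-trivial input is not any one step but the interplay of three ingredients: the uniform $C^2$ bound on the maps, the common invariant cone $\mathcal K^u$, and the fact that only the final iterate is required to lie in a bounded region $\mathbb B_i$, which — via backward expansion — forces the total length to be summable. This is exactly the content Theorem 2 of \cite{BM} packages in a single statement, so verifying its hypotheses (as above) is all that is needed.
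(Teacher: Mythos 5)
Your proposal is correct and follows essentially the same route as the paper's proof: bound $\sum_{k=0}^{m}\text{length}\left(f_{\lambda,k,0}(\gamma)\right)$ by $C_1C_3\frac{\sqrt{2}}{10}$ using the containment in $\mathbb B_i$ together with the backward-contraction estimate of Lemma \ref{lengths of curves}, deduce the angle bound $\Theta$ from Lemma \ref{maximal angles}, and then invoke Theorem 2 of \cite{BM} with the constant $C_4$ from (\ref{bounded distortion constant}). The extra sketch of the telescoping log-derivative computation is just an unpacking of what the cited theorem already provides, so the argument matches the paper's.
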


\begin{proof}
Let $\lambda\in[0,\lambda_2]$. \emb{If} $f_{\lambda, m,0}(\gamma)\in \mathbb B_i$, it must have length less than $C_1\frac{\sqrt{2}}{10}$. Thus, from Lemma \ref{lengths of curves} and recalling (\ref{geometric sum}), it must be the case that for each $k$ with $0\leq k\leq m$, we have
$$\text{length}\left(f_{\lambda, k, 0}(\gamma)\right)\leq (\mu_1-\delta)^{-(m-k)}\text{length}(f_{\lambda, m, 0}(\gamma))\leq C_1\frac{\sqrt{2}}{10}(\mu_1-\delta)^{-(m-k)},$$
so that
$$\sum_{k=0}^{m}\text{length}(f_{\lambda,k,0}(\gamma))\leq C_1\frac{\sqrt{2}}{10}\sum_{k=0}^{m} (\mu_1-\delta)^{-(m-k)}\leq C_1C_3\frac{\sqrt{2}}{10}.$$
 
In addition, since we have that $\hat L_{\lambda}$ is a smooth curve tangent to $\mathcal K^u$ and $\left \|\hat T_{\lambda, a_n} \right\|\leq C_2+1$ for all $n\in\mathbb N$, it follows from \emb{Lemma} \ref{maximal angles} that 
$$\sum_{k=0}^m\text{Ang}\left(f_{\lambda, k, 0}\emb{\left(\gamma\right)}\right)<\Theta.$$
 Thus, using Theorem 2 of \cite{BM}, the result holds for $C_4$ as defined in (\ref{bounded distortion constant}).
\end{proof}

\subsubsection{\emb{Non-Stationary Stable Manifolds}}\label{stable manifold section}
In this \emb{subsection}, we establish a non-stationary version of the Stable Manifold Theorem for our collection of maps.

Consider the set
$$W^s_{\lambda}\left(Q, m\right):=\left\{P \in \mathbb T^2: \lim\limits_{n\rightarrow \infty} \emb{\left|f_{\lambda,n,m}(P)- f_{\lambda,n,m}(Q)  \right|}=0 \right\},$$
which we call the \textit{non-stationary stable manifold} of $\left(T_{a_{n+m}}\right)_{n\geq 1}$ at $Q$. For $\eta>0$, we define 
$$W^s_{\lambda, \text{loc}}(Q,m):=\{P\in B_{\eta}(Q) : f_{\lambda, n, m}(P)\in B_{\eta}(f_{\lambda, n, m}(Q)) \ \text{for all} \ n\geq 0\}.$$
We will denote 
\begin{equation}
    \text{Orb}_{\lambda, m}(Q):=\{f_{\lambda,n,m}(Q)\}_{n\geq 0}\cup\{Q\} \ \text{and} \ \text{Orb}_{\lambda}(Q):=\bigcup_{m=1}^{\infty}\text{Orb}_{\lambda,m}(Q).
\end{equation}
\begin{theorem}\label{stable manifold theorem}
There is a $\lambda_3\in(0,\lambda_2)$ such that for all $\lambda\in[0,\lambda_3]$ and $\eta=\frac{1}{10}$, we have
\begin{itemize}
   \item[1.)] For all $Q\in\mathbb T^2$ and $n\in\mathbb{N}$, we have that $W^s_{\lambda, \text{loc}}(Q,n)$ is a $C^1$ curve that is tangent to $\mathcal K^s$ and satisfies 
   $$\hat T_{\lambda, a_n}\left(W^s_{\lambda, \text{loc}}(Q,n)  \right)\supset W^s_{\lambda, \text{loc}}\left(\hat T_{\lambda, a_n}(Q), n+1    \right).$$
\item[2.)] For each $\epsilon>0$, there is a $\lambda'(\epsilon)>0$ such that if $\lambda<\lambda'(\epsilon)$, then 
$$\emb{\text{dist}_H} \left( W^s_{0,\text{loc}}(Q, k), W^s_{\lambda, \text{loc}}(Q, k)\right)<\epsilon$$
for all $k\in\mathbb{N}$ and any point $Q$ satisfying 
$f_{\lambda, n, k}(Q)=f_{0,n,k}(Q)$ for any $n, k \geq 0$ and each $\lambda$.

\item[3.)] We have that $W^s(Q,k)=\bigcup_{n=0}^{\infty} f_{\lambda, n, k}^{-1}\left(W^s_{\lambda, \text{loc}} (f_{\lambda, n, k}(Q), k+n+1) \right)$ and there is a $C^1$ injective immersion $\iota_{\lambda, Q,k}:\mathbb{R}\rightarrow \mathbb{T}^2$ such that $\iota_{\lambda, Q,k}(\mathbb{R})=W^s_{\lambda}(Q,k)$ and $TW^s_{\lambda}(Q,k)\subset \mathcal K^s$.
\end{itemize}
   
\end{theorem}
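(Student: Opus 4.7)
The plan is to construct the local non-stationary stable manifolds via the Hadamard graph transform adapted to the non-stationary setting—using Property (C) for uniform hyperbolicity—and then to extend to the global manifolds as nested unions of pullbacks. Fix $k \geq 0$ and set $P_n := f_{\lambda, n, k}(Q)$. For each $n \geq 0$, let $\mathcal G(P_n)$ be the space of $C^1$ curves through $P_n$ that are tangent to $\mathcal K^s$ and cross the rectangular neighborhood $\mathbb B_{1/10}(P_n)$, connecting its two $\partial^u$ components; by the cone structure of $K^s$, each such curve is the graph of a function $\phi$ over the $V^{\perp}$-coordinate with Lipschitz constant controlled by $K^s$, making $\mathcal G(P_n)$ complete in the $C^0$-norm on $\phi$.

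For Part 1, define the graph transform $\Gamma_n:\mathcal G(P_{n+1})\to\mathcal G(P_n)$ by pulling a curve back under $\hat T_{\lambda,a_{n+k}}^{-1}$, selecting the connected component through $P_n$, and truncating to $\mathbb B_{1/10}(P_n)$. Property (C) preserves tangency to $\mathcal K^s$ under pullback, and the $C^1$-perturbative analog of Lemma \ref{linear overflow} (for $A_a^{-1}$ acting on $K^s$-tangent curves) ensures the pullback overflows the box, provided $\lambda_3\in(0,\lambda_2)$ is chosen small enough—using Lemma \ref{extension}—to make $\hat T_{\lambda,a}^{-1}$ sufficiently $C^1$-close to $A_a^{-1}$ for every $a\in\mathcal A$. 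A standard hyperbolic calculation then shows each $\Gamma_n$ is a $C^0$-contraction with rate uniformly less than $1$ in $n$, $k$, $\lambda$. For any $\sigma_N\in\mathcal G(P_N)$, the sequence $\Gamma_0\circ\cdots\circ\Gamma_{N-1}(\sigma_N)$ is Cauchy in $\mathcal G(P_0)$, and its limit, promoted to $C^1$ by the cone contraction acting on tangent vectors together with Lemma \ref{bounded distortion}, is taken as $W^s_{\lambda,\text{loc}}(Q,k)$. The dynamical characterization in the theorem statement and the invariance $\hat T_{\lambda,a_n}(W^s_{\lambda,\text{loc}}(Q,n))\supset W^s_{\lambda,\text{loc}}(\hat T_{\lambda,a_n}(Q),n+1)$ follow immediately from the construction.

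For Part 2, when $Q$'s orbit is shared between the $\lambda$- and $0$-systems (as in Lemma \ref{common orbit}), the transforms $\Gamma_n^{(\lambda)}$ and $\Gamma_n^{(0)}$ are based at the same $P_n$. Lemma \ref{extension} yields $\|\hat T_{\lambda,a}-\hat T_{0,a}\|_{C^2}\to 0$ uniformly in $a\in\mathcal A$, hence $\|\Gamma_n^{(\lambda)}-\Gamma_n^{(0)}\|_{C^0}\to 0$ uniformly in $n$. Standard continuity-in-parameter for a family of uniform contractions then gives $C^0$-convergence of the fixed curves: given $\epsilon>0$, choose $N$ so that $N$ iterations of either contraction shrink any $C^0$-error below $\epsilon/2$, then pick $\lambda'(\epsilon)$ so that the first $N$ transforms at $\lambda$ and at $0$ agree in $C^0$ within $\epsilon/2$; the triangle inequality delivers $\text{dist}_H(W^s_{0,\text{loc}}(Q,k),W^s_{\lambda,\text{loc}}(Q,k))<\epsilon$ uniformly in $k$. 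For Part 3, Part 1's invariance makes $\{f_{\lambda,n,k}^{-1}(W^s_{\lambda,\text{loc}}(P_n,k+n+1))\}_n$ a nested sequence of $C^1$ curves through $Q$ tangent to $\mathcal K^s$, and Lemma \ref{lengths of curves} ensures each pullback strictly extends the previous; a definition-chase using cone contraction under forward iteration identifies this nested union with $W^s_\lambda(Q,k)$, and concatenating arclength parameterizations of the pieces produces the required $C^1$ injective immersion $\iota_{\lambda,Q,k}:\mathbb R\to\mathbb T^2$ with $TW^s_\lambda(Q,k)\subset\mathcal K^s$ inherited from the local leaves.

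The main obstacle is verifying uniform $C^0$- and $C^1$-contraction of the non-stationary graph transforms $\Gamma_n$, and in particular handling truncation at $\partial\mathbb B_{1/10}$ for maps that are only $C^2$-perturbations of distinct linear automorphisms rather than a single hyperbolic map. Property (C) together with the uniform bound (\ref{C^2 norm bound}) supplies the uniform hyperbolic constants, but the perturbative overflow from Lemma \ref{linear overflow} is essential for ensuring that truncations remain in $\mathcal G(P_n)$, and the bounded-distortion estimate of Lemma \ref{bounded distortion} is what keeps the tangent-vector bounds clean across orbit segments visiting the boxes $\mathbb B_i$.
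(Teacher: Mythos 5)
Your proposal is correct and takes essentially the same route as the paper: a non-stationary Hadamard graph transform built from Property (C), the perturbative overflow of Lemma \ref{linear overflow} applied to the inverse maps to keep truncated pullbacks in the graph space, uniform contraction plus continuity of the resulting "fixed curves" in $\lambda$ (the paper packages this as Proposition \ref{nonstationary contractions}, recentering all transforms at the origin rather than using moving spaces $\mathcal G(P_n)$) for parts (1) and (2), and the nested union of pullbacks for part (3). The only quibble is your appeal to Lemma \ref{bounded distortion} for upgrading the limit curve to $C^1$ — that lemma concerns arclength ratios of unstable-type curves under forward iteration and is not what is needed here; the correct mechanism, which you also name, is the contraction of the stable cones under pulled-back differentials (the paper cites Proposition 6.2.12 and Lemma 6.2.15 of [KH] for exactly this).
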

Parts (1) and (3) of this Theorem are very close to a Corollary of Theorem 6.2.8 from \cite{KH}.

We will sketch the proof of these items using similar techniques from \cite{KH}, in our context. 
\begin{proof}[Sketch of Proof of Theorem \ref{stable manifold theorem}.]
We prove the result first for $Q=0$. Recalling our notations from Section \ref{local cones and boxes notation}, we work locally in the neighborhood $\mathbb B_1=\pi\left(\text{Box}_{\frac{1}{10}}(0)\right)$ and in the coordinate system $\tilde V^\perp\oplus \tilde V$ where $\tilde V=\pi\left(V\cap \text{Box}_{\frac{1}{10}}(0)\right)$

Notice that there is a $l<1$ such that $\varphi:\tilde V^{\perp}\rightarrow  \tilde V$ is tangent to $\mathcal K^s$ if and only if $\text{Lip}(\varphi)<l$.

We set $$X:=\left\{ \text{graph}(\varphi) : \varphi\in \text{Lip}\left( \tilde V ^{\perp},  \tilde V\right), \ \text{Lip}(\varphi)<l, \ \varphi(0)=0 \right\}$$ which is complete under the metric $d_X$ via
$$d_X\left(\text{graph}(\varphi), \text{graph}(\varphi')\right):=d_{C^0}(\varphi, \varphi').$$

From \emb{Lemma} \ref{linear overflow} and the cone conditions, we may choose a $\lambda_3>0$ such that for each $\lambda\in[0,\lambda_3]$ and each $a\in\mathcal A$, if $\text{graph}(\varphi)\in X$, then its image under $\hat T_{\lambda,a}^{-1}$ must overflow $\mathbb B_1$ and be tangent to $\mathcal K^s$.

For each $\lambda\in[0,\lambda_3]$ and $n\in\mathbb{N}$, let us define $\Gamma_{\lambda, a_n}: X\rightarrow X$ so that $\Gamma_{\lambda, a_n}\left(\text{graph}(\varphi)\right)$ is the segment of $\hat T_{\lambda, a_n}^{-1}\left(\text{graph}(\varphi)\right)\cap \mathbb B_1$ that contains $0$. By the previous discussion, this map is well-defined. 

Through the common cone conditions (Property (C)), for each $\lambda\in[0,\lambda_3]$, we find that $\left( \Gamma_{\lambda, a_n} \right)$ is a sequence of contractions with uniform contraction rate over $n$. Thus, by Proposition \ref{nonstationary contractions}, for each $k\in\mathbb N$, there is a $\varphi_{\lambda, k}^*: V^{\perp}\rightarrow V$ such that $\text{graph}\left(\varphi_{\lambda,k}^*\right)\in X$ and 
$$\lim\limits_{n\rightarrow\infty}\Gamma_{\lambda, a_k}\circ\cdots\circ \Gamma_{\lambda, a_{n+k}}\text{graph}\left(\varphi\right)=\text{graph}\left(\varphi_{\lambda,k}^*\right)$$
for any $\text{graph}\left(\varphi\right)\in X$. By construction, it follows that 
$$W^s_{\lambda, \text{loc}}\left(0, k\right)=\text{graph}\left(\varphi^*_{\lambda, k}\right).$$

Again, through the cone conditions and $C^1$ smoothness of our maps, one finds that $\varphi_{\lambda,k}^*$ is differentiable and that
$$T_{P}\text{graph}\left( \varphi_{\lambda,k}^*\right)=\bigcap_{k=1}^{\infty}  D_{f_{\lambda, n, k}(P)} \left(f_{\lambda, n, k}\right)^{-1}\left[K^s\left( f_{\lambda, n, k}(P) \right)\right],$$
for each $P\in\text{graph}\left( \varphi_{\lambda,k}^*\right)$ where the resulting set is a one-dimensional line in $K^s(P)$, and these lines vary continuously in $P$ (see Proposition 6.2.12 and Lemma 6.2.15 from \cite{KH}). Hence, $\varphi_{\lambda,k}^*$ must be $C^1$ with its graph being tangent to the cone field $\mathcal K^s$. This establishes (1) and also (3).

To establish (2), we notice that the distance between $\Gamma_{0,n}$ and $\Gamma_{\lambda,n}$ in $(X,d_X)$, can be made arbitrarily small for a sufficiently small range of $\lambda$ due to the fact that $\hat T_{\lambda, a_n}$ is a $C^2$ perturbation of $\hat T_{0, a_n}$, and this smallness can be made uniform in $n$.

Hence, by Proposition \ref{nonstationary contractions}, we can make the corresponding fixed points $\text{graph}\left(\varphi_{0,k}^*\right)$ and $\text{graph}\left(\varphi_{\lambda,k}^*\right)$ arbitrary close in the metric $d_X$ for sufficiently small $\lambda$, and this closeness can be made uniform in $k$. 

For the case $Q\neq 0$, we repeat the process but for the family of maps given by 
$$P\mapsto \hat T_{\lambda, a_n}\left( P+f_{\lambda, n-1, 0}(Q)\right)-f_{\lambda, n-1, 0}(Q).$$
\end{proof}
Notice that in the case $\lambda=0$, we have $T_QW^s_0(Q,m)=E^s_{[a_m, a_{m+1},\dots]}$, but since $W^s_0(Q,m)$ is a line, from Lemma \ref{slope of linear stable manifolds}, we deduce that it is precisely the line through $Q$ of irrational slope given by $\frac{-1}{[a_m, a_{m+1},\dots]}.$

\begin{remark}
    One could claim higher regularity of $W^s_{\lambda,\text{loc}}(Q,k)$ and in fact show that these curves are smooth or even real analytic, but we only provide the statement we actually need for the proof.
\end{remark}

 The next Corollary expands on (2) from the previous theorem.
\begin{corollary}\label{perturbations of stable manifolds}
    Suppose that $Q_0\in\mathbb{T}^2$ is a point such that 
    $f_{\lambda, n, k}(Q_0)=f_{0,n,k}(Q_0)$ for any $n, k \geq 0$ and each $\lambda>0$, and that $\text{Orb}_0(Q_0)$ is finite. 
    For each $\epsilon>0$ and $n_0\in\mathbb{N}$, there is a $\lambda'\in \left(0,\lambda_3\right)$ such that if $\lambda\in\left[0, \lambda'\right]$, we have that $$\sup\limits_{\substack{k\in\mathbb{N} \\ 0\leq n \leq  n_0 \\ m\in\mathbb{N}}}\emb{\text{dist}_H}\left( f^{-1}_{\lambda, n, m} W^s_{\lambda, \text{loc}}(Q, k),  f^{-1}_{0, n, m} W^s_{0, \text{loc}}(Q, k)\right)<\epsilon,$$
    for all $Q\in\text{Orb}_{\lambda}(Q_0)$.
\end{corollary}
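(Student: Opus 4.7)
The plan is to combine Theorem~\ref{stable manifold theorem}(2) with a standard $C^1$-perturbation argument for compositions of finitely many maps, exploiting the finiteness of $\mathcal{A}$ and the uniform bound $n \le n_0$.

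First I would observe that $\text{Orb}_\lambda(Q_0) = \text{Orb}_0(Q_0)$ is a \emph{fixed} finite set: this is immediate from the hypothesis $f_{\lambda,n,m}(Q_0) = f_{0,n,m}(Q_0)$ applied for every $m$. Moreover, every point $Q$ in this common orbit still satisfies the hypothesis of Theorem~\ref{stable manifold theorem}(2). This is essentially the same compositional reasoning behind Lemma~\ref{common orbit}: since $Q$ corresponds under $F_\lambda$ to a common point on $\mathbb S \subset S_\lambda$, the forward orbit of $Q$ under any tail sequence drawn from $\mathcal A$ agrees for $\lambda$ and $0$.

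Given $\epsilon > 0$ and $n_0 \in \mathbb N$, I would then fix an auxiliary $\epsilon' > 0$ (to be specified later) and apply Theorem~\ref{stable manifold theorem}(2) at each of the finitely many points of $\text{Orb}_0(Q_0)$, taking the minimum of the resulting thresholds to obtain some $\lambda''(\epsilon') \in (0,\lambda_3)$ with
\[
\sup_{\substack{Q \in \text{Orb}_0(Q_0) \\ k \in \mathbb N}} \text{dist}_H\bigl(W^s_{\lambda,\text{loc}}(Q,k),\, W^s_{0,\text{loc}}(Q,k)\bigr) < \epsilon'
\]
for all $\lambda \in [0,\lambda''(\epsilon')]$. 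The remaining task is to transport this closeness through $f^{-1}_{\lambda,n,m}$, uniformly in $m \in \mathbb N$ and $n \in \{0,\ldots,n_0\}$. Writing $f^{-1}_{\lambda,n,m} = \hat T^{-1}_{\lambda,a_m} \circ \cdots \circ \hat T^{-1}_{\lambda,a_{n+m}}$, this is a composition of at most $n_0+1$ diffeomorphisms drawn from the finite family $\{\hat T^{-1}_{\lambda,a} : a \in \mathcal A\}$. By Lemma~\ref{extension}, $\hat T_{\lambda,a} \to \hat T_{0,a}$ in $C^2$ as $\lambda \to 0$, uniformly over $a \in \mathcal A$, so there is a modulus $\omega(\lambda) \to 0$ such that
\[
d_{C^0}\bigl(f^{-1}_{\lambda,n,m},\, f^{-1}_{0,n,m}\bigr) < \omega(\lambda)
\]
uniformly in $m \in \mathbb N$ and $n \le n_0$, with a uniform Lipschitz bound $L$ on $f^{-1}_{0,n,m}$ depending only on $n_0$ and $\mathcal A$. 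Combining these two ingredients yields
\[
\text{dist}_H\bigl(f^{-1}_{\lambda,n,m}W^s_{\lambda,\text{loc}}(Q,k),\, f^{-1}_{0,n,m}W^s_{0,\text{loc}}(Q,k)\bigr) \le L\epsilon' + \omega(\lambda) D
\]
where $D$ is a uniform diameter bound on the local stable manifolds. Choosing $\epsilon' < \epsilon/(2L)$ and then $\lambda' \le \lambda''(\epsilon')$ so small that $\omega(\lambda)D < \epsilon/2$ for $\lambda \le \lambda'$ completes the argument.

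The main obstacle is keeping every constant uniform over $m \in \mathbb N$; a priori the shift index $m$ is unbounded. This is avoided for exactly three reasons acting together: the alphabet $\mathcal A$ is finite, the number of composed preimage maps is capped at $n_0+1$, and the $C^2$ convergence $\hat T_{\lambda,a} \to \hat T_{0,a}$ in Lemma~\ref{extension} is uniform over $a \in \mathcal A$. As a result, every Lipschitz bound, $C^1$ perturbation bound, and Hausdorff closeness bound appearing above depends only on $\mathcal A$, $n_0$, $\epsilon'$ and $\lambda$, and not on the particular index $m$.
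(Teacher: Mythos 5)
Your proposal is correct and follows essentially the same route as the paper: both arguments rest on the finiteness of the family $\left\{f^{-1}_{\lambda,n,m} : n\le n_0,\ m\in\mathbb N\right\}$ (finite alphabet plus bounded composition length), the resulting uniform $C^0$-closeness of $f^{-1}_{\lambda,n,m}$ to $f^{-1}_{0,n,m}$ coming from the $C^2$-convergence in Lemma \ref{extension}, and Theorem \ref{stable manifold theorem}(2) applied at the finitely many orbit points. Your explicit Lipschitz/triangle-inequality bookkeeping (the factor $D$ is not even needed, since $\text{dist}_H\left(f^{-1}_{\lambda,n,m}(A), f^{-1}_{0,n,m}(A)\right)\le \omega(\lambda)$ directly) is just a quantitative version of the paper's appeal to equicontinuity of the finite family.
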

\begin{proof}
Since for each $\lambda$, the collection of maps $\bigcup_{n=1}^{n_0} \bigcup_{m\in\mathbb{N}} \left\{f^{-1}_{\lambda,n,m}\right\}$ is finite, it is equicontinuous. In addition, since the map $g\mapsto g^{-1}$ is continuous over the space $\text{Diff}^{2}\left(\mathbb T^2\right)$, since each map $\hat T_{\lambda, a_n}$ is a $C^2$ perturbation of $\hat T_{0,a_n}$, and again since our collection of maps is finite, we can make $f_{\lambda, n, m}^{-1}$ and $f_{0,n,m}^{-1}$ arbitrarily close in the $C^0$ norm, where the closeness is uniform in $m\in\mathbb N$ and $n\leq n_0$, for a sufficiently small range of $\lambda$. Together with (2) of Theorem \ref{stable manifold theorem}, the result follows.
\end{proof}
We note that while the result above is true for arbitrary $k$, the sets only have dynamical meaning for certain values of $k$. What we notice is that starting with any $m$, we have the nested sets
\begin{align*}
    &W_{\lambda,\text{loc}}^s(Q,m)\\
    &\subset \hat T_{\lambda, a_m}^{-1}W_{\lambda,\text{loc}}^s\left(\hat T_{\lambda, a_{m}}(Q),m+1\right)=f_{\lambda,0,m}^{-1}W^s_{\lambda,\text{loc}}(f_{\lambda,0,m}(Q), m+1)\\
    &\subset \hat T_{\lambda, a_m}^{-1}\circ \hat T_{\lambda, a_{m+1}}^{-1} W_{\lambda,\text{loc}}^s\left(\hat T_{\lambda, a_{m+1}}\circ \hat T_{a_m}^{-1} (Q),m+2\right)=f^{-1}_{\lambda,1,m}W^s(f_{\lambda,1,m}(Q),m+2)\\
    &\subset \cdots\\
    &\subset \hat T_{\lambda, a_{m}}^{-1}\circ\cdots \circ \hat T_{\lambda, a_{m+n_0}}^{-1}W_{\lambda,\text{loc}}^s\left(\hat T_{\lambda, a_{m+ n_0}}\circ\cdots\circ\hat T_{\lambda, a_{m+1}}(Q),m+n_0+1\right)\\
    &=f^{-1}_{\lambda,n_0,m}W^s(f_{\lambda,\tilde n,m}(Q),m+n_0+1)\\
    &\subset \cdots\\
    &\subset W^s_{\lambda}(Q,m)
\end{align*}
and Corollary \ref{perturbations of stable manifolds} gives that for a small range of $\lambda$, depending on $n_0$, these sets are $\epsilon$-close to the corresponding sets for $\lambda=0$, and this is true for any $Q\in \text{Orb}(Q_0)$. That is, arbitrarily long pieces of the stable manifolds $W^s_{\lambda}(Q,m)$ containing $Q$ can be made arbitrarily close to pieces of $W^s_{0}(Q,m)$ containing $Q$, uniformly in $m$, provided the pieces are of uniform size, for a sufficiently small range of $\lambda$.

\subsection{Rectangle Construction}\label{Particular Rectangle Construction}
\emb{We will now construct a specific family of dynamical rectangles that contain each neighborhood of $O_{r_0}$.}
Recall that by \emb{Lemma} \ref{common orbit}, for $Q_0=\left(\frac{1}{4},\frac{1}{4}\right)$, we have that 
$$f_{\lambda, n, m}(Q_0)=f_{0, n, m}(Q_0)$$
for each $n,m\geq 0$ and $\lambda\in[0,\lambda_2]$, and \emb{as pointed out before Lemma \ref{lower constant and length}}, $\text{Orb}_{0}(Q_0)$ is a finite set and bounded away from each $Q_i$ by at least $\frac{1}{4}$. 

Now, for each $i=1,\dots, 4$, $m\in\mathbb N$, and $\lambda\in[0,\lambda_3]$, we will construct a family of rectangles containing $Q_i$ that have stable sides belonging to $W^s_\lambda(Q_0, m)$ and also have controlled size.

The idea is that for each $i=1,\dots, 4$ and $m\in \mathbb N$, we will find a rectangle $R$ that contains $B_{r_0}(Q_i)$ and has stable sides belonging to $W^s_{\lambda}(Q_0, m+1)$, and such that any segment from $\hat L_{\lambda}$ that intersects $B_{r_0}(Q_i)$, under the map $f_{\lambda, m, 0}$,  must also intersect the stable sides of the rectangle $R$. This means that for future iterates, the points that intersected the stable boundaries of $R$ will move away from the points $Q_i$.

This dynamical behavior will allow us to construct a Cantor set contained in $\mathcal C\left(\lambda, \emb{\tilde O_{r(\rho)}}\right)$ that has \emb{Hausdorff dimension greater than $\frac{\log 2}{\log\left(2+\frac{1}{M}\right)}$.}
\subsubsection{Base Set of Rectangles}

The construction of our rectangles will be done in an inductive process, so we start by constructing a base set of rectangles.

Recalling the constant $C^*$ from (\ref{upper constant}) and $C^{**}$ from Lemma \ref{lower constant and length}, we have (see Figure \ref{Outer Rectangle Figure})

\begin{lemma}\label{Outer Rectangle}
    There is a $\lambda_4\in(0,\lambda_3)$ such that for each $\lambda\in[0,\lambda_4]$, $m\in\mathbb{N}$, and $i=1,\dots, 4$, there is a dynamical rectangle $R_m(\lambda, Q_i)$ centered at $Q_i$ with respect to $\tilde K(Q_i)$ such that
    \begin{itemize}
        \item[1.)] $\partial^sR_m(\lambda, Q_i)\subset W^s_{\lambda}(Q_0,m)$ and in particular, $\partial^s_+R_m(\lambda, Q_i)$ connects opposite boundaries of $\tilde K_{+}(Q_i)$ and $\partial^s_-R_m(\lambda, Q_i)$ connects opposite boundaries of $\tilde K_{-}(Q_i)$\emb{.}
        \item[2.)] For any $P\in \partial^sR_m(\lambda, Q_i)$, we have $|P-Q_i|<\frac{C^*}{2}$\emb{.}
        \item[3.)] If $P\in \partial^sR_m(\lambda, Q_i)$, then
       $$\min\left\{\text{dist}\left(P, \bigcup_{i=1}^4\{Q_i\}\right), \text{dist}\left(f_{\lambda, n, m}(P), \bigcup_{i=1}^4\{Q_i\}\right)\right\}\geq \frac{C^{**}}{2}$$
for any $n\geq 0$\emb{.}

    \end{itemize}
   
\end{lemma}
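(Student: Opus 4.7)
The plan is to first construct $R_m(0,Q_i)$ using the linear structure of $W^s_0(Q_0,m)$ with strict inequalities in all three properties, and then obtain $R_m(\lambda,Q_i)$ for small $\lambda>0$ by a uniform perturbation argument via Corollary \ref{perturbations of stable manifolds}. For $\lambda=0$, by Lemma \ref{slope of linear stable manifolds} and the remark after Theorem \ref{stable manifold theorem}, $W^s_0(Q_0,m)$ is the line through $Q_0$ of slope $-1/[a_m,a_{m+1},\ldots]$. Let $\tilde L_m$ denote the segment of this line centered at $Q_0$ of length $\ell:=(\mu_1-\delta)^{\tilde n}/10$. By the choice of $\tilde n$ we have $\ell>l^*$, so Lemma \ref{upper constant and length} applies and produces segments $L^\pm_i\subset \tilde L_m\cap \mathbb B_i$ centered at $Q_i$ with respect to $\tilde K(Q_i)$, with $|P-Q_i|<\cos(\theta)C^*/4$ for $P\in (L^+_i\cup L^-_i)\cap \tilde K(Q_i)$. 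Since $\mu_2+\delta\geq \mu_1-\delta\geq 1$, we also have $\ell\leq l^{**}$, so Lemma \ref{lower constant and length} yields $\text{dist}(\tilde L_m,Q_j)\geq C^{**}$ for each $j$.

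I would define $R_m(0,Q_i)$ by taking $\partial^s_\pm R_m(0,Q_i)$ to be the sub-curve of $L^\pm_i$ lying in $\overline{\tilde K_\pm(Q_i)}$, and closing the rectangle with two non-stable sides placed along the boundary rays of $\tilde K(Q_i)$ through $Q_i$ (which are transversal to $\mathcal K^s$ since $K\supsetneq K^u$ is disjoint from $K^s$). Property (1) is immediate, Property (2) follows from Lemma \ref{upper constant and length}(2), and for Property (3) one observes that $f_{0,n,m}(\tilde L_m)$ is a segment of $W^s_0(f_{0,n,m}(Q_0),m+n+1)$ centered at $f_{0,n,m}(Q_0)\in \text{Orb}_0(Q_0)$ of slope $-1/[a_{m+n+1},\ldots]$; by Lemma \ref{lengths of curves} its length is at most $(\mu_1-\delta)^{-(n+1)}\ell\leq l^{**}$, so Lemma \ref{lower constant and length} gives $\text{dist}(f_{0,n,m}(\tilde L_m),Q_j)\geq C^{**}$ for every $n\geq 0$ and every $j$. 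Crucially, all three inequalities are strict, leaving room for perturbation.

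For $\lambda>0$, I would invoke Corollary \ref{perturbations of stable manifolds} with $Q_0=(1/4,1/4)$, whose $\lambda$-orbit coincides with the $\lambda=0$ orbit by Lemma \ref{common orbit}. Fix $\epsilon>0$ small enough that $\cos(\theta)C^*/4+\epsilon<C^*/2$, $C^{**}-\epsilon>C^{**}/2$, and small enough for the approximating $C^1$ curves to still cross opposite boundaries of $\tilde K_\pm(Q_i)$ transversally. Choose $n_0$ so that pulling back the local stable manifolds at points of $\text{Orb}_0(Q_0)$ by $n_0$ steps contains the target segment $\tilde L_m$ uniformly in $m$. The Corollary then yields some $\lambda_4\in(0,\lambda_3)$ such that for $\lambda\in[0,\lambda_4]$ one can find sub-curves of $W^s_\lambda(Q_0,m)$ within $\epsilon$ Hausdorff distance of $\tilde L_m$, uniformly in $m$, and likewise for the first $n_0$ forward iterates. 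The perturbed analogues of $L^\pm_i$ inside $\overline{\tilde K_\pm(Q_i)}$ serve as $\partial^s_\pm R_m(\lambda,Q_i)$, giving Properties (1), (2), and Property (3) for $0\leq n\leq n_0$.

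The main obstacle is Property (3) for $n>n_0$, where the Corollary no longer directly applies. The resolution is dynamical: by Lemma \ref{lengths of curves}, $f_{\lambda,n,m}(\partial^s R_m(\lambda,Q_i))$ has length at most $(\mu_1-\delta)^{-(n+1)}$ times the uniformly bounded length of the original stable boundary. Since $\mu_1-\delta>1$, one can choose $n_1$ (independent of $\lambda$ and $m$) so that this length falls below $\tfrac{1}{4}-C^{**}/2$; then for $n\geq n_1$ the image is contained in a ball of that radius around $f_{\lambda,n,m}(Q_0)=f_{0,n,m}(Q_0)\in \text{Orb}_0(Q_0)$, which sits at distance at least $\tfrac{1}{4}$ from every $Q_j$. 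Replacing $n_0$ by $\max\{n_0,n_1\}$ in the Corollary invocation bridges the two regimes and completes the proof.
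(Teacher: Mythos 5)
Your overall strategy is the paper's: build the stable sides at $\lambda=0$ from the linear stable manifold through $Q_0$ using Lemma \ref{upper constant and length} and Lemma \ref{lower constant and length} with strict constants ($C^*/4$, $C^{**}$), then pass to small $\lambda$ uniformly in $m$ via Lemma \ref{common orbit} and Corollary \ref{perturbations of stable manifolds}. However, two points as written do not go through. First, your non-stable sides ``along the boundary rays of $\tilde K(Q_i)$ through $Q_i$'' do not close up a curvilinear rectangle: each such side is a segment of a boundary line of $\tilde K(Q_i)$ passing through $Q_i$, so the two non-stable sides intersect at $Q_i$ and the region they bound together with $\partial^s_{\pm}$ is the truncated cone itself, pinched at $Q_i$ (a ``bowtie''), which is not a dynamical rectangle and contains no full neighborhood of $Q_i$. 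The paper instead joins corresponding endpoints of the two stable curves by straight chords transversal to $\mathcal K^s$, which is what is needed both here and for property (4) of Lemma \ref{inner rectangle construction} later.

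Second, your treatment of property (3) for $\lambda>0$ has a genuine gap. For $n>n_0$ you argue that $f_{\lambda,n,m}(\partial^s R_m(\lambda,Q_i))$ is short and therefore ``contained in a ball of that radius around $f_{\lambda,n,m}(Q_0)$'' --- but shortness of the image of the boundary says nothing about its location, since the boundary does not contain $Q_0$; you must instead contract the segment of $W^s_{\lambda}(Q_0,m)$ joining the boundary to $Q_0$ (its length is uniformly bounded because the boundary is $\epsilon$-close to $\tilde L_m$), or, as the paper does, take the boundary inside the depth-$\tilde n$ pullback of $W^s_{\lambda,\text{loc}}$ and use the defining $\eta=\tfrac{1}{10}$ property of the local stable manifold, which keeps all iterates after time $\tilde n$ within $\tfrac{1}{10}$ of $\text{Orb}(Q_0)$, hence at distance at least $\tfrac14-\tfrac1{10}>C^{**}$ from every $Q_j$. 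Relatedly, ``replacing $n_0$ by $\max\{n_0,n_1\}$ in the Corollary invocation'' does not bridge the regimes as stated: Corollary \ref{perturbations of stable manifolds} controls pullback sets, and once the pullback depth exceeds $\tilde n$ the unperturbed intermediate sets for small $n$ have length larger than $l^{**}$, so Lemma \ref{lower constant and length} no longer keeps those whole sets $C^{**}$-far from the $Q_j$ (by Lemma \ref{estimate for irrational lines}(1) long segments come close to every rational point). For the early iterates you need a finite-time pointwise comparison of $f_{\lambda,n,m}$ with $f_{0,n,m}$ applied to the forward images of $\tilde L_m$ (choosing $\epsilon$ \emph{after} fixing the depth, to beat the Lipschitz factor $(\mu_2+\delta)^{n_0+1}$), or simply the paper's depth-$\tilde n$ setup. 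These defects are repairable with the tools you already cite, but as written property (3) for $\lambda>0$ is not established.
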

\begin{proof}
We break the proof into steps:\\
\\
\noindent\textbf{Step 1:} We first construct the stable sides of the rectangles for $\lambda=0.$ 

We know that $W^s_0(Q_0,m)$ is a line through $Q_0$ of slope $-\frac{1}{[a_m,a_{m+1},\dots]}$ \emb{(see Lemma \ref{slope of linear stable manifolds})} and we know from Lemma \ref{lengths of curves} that 
$$\text{length}\left(f^{-1}_{0,\tilde n, m}\left(W^s_{0, \text{loc}}(f_{0, \tilde n, m}(Q_0), m+\tilde n+1) \right)\right)\geq l^*,$$
where $\tilde n$ was chosen right after Lemma \ref{upper constant and length}.

From Lemma \ref{upper constant and length}, we know that $f^{-1}_{0,\tilde n, m}\left(W^s_{0, \text{loc}}(f_{0, \tilde n, m}(Q_0), m+\tilde n+1) \right)$ has two segments, denoted by $\gamma^{+}_{i,m}$ and $\gamma^{-}_{i.m}$, that connect the opposite lines that make up $\partial ^u\mathbb{B}_i$ and are centered at $Q_i$ with respect to $\tilde K(Q_i)$, where $\gamma^{\pm}_{m,i}\cap \tilde K_{\pm}(Q_i)\neq \emptyset$. In addition, these segments have distance from $Q_i$ less than $\cos(\theta)\frac{C^{*}}{4}$, where $\theta$ is the angle between boundary lines of the cone $\tilde K(Q_i)$. We verify these segments satisfy certain estimates:
 
Let $\bullet\in\{+,-\}$, $P\in\gamma^{\bullet}_{i,m}\cap \tilde K(Q_i)$, and choose $P'\in \gamma^{\bullet}_{i,m}\cap \tilde K(Q_i)$ to be the point closest to $Q_i$. Then, the line segment connecting $P'$ and $Q_i$ is normal to $\gamma_{i,m}^{\bullet}$, and hence contained in $\tilde K^u(Q_i)$. Therefore,

\begin{equation}\label{linear base rectangle upper estimate}
    |P-Q_i|\leq  \cos(\theta)^{-1}|P'-Q_i|<\frac{C^{*}}{4}.
\end{equation}
Suppose that $P\in \gamma^{+}_{i,m}\cup \gamma^{-}_{i,m}$ and let $n\geq 0$. From Lemma \ref{lower constant and length}, we know $|P-Q_i|\geq C^{**}$ since 
$$\text{length}\left(f^{-1}_{0,\tilde n, m}\left(W^s_{0, \text{loc}}(f_{0, \tilde n, m}(Q_0), m+\tilde n+1) \right)\right)\leq \frac{(\mu_2+\delta)^{\tilde n+1}}{10}<l^{**},$$
from Lemma \ref{lengths of curves}.

Notice that $f_{0,\tilde n,m}(P)\in W^s_{0, \text{loc}}(Q_0, m+\tilde n+1)$, and so if $n\geq \tilde n$, then $f_{\lambda, n, m}(P)\in B_{\frac{1}{10}}(Q_0)$ so that its distance from $\bigcup_{i=1}^4 \{Q_i\}$ is greater than $\frac{3}{20}\emb{=}\frac{1}{4}-\frac{1}{10}$.

  If $n< \tilde n$, then  $f_{0,  n, m}(P)\in f^{-1}_{0, \tilde n-n,m+n +1} W^s_{0,\text{loc}}(f_{0,\tilde n, m}(Q_0), \tilde n+m+1)$, where $f^{-1}_{0, \tilde n-n,m+n +1} W^s_{0,\text{loc}}(f_{0,\tilde n, m}(Q_0), \tilde n+m+1)$ is a line segment contained in $W^s_{0}(f_{0,n, m}(Q_0), m+n+1)$ that has slope $-\frac{1}{[a_{m+n+1}, a_{m+n+2},\dots]}$, contains $f_{0,n,m}(Q_0)\in\text{Orb}_{0,m}(Q_0)$, and has length no greater than $\frac{(\mu_2+\delta)^{(\tilde n-n)+1}}{10}<l^{**}$. It follows from Lemma \ref{lower constant and length} that 

\begin{equation}\label{linear rectangle lower estimate}
   \min\left\{\text{dist}\left(P, \bigcup_{i=1}^4\{Q_i\}\right), \text{dist}\left(f_{0, n, m}(P), \bigcup_{i=1}^4\{Q_i\}\right)\right\}\geq C^{**}
\end{equation}
for all $n\geq 0$.\\
\\
\noindent\textbf{Step 2:} We now construct the stable sides of the rectangles for a small range of $\lambda$. 

For $\lambda=0$ and $\bullet\in\{+,-\}$, we let $\psi_{0,i,m}^{\bullet}$ be the segment of $\gamma^{\bullet}_{i,m}$ that connects opposite boundaries of $\tilde K_{\bullet}(Q_i)$.

From Corollary \ref{perturbations of stable manifolds} \emb{and Lemma \ref{common orbit}},  the quantity
\begin{equation}\label{closeness of finite pieces of stable manifolds}
    \sup\limits_{\substack{k\in\mathbb{N} \\ 0\leq n \leq \tilde n \\ m\in\mathbb{N}}}\emb{\text{dist}_H}\left( f^{-1}_{\lambda, n, m} W^s_{\lambda, \text{loc}}(Q, k),  f^{-1}_{0, n, m} W^s_{0, \text{loc}}(Q, k)\right)
\end{equation}

can be made arbitrarily small for a sufficiently small range of $\lambda$ for all $Q\in\text{Orb}_{\lambda}(Q_0)$.

Then in particular, if $f^{-1}_{\lambda,\tilde n, m}\left(W^s_{\lambda, \text{loc}}(Q_0, m+\tilde n+1) \right)$ and $f^{-1}_{0,\tilde n, m}\left(W^s_{0, \text{loc}}(Q_0, m+\tilde n+1) \right)$ are close enough, there will be a curve $\gamma^{\bullet}_{\lambda, i,m}\subset f^{-1}_{\lambda,\tilde n, m}\left(W^s_{\lambda, \text{loc}}(Q_0, m+\tilde n+1) \right)$ that exhibits a sub-curve $\psi^{\bullet}_{\lambda, i,m}\subset \gamma^{\bullet}_{\lambda, i,m}$ such that $\psi^{\bullet}_{\lambda, i,m}$ connects opposite boundaries of $\tilde K_{\bullet}(Q_i)$. Using estimates (\ref{linear base rectangle upper estimate}) and (\ref{linear rectangle lower estimate}), we may find a $\lambda_4\in(0,\lambda_3)$ such that for all $\lambda\in[0,\lambda_4]$, we have

\begin{itemize}
    \item For any $P\in \psi_{\lambda,i,m}^+\cup \psi_{\lambda,i,m}^{-} $, we have $|P-Q_i|<\frac{C^*}{2}$
    \item For any $P\in \psi_{\lambda,i,m}^+\cup \psi_{\lambda,i,m}^{-} $, we have 
         $$\min\left\{\text{dist}\left(P, \bigcup_{i=1}^4\{Q_i\}\right), \text{dist}\left(f_{\lambda, n, m}(P), \bigcup_{i=1}^4\{Q_i\}\right)\right\}\geq \frac{C^{**}}{2}$$
         for all $n\geq 0$
\end{itemize}

\noindent\textbf{Step 3:} We now construct the rectangles. 

For each $\lambda\in[0,\lambda_4]$ we choose $R_m(\lambda, Q_i)$ to be the rectangle containing $Q_i$ such that $\partial ^s_{\bullet} R_m(\lambda,Q_i)=\psi^{\bullet}_{i,m}$ for $\bullet\in\{+,-\}$, and $\partial^{ns}R_m(\lambda,Q_i)$ is the pair of line segments that connect appropriate endpoints of these stable boundaries.
\end{proof}

\begin{figure}
    \begin{center}
        \begin{tikzpicture}[scale=1.2]
\begin{axis}[xtick=\empty, ytick=\empty, axis line style={draw=none}, tick style={draw=none}]

\addplot[name path=C, domain={-3.5:3.5}]{0.95*x};   
\addplot[name path=D, domain={-3.5:3.5}]{-0.95*x};

 \addplot[fill=black, fill opacity=0.1] fill between [of=C and D];

\draw[blue, very thick] (-1.8,0.95*-1.8)  .. controls (-2, .5) ..  (-2,-0.95*-2);
\draw[blue, very thick] (2.1, 0.95*2.1) -- (-2,-0.95*-2) node[midway, above]{\scriptsize $R_{m}(\lambda,Q_i)$};
\draw[blue, very thick] (-1.8,0.95*-1.8) -- (2,-0.95*2);
\draw[blue, very thick] (2.1, 0.95*2.1).. controls (1.7, .5) .. (2,-0.95*2) ;   

\filldraw[black] (0,0) circle (2pt);

\draw[red, dashed] (-2,.5) -- (0,0) node[midway, below]{\tiny $\leq \frac{C^*}{2}$};
\draw[red, dashed] (1.7, .5) -- (0,0) node[midway, below]{\tiny $\geq \frac{C^{**}}{2}$};
\coordinate[label=below right: \tiny $\tilde K(Q_i)$] (A) at (-2.9,-2.9*0.95);

\end{axis}

\end{tikzpicture}

    \caption{Rectangle from Lemma \ref{Outer Rectangle}.}
    \label{Outer Rectangle Figure}

    \end{center}
\end{figure}
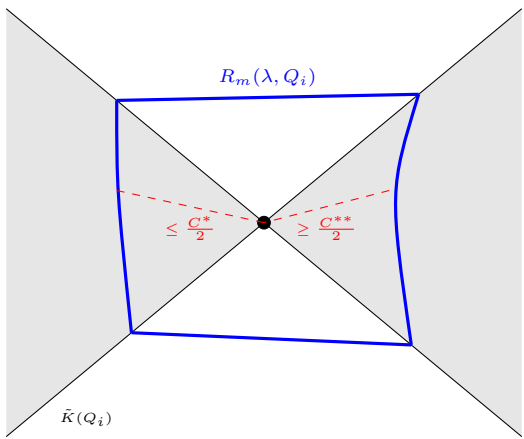

\subsubsection{Rectangles of Controlled Size}
We will now use this base set of rectangles to construct dynamical rectangles of small controlled size. We will need rectangles of smaller scale to ensure that the \emb{sub-}Cantor set of $\mathcal C(\lambda, \emb{\tilde O_{r(\rho)}})$, that we construct later on, will be \emb{thick and hence have Hausdorff dimension larger than $\frac{\log2}{\log\left(2+\frac{1}{M}\right)}$}.

The construction will come from taking the stable boundaries of rectangles from the collection $\left\{R_{\lambda}(m, Q_i)\right\}$, and iterating them under the maps $f^{-1}_{\lambda, n, m}$, which will give two new curves tangent to $\mathcal K^s$ that have segments which are close together and centered at some $Q_i$, and these curves will serve as the stable boundaries for a new rectangle. 

\begin{lemma}\label{inner rectangle construction}
    For each $\lambda \in[0,\lambda_4]$, $m\in\mathbb{N}$, $i=1,\dots,4$, and $N\in \mathbb N\cup\{ 0\}$, there is a dynamical rectangle $R_m'(\lambda, Q_i, N)$ centered at $Q_i$ with respect to $\tilde K(Q_i)$ satisfying:
    \begin{itemize}
        \item[1)] The stable boundaries $\partial^s R_m'(\lambda, Q_i, N)$ satisfy the following: 
        
        For each $\bullet\in\{+,-\}$, we have that $\partial^s_{\bullet} R_m'(\lambda, Q_i,N)$ connects opposite boundaries of $\tilde K_\bullet(Q_i)$. Moreover,
        
        $$\partial^s_{\bullet} R_m'(\lambda, Q_i, N)\subset f^{-1}_{\lambda, N,m}\left[\partial ^s_{\bullet} R_{m+N+1}\left(\lambda, f_{\lambda, N, m}(Q_i) \right)\right]$$
          and in particular
       $$\partial^s_{\bullet} R_m'(\lambda, Q_i, N)\subset \hat T_{\lambda, a_m}^{-1}\left(\partial^s_{\bullet} R'_{m+1}\left(\lambda, \hat T_{\lambda, a_m}(Q_i), N-1\right)\right).$$
      
        \item[2)] If $\kappa$ is a curve contained in $\mathbb B_i$ and tangent to $\mathcal K^u$ that connects $\partial_{+}^s R'_m(\lambda, Q_i, N)$ and $\partial_{-}^sR'_m(\lambda, Q_i, N)$, then $$\text{length}(\kappa) \leq C_1(\mu_1-\delta)^{-N} C^{*}\emb{.}$$
        Similarly, if $\kappa$ instead connects $Q_i$ and $\partial^s R'(\lambda, Q_i, N)$, for some $\bullet\in\{+,-\}$, then 
        $$\text{length}(\kappa) \leq C_1(\mu_1-\delta)^{-N} \frac{C^{*}}{2}\emb{.}$$
        \item[3)] We have 
        $$\text{dist}\left(\partial_{\bullet}^s R'_m(\lambda, Q_i, N) , Q_i\right) \geq  (\mu_2+\delta)^{-N-1}\frac{C^{**}}{2},$$ for each $\bullet\in \{+,-\}\emb{.}$

        \item[4)] The non-stable boundaries $\partial^{ns}R'_m(\lambda, Q_i, N)$ of $R'_m(\lambda, Q_i, N)$ are line segments with endpoints in $\partial \tilde K(Q_i)$. That is, $R'_m(\lambda, Q_i, N)$ has vertices belonging to $\partial \tilde K(Q_i)$, and does not intersect $\emb{\partial}\tilde K(Q_i)$ otherwise.

    \end{itemize}
\end{lemma}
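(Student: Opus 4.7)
The plan is to argue by induction on $N\ge 0$, with $m\in\mathbb N$, $i\in\{1,\ldots,4\}$, and $\lambda\in[0,\lambda_4]$ arbitrary. Since $\hat T_{\lambda,a}=\hat T_{0,a}=A_a$ on $O_{r_0/2}$ (by Lemma \ref{extension} together with (\ref{fixed inner ball radius})), and since each $A_a$ permutes the set $\{Q_1,\ldots,Q_4\}$, the iterated image $f_{\lambda,N,m}(Q_i)$ is always some $Q_j$, so that the base rectangles $R_{m+N+1}(\lambda,f_{\lambda,N,m}(Q_i))$ of Lemma \ref{Outer Rectangle} are available. The stable boundary $\partial^s_\bullet R'_m(\lambda,Q_i,N)$ will be constructed as the sub-curve through $Q_i$ of $\hat T_{\lambda,a_m}^{-1}\bigl(\partial^s_\bullet R'_{m+1}(\lambda,\hat T_{\lambda,a_m}(Q_i),N-1)\bigr)$ that connects opposite boundaries of $\tilde K_\bullet(Q_i)$; for the base case $N=0$ we instead pull back $\partial^s_\bullet R_{m+1}(\lambda,\hat T_{\lambda,a_m}(Q_i))$ from Lemma \ref{Outer Rectangle}. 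The non-stable boundary $\partial^{ns}R'_m(\lambda,Q_i,N)$ is then defined as the pair of line segments in $\partial\tilde K(Q_i)$ joining the matching endpoints of $\partial^s_+R'_m$ and $\partial^s_-R'_m$, which makes (1) and (4) immediate (with (1) for general $N$ obtained by unrolling the inductive formula).

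For item (2), given a curve $\kappa$ tangent to $\mathcal K^u$ connecting $\partial^s_+R'_m$ and $\partial^s_-R'_m$, its image $\hat T_{\lambda,a_m}(\kappa)$ is again tangent to $\mathcal K^u$ by Property (C) and connects the corresponding stable sides of $R'_{m+1}(\lambda,\hat T_{\lambda,a_m}(Q_i),N-1)$; by induction its length is at most $C_1(\mu_1-\delta)^{-(N-1)}C^*$, and Lemma \ref{lengths of curves} then gives
\[
\text{length}(\kappa)\le (\mu_1-\delta)^{-1}\,\text{length}(\hat T_{\lambda,a_m}(\kappa))\le C_1(\mu_1-\delta)^{-N}C^*.
\]
The base case $N=0$ uses the same contraction step applied to the bound $\text{length}(\hat T_{\lambda,a_m}(\kappa))\le C_1C^*$ coming from Lemma \ref{curve length compared to distance} and the upper bound $C^*/2$ from Lemma \ref{Outer Rectangle}(2); the variant with $Q_i$ in place of one of the stable sides is handled identically with the factor $C^*/2$. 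For item (3), if $P\in\partial^s_\bullet R'_m(\lambda,Q_i,N)$, then the mean value inequality together with the uniform bound $\|D\hat T_{\lambda,a_m}\|\le\mu_2+\delta$ on $K^s$ (and cone preservation) yields
\[
|P-Q_i|\ge (\mu_2+\delta)^{-1}\,|\hat T_{\lambda,a_m}(P)-\hat T_{\lambda,a_m}(Q_i)|\ge (\mu_2+\delta)^{-(N+1)}\tfrac{C^{**}}{2},
\]
by the induction hypothesis (with the base case using Lemma \ref{Outer Rectangle}(3) applied at the single-step pullback).

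The main obstacle is verifying the overflow step used to define $\partial^s_\bullet R'_m(\lambda,Q_i,N)$: that the pullback under $\hat T_{\lambda,a_m}^{-1}$ of a stable-type curve connecting opposite boundaries of $\tilde K_\bullet(\hat T_{\lambda,a_m}(Q_i))$ actually contains a sub-curve through $Q_i$ that connects opposite boundaries of $\tilde K_\bullet(Q_i)$. I would handle this by adapting Lemma \ref{linear overflow} to stable curves and the inverse map: since $\hat T_{\lambda,a_m}^{-1}$ preserves $\mathcal K^s$ and expands stable vectors by at least $\mu_1-\delta\ge\sqrt 2-0.01>2/\sqrt 3$, the same length/expansion argument gives the overflow statement for the linear model $A_{a_m}^{-1}$, and the $C^1$-closeness of $\hat T_{\lambda,a_m}$ to $\hat T_{0,a_m}$ transfers it to the nonlinear setting, just as in the second half of the proof of Lemma \ref{linear overflow}. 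The induction hypothesis on (3) at stage $N-1$, ensuring $\partial^s R'_{m+1}$ stays uniformly away from $\hat T_{\lambda,a_m}(Q_i)$, guarantees that the pulled-back curve reaches past the cone boundaries of $\tilde K_\bullet(Q_i)$ on both sides of $Q_i$, giving the required sub-curve.
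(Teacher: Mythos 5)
Your overall scheme is the paper's: induction on $N$, defining $\partial^s_\bullet R'_m(\lambda,Q_i,N)$ as a piece of $\hat T_{\lambda,a_m}^{-1}\bigl(\partial^s_\bullet R'_{m+1}(\lambda,\hat T_{\lambda,a_m}(Q_i),N-1)\bigr)$, straight non-stable sides inside $\partial\tilde K(Q_i)$ for (4), and the cone-hyperbolicity estimates of Lemma \ref{lengths of curves} for (2) and (3). Your handling of (2), (3), (4) is essentially sound (for (3) you invoke a global Lipschitz bound $\mu_2+\delta$ for $\hat T_{\lambda,a_m}$, which is not literally part of Property (C) but can be arranged; the paper instead joins $Q_i$ to the stable side by a curve tangent to $\mathcal K^u$ and applies Lemma \ref{lengths of curves}), and your base case, though different from the paper's choice $R'_m(\lambda,Q_i,0)=R_m(\lambda,Q_i)$, is compatible with the statement.

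The gap is in the step you yourself single out. What must be shown is a cone-crossing statement: the pullback of $\partial^s_\bullet R'_{m+1}$ contains a sub-curve, lying in $\mathbb B_i$, connecting the two boundary rays of the half-cone $\tilde K_\bullet(Q_i)$. Neither ingredient you cite yields this. A stable/inverse analogue of Lemma \ref{linear overflow} concerns a curve crossing the square $\text{Box}_1(0)$ being stretched past its sides; it says nothing about crossing a cone with apex at $Q_i$. And induction hypothesis (3) points the wrong way: a lower bound on $\text{dist}\left(\partial^s R'_{m+1},\hat T_{\lambda,a_m}(Q_i)\right)$ pushes the pulled-back curve farther from $Q_i$, making it harder, not easier, for the crossing to occur inside $\mathbb B_i$; what keeps the relevant sub-curve inside the box is an upper bound of type (2). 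The fact that actually forces the crossing is induction hypothesis (1)/(4): the endpoints of $\partial^s_\bullet R'_{m+1}$ lie on $\partial\tilde K_\bullet\left(\hat T_{\lambda,a_m}(Q_i)\right)$, and the strict cone invariance $D\hat T_{\lambda,a_m}(K)\subset \text{int}(K)$ (ensured by the choice of $\beta_1$ and (\ref{Orientation for toral maps})) sends these boundary directions outside $K$, so the pulled-back endpoints sit outside $\tilde K(Q_i)$ on both sides of the half-cone and the stable curve joining them, being transversal to $K$, must cross it. The paper implements exactly this in the forward direction: the image of $\tilde K(Q_i)$ under $f_{\lambda,0,m}$ overflows $\mathbb B_j$ (Lemma \ref{linear overflow} together with (\ref{Orientation for toral maps})), one intersects $\partial^s_\bullet R'_{m+1}$ with the component $\tilde K^*$ of that image containing $Q_j$, and pulls back, with Remark \ref{lifted curve orientation} providing containment in $\mathbb B_i$ and the correct $\pm$ labelling. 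A further slip: your inequality $\mu_1-\delta\geq\sqrt 2-0.01$ is unjustified, since $\delta$ is only required to satisfy $\mu_1-\delta>1$, which need not exceed $2/\sqrt 3$; the expansion $>2/\sqrt 3$ used in the overflow argument is the one from Lemma \ref{linear overflow} for the (nearly) linear maps, not $\mu_1-\delta$.
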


\begin{proof}
Let $\lambda\in[0,\lambda_4]$.
For $N=0$, from Lemma \ref{Outer Rectangle}, we set $R'_m(\lambda, 0, Q_i)=R_m(\lambda, Q_i)$, and these rectangles satisfy the desired properties.

We will proceed inductively. Let $N\geq 1$, and suppose that we have constructed the family of rectangles $\left\{ R'_m(\lambda, Q_i, k) : m\in\mathbb{N}, i=1,\dots, 4 \right\}$ for each $k<N$ and that they satisfy the desired properties.

Let $m\in \mathbb{N}$ and $i\in\{1,\dots, 4\}$. Consider the rectangle $R'_{m+1}\left( \lambda, Q_j, N-1\right)$, where $Q_j=f_{\lambda, 0,m}(Q_i)$, which is centered at $Q_j$ with respect to $\tilde K(Q_j)$. 

Notice from (\ref{Orientation for toral maps}) and \emb{Lemma} \ref{linear overflow}, provided $\lambda$ is small, the image of the cone $\tilde K(Q_i)$, under the map $f_{\lambda, 0, m}$, overflows the box $\mathbb B_j$ (see Figure \ref{inner rectangle construction picture}). Without loss of generality, let us assume that this holds for all $\lambda\in[0,\lambda_4].$

Let us denote the component of $f_{\lambda, 0, m}\left(\tilde K(Q_i)\right)\cap \mathbb B_j$ that contains $Q_j$ by $\tilde K^*$. Then, $\tilde K^*$ is contained in $\tilde K(Q_j)$ and for each $\bullet\in\{+,-\}$, the boundary curves of $\tilde K^*\cap \tilde K_{\bullet}(Q_j)$ are tangent to $\mathcal K$ and connect $Q_j$ and a boundary line of $\partial ^s\mathbb B_j$. It follows from Remark \ref{lifted curve orientation} that $f^{-1}_{\lambda, 0, m}\left(\partial^s_{\bullet}R'_{m+1}(\lambda, Q_j, N-1)\cap \tilde K^*(Q_j)\right)$ is a curve contained in $\mathbb B_i$ that is tangent to $\mathcal K^s$ and connects the boundary lines of $\tilde K_{\bullet}(Q_i)$. 

We set $R'_m(\lambda, Q_i, N)$ to be the rectangle containing $Q_i$ such that $\partial^s_\bullet R'_m(\lambda, Q_i, N):=f^{-1}_{\lambda, 0, m}\left(\partial^s_{\bullet}R'_{m+1}(\lambda, Q_j, N-1)\cap \tilde K^*(Q_j)\right)$ for each $\bullet\in\{+,-\}$, and whose non-stable boundaries are straight line segments. 

By construction and the induction hypothesis, we have
\begin{align*}
    &\partial^s_{\bullet}R'_{m}(\lambda, Q_i, N)\subset \hat T_{\lambda, a_{m}}^{-1}\left(  \partial^s_{\bullet} R'_{m+1}(\lambda, Q_j, N-1)\right)\\
    & \subset \hat T_{\lambda, a_{m}}^{-1}\left(f^{-1}_{\lambda, N-1, m+1}\left[ \partial^s_{\bullet}R_{(m+1)+(N-1)+1}(\lambda, Q_k) \right] \right)\\
    &=f^{-1}_{\lambda, N, m}\partial^s_{\bullet}R_{m+N+1}\left(\lambda, f_{\lambda, N, m}(Q_i) \right).
\end{align*}
This establishes (1) and (4), and we now check that this rectangle satisfies (2) and (3).

Let $\bullet\in\{+,-\}$. Suppose that $\kappa$ be a curve tangent to $\mathcal K^u$ that connects $Q_i$ and $\partial^s_{\bullet}R'_m(\lambda, Q_i, N)$. Then, $f_{\lambda, 0, m}(\kappa)$ is a curve tangent to $\mathcal K^u$ that connects $Q_j$ and $\partial ^sR'_{m+1}(\lambda, Q_j, N-1)$. By (2) of Lemma \ref{lengths of curves}, the induction hypothesis (in particular property (2)), and \emb{Lemma} \ref{curve length compared to distance}, we have 
$$\text{length}(\kappa)\leq (\mu_1-\delta)^{-1}\text{length}\left(f_{\lambda, 0,m}(\kappa)\right)\leq (\mu_1-\delta)^{-1}C_1(\mu_1-\delta)^{-(N-1)}\frac{C^*}{2}$$
so that second part of (2) holds, and the first part is verified in an almost identical way. 

In a similar manner, again from (2) of Lemma \ref{lengths of curves} and the induction hypothesis (in particular property (3)), we also have  
\begin{align*}
    \text{length}(\kappa) &\geq (\mu_2+\delta)^{-1}\text{length}\left(f_{\lambda, 0.m}(\kappa)\right)\geq (\mu_2+\delta)^{-1}\text{dist}\left(\partial^s_\bullet R'_{m+1}(\lambda, Q_j, N-1), Q_j\right)\\
    &\geq (\mu_2+\delta)^{-1}\frac{C^{**}}{2}(\mu_2+\delta)^{-N-2}=(\mu_2+\delta)^{-N-1}\frac{C^{**}}{2},
\end{align*}
and since $\partial^s_{\bullet}R'_m(\lambda, Q_i, N-1)$ is tangent to $\mathcal K^s$ and $\kappa$ was arbitrary and tangent to $\mathcal K^u$, we conclude that (3) holds.
\end{proof}

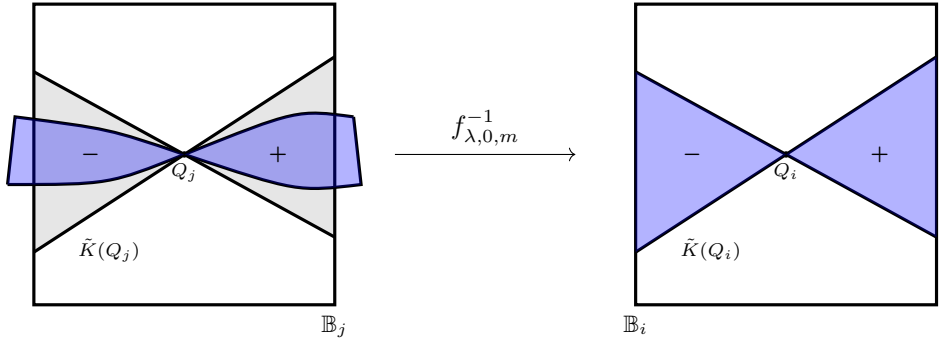
\begin{figure}[h]
   \begin{center}
    
   \begin{tikzpicture}

\filldraw[black] (-4, 0) circle (1pt) node[below]{\scriptsize $Q_j$};
\draw[black, very thick] (-6,2) rectangle (-2,-2) node[anchor=north]{\small $\mathbb B_j$};

\filldraw[black] (4, 0) circle (1pt) node[below]{\scriptsize $Q_i$};
\draw[black, very thick] (6,2) rectangle (2,-2) node[anchor=north]{\small $\mathbb B_i$};

\draw[black, very thick] (6, .65*2)-- (2,.65*-2);
\draw[black, very thick] (6, -.55*2)-- (2,-.55*-2);

\fill[blue, opacity=0.3] (6, .65*2) -- (4,0) -- (6, -.55*2)-- (6, .65*2);

\fill[blue, opacity=0.3] (2,.65*-2) -- (4,0) -- (2,-.55*-2) -- (2,.65*-2);

\draw[black, very thick] (-6, .55*2)-- (-2,.55*-2);
\draw[black, very thick] (-6, .65*-2)-- (-2,.65*2);

\fill[black, opacity=0.1] (-6, .55*2) -- (-4,0) -- (-6, -.65*2) -- (-6, .55*2);

\fill[black, opacity=0.1] (-2, .65*2) -- (-4,0) -- (-2, -.55*2) -- (-2, .65*2);

\draw[black, very thick] (-4,0) .. controls (-5,.17*2) .. (-6.25, .25*2) -- (-6.35, -.2*2);

\draw[black, very thick] (-4,0) .. controls (-5,-.2*2) ..(-6.35, -.2*2);

\fill[blue, opacity=0.3] (-4,0) .. controls (-5,.17*2) .. (-6.25, .25*2) -- (-6.35, -.2*2) .. controls (-5,-.2*2) .. (-4,0);

\draw[black, very thick] (-4,0) .. controls (-2.5,-.25*2) ..(-1.65, -.2*2) -- (-1.75, .25*2);

\draw[black, very thick] (-4,0) .. controls (-2.5,.3*2) ..(-1.75, .25*2);

\fill[blue, opacity=0.3] (-4,0) .. controls (-2.5,-.25*2) ..(-1.65, -.2*2) -- (-1.75, .25*2) .. controls (-2.5,.3*2) .. (-4,0);

\draw[->]  (-1.2,0) -- (1.2,0) node[midway, above]{$f^{-1}_{\lambda, 0, m}$};

\coordinate[label=left: \scriptsize $\boldsymbol -$] (C) at (-5,0);

\coordinate[label=right: \scriptsize $\boldsymbol +$] (C) at (-3,0);

\coordinate[label=left: \scriptsize $\boldsymbol -$] (C) at (3,0);

\coordinate[label=right: \scriptsize $\boldsymbol +$] (C) at (5,0);

\coordinate[label=below: \scriptsize $\tilde K(Q_j)$] (C) at (-5,-1);

\coordinate[label=below: \scriptsize $\tilde K(Q_i)$] (C) at (3,-1);
\end{tikzpicture}

    \caption{Illustration of the image of $\tilde K(Q_i)$ under the map $f_{\lambda, 0, m}$.}
    \label{inner rectangle construction picture}

    \end{center}
\end{figure}
We note that combining (2) and (4) of this Lemma implies that 
\begin{equation}\label{diameter of rectangles}
    \lim\limits_{N\rightarrow \infty}\text{diam}\left(R_{m}(\lambda, Q_i, N)\right)=0,
\end{equation}
where this limit is uniform in $\lambda\in[0,\lambda_4]$, $m\in\mathbb N$, and $i=1,\dots, 4$.

\subsection{Gap Estimates and Thickness}\label{thickness estimate section}
In order to establish (\ref{Hausdorff dimension of torus Cantor}), we will show that for each $\lambda\in[0,\lambda_4]$, there is a Cantor set $\mathcal C\left(\lambda, N^*\right)$ contained in $ \mathcal C\left(\lambda, \emb{\tilde O_{r(\rho)}}\right)$ that has \emb{Hausdorff dimension larger than $\frac{\log 2}{\log\left(2+\frac{1}{M}\right)}$.}

\emb{\subsubsection{Thickness of Cantor Sets}\label{Thickness of Cantor Sets Section}
A fundamental tool we will use to estimate \emb{the Hausdorff dimension of this set} is the notion of thickness for Cantor sets. Given a Cantor set $\mathcal C\subset \mathbb R$ with convex hull $I$, we say that $U\subset I\setminus \mathcal C$ is a \textit{gap} of $\mathcal C$ if it is a connected component of $I\setminus \mathcal C$. Any enumeration $\mathcal U=\{U_n\}$ of the gaps of $\mathcal C$ is called a \textit{presentation} of $\mathcal C$. Given a boundary point $u\in \mathcal C$ of some gap $U_k$, we call the connected component $B$ contained in $I\setminus\left(U_1\cup\dots\cup U_k\right) $ that contains $u$ a \textit{bridge} of $U_k$. We define 
$$\tau(\mathcal C, \mathcal U, u):=\frac{|B|}{|U_k|},$$
and set the \textit{thickness} of $\mathcal C$ to be 
$$\tau(\mathcal C):=\sup\limits_{\mathcal U}\inf\limits_{u}\tau(\mathcal C, \mathcal U, u).$$
It is related to the Hausdorff dimension of $\mathcal C$ by the inequality
\begin{equation}\label{Thickness and Hausdorff dimension}
    \text{dim}_{H}(\mathcal C)\geq \frac{\log2 }{\log\left(2+\frac{1}{\tau(\mathcal C)}\right)}.
\end{equation}
(see Chapter 4 of \cite{pt2}). It is clear that these notions carry over to a Cantor set that belongs to a $C^1$ curve $\gamma\subset \mathbb T^2$ where the lengths of gaps and bridges are measured using length relative to the curve $\gamma$.}

\subsubsection{\emb{Ordering of $\hat L_{\lambda}$}}

For each curve $\hat L_\lambda$, as it is close to a segment of $\hat L_0=\left[0,\frac{1}{2}\right]\times\{0\}$ and tangent to $\mathcal K^u$, for our range of $\lambda$, we assign a natural ordering to its elements as follows:
Let $P^*$ be the endpoint of $\hat L_\lambda$ that is closest to $(0,0)$ and $P^{**}$ its other endpoint. We see that the endpoints will be close $(0,0)$ and $\left(\frac{1}{2},0\right)$, respectively, for small $\lambda$. We say that $P< P'$ if 
$$\text{dist}_{\hat L_\lambda}\left(P,P^*\right)< \text{dist}_{\hat L_\lambda}\left(P',P^*\right).$$
With this in mind, we have the following lemma.
\begin{lemma}\label{orientation of endpoints}
    Let $I\subset \hat L_\lambda$ be a segment with endpoints $P<P'$ such that $f_{\lambda, n, m}(P)\in \tilde K_{+}(Q_i)$ and $f_{\lambda, n, m}\left(P'\right)\in \tilde K_{-}(Q_j)$ for some $i, j\in\{1,\dots, 4\}$. Then, there must be a $P_0\in \hat L_\lambda $ between $P$ and $P'$ such that $f_{\lambda, n, m}(P_0)\not\in \bigcup_{i=1}^4\mathbb B_i$.
\end{lemma}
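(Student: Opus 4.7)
The plan is to argue by contradiction. Suppose no such $P_0$ exists; then $f_{\lambda,n,m}(I)\subset\bigcup_{k=1}^{4}\mathbb B_k$. Since $I$ is a connected subarc of $\hat L_\lambda$ and $f_{\lambda,n,m}$ is continuous, $f_{\lambda,n,m}(I)$ is connected in $\mathbb T^2$. The boxes $\mathbb B_k$ are pairwise disjoint, since the centers $Q_k$ have pairwise distance at least $\tfrac12$ in $\mathbb T^2$ while each $\mathbb B_k$ has diameter at most $\tfrac{\sqrt 2}{5}$. As each $\mathbb B_k$ is closed and connected, they are precisely the connected components of $\bigcup_{k=1}^{4}\mathbb B_k$, and so $f_{\lambda,n,m}(I)\subset\mathbb B_{i_0}$ for a single index $i_0$. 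Matching the hypotheses $f_{\lambda,n,m}(P)\in\tilde K_+(Q_i)\subset\mathbb B_i$ and $f_{\lambda,n,m}(P')\in\tilde K_-(Q_j)\subset\mathbb B_j$ then forces $i=j=i_0$.

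Next, I would invoke orientation preservation. Since $\hat L_\lambda$ is tangent to $\mathcal K^u$ and is $C^2$-close to the horizontal segment $\hat L_0=\left[0,\tfrac12\right]\times\{0\}$ for small $\lambda$, the tangent vectors of $\hat L_\lambda$ in the direction of the ordering $<$ (from $P^*$ toward $P^{**}$) lie in $K^u\cap\{x>0\}\subset K_+$ at every point. Parameterize $I$ from $P$ to $P'$ consistently with this ordering. Writing $f_{\lambda,n,m}=\hat T_{\lambda,a_{m+n}}\circ\cdots\circ\hat T_{\lambda,a_m}$ and iterating (\ref{Orientation for toral maps}) along this composition, the tangent vectors of the image curve $f_{\lambda,n,m}(I)$ likewise sit in $K_+$ throughout.

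To close the argument, I would lift $f_{\lambda,n,m}(I)\subset\mathbb B_{i_0}$ via the local diffeomorphism $\phi_{\mathbf Q_{i_0}}$ of (\ref{local projection}) to a curve in $\text{Box}_{\frac{1}{10}}(\mathbf Q_{i_0})\subset\mathbb R^2$. The cone $K_+$ contains no nonzero vertical vector (since the defining generators of $K^u$, and hence of the containing cone $K$, both have strictly positive first coordinate), so in the lift the first coordinate of the parameterized curve is strictly monotone increasing from the lift of $f_{\lambda,n,m}(P)$ to the lift of $f_{\lambda,n,m}(P')$. On the other hand, $f_{\lambda,n,m}(P)\in\tilde K_+(Q_{i_0})$ translates into the first coordinate of the lift being at least that of $\mathbf Q_{i_0}$, while $f_{\lambda,n,m}(P')\in\tilde K_-(Q_{i_0})$ gives the reverse inequality. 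These two bounds are incompatible with the strict monotonicity established above, producing the desired contradiction.

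The argument is essentially a combination of topological connectedness with a one-shot use of cone orientation preservation; the only delicate point, and the main obstacle if any, is carefully lining up sign conventions so that the positively oriented tangent vectors of $\hat L_\lambda$ really do lie in $K_+$, and then translating the membership conditions $f_{\lambda,n,m}(P),f_{\lambda,n,m}(P')\in\tilde K_{\pm}(Q_{i_0})$ through the local lift $\phi_{\mathbf Q_{i_0}}^{-1}$ into the correct inequalities on the first coordinate.
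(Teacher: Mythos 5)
Your proof is correct and follows essentially the same route as the paper: the disjointness-plus-connectedness reduction to a single box is what the paper dismisses as the "clear" case $i\neq j$, and your lift-and-monotonicity argument in the first coordinate is exactly the content of Remark \ref{lifted curve orientation} combined with (\ref{Orientation for toral maps}), which is what the paper invokes for $i=j$. You have simply written out in detail the orientation-preservation step that the paper delegates to that remark.
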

\begin{proof}
If $i\neq j$, then this is clear. For $i=j$, if we parametrize $I$ as a curve with positive orientation, then this follows from Remark \ref{lifted curve orientation}.
\end{proof}
\subsubsection{Gap Constructions}
Recall from (\ref{Rectangle not too big}) we know $R'_{k}(\lambda, Q_i, N^*)\subset B_{\frac{C^{**}}{4}}(Q_i)$ for all $i=1,\dots, 4$ and $k\in\mathbb{N}$.

We will construct a Cantor set $\mathcal C\left(\lambda, N^*\right)$ that is contained in $\mathcal C\left({\lambda}, \emb{\tilde O_{r(\rho)}}\right)$ and show that it has thickness bigger than $M$: 

Consider the line $\hat L_\lambda $ and recall that it is tangent to $\mathcal K^u$ for any $\lambda\in[0,\lambda_4]$. As $\hat L_\lambda$ tends to $\hat L_0\setminus O_{r_0}$ as $\lambda\rightarrow 0$, and $B_{r_0}(0,0)\subset R_1(\lambda, (0,0))$ and $B_{r_0}\left(\frac{1}{2}, 0\right)\subset R_1\left(\lambda, \left(\frac{1}{2}, 0\right)\right)$, we can work in a small enough range of $\lambda$ so that $\hat L_{\lambda}$ has its least and greatest endpoints belonging to $ R_{1}(\lambda, (0,0))$ and $R_{1}\left(\lambda, \left(\frac{1}{2},0\right)\right)$ respectively. If $\lambda$ is sufficiently small, then we can assume that $\hat L_{\lambda}$ intersects the stable sides of these rectangles since this is the case for $\hat L_0$. Without loss of generality, let us assume this holds for all $\lambda\in[0,\lambda_4]$.

Then, there exist $P_0^{+},P_0^-\in \hat L_{\lambda}$ such that $P_0^+<P_0^-$ and $P_0^+\in \partial^s_+ R_{1}(\lambda, (0,0))$ and $P_0^-\in \partial^s_-R_{1}\left(\lambda, \left(\frac{1}{2},0\right)\right)$.

We notice that 
\begin{equation}\label{initial endpoint orbits}
    \text{dist}\left( f_{\lambda, k, 0}\left(P_0^\bullet\right),  \bigcup_{i=1}^4 \{Q_i\}\right)\geq \frac{C^{**}}{2}
\end{equation}
for all $k\geq 0$ and each $\bullet\in\{+,-\}$.

Suppose that $I_0$ is the segment from $P_0^{+}$ to $P_0^-$ contained in $\hat L_\lambda$, and let $k_0$ be the smallest value such that $f_{\lambda, k_0, 0}\left(I_0 \right)$ intersect $\bigcup_{i=1}^4O_{r_0}(Q_i)$. Now, we consider all of the components of $$f_{\lambda, k_0, 0}(I_0 )\cap \left(\bigcup _{i=1}^4 R'_{k_0+1}\left(\lambda, Q_{i}, N^*\right)\right).$$ The gaps of order $k_0$ will come from taking the image of such components that intersects $\bigcup_{i=1}^4 O_{r_0}(Q_i)$ non-trivially, under the map $f^{-1}_{\lambda, k_0, 0}$. 

Let $J$ be such a component, and suppose it is contained in $R'_{k_0+1}(\lambda, Q_{j}, N^*)$ for some $j=1,\dots, 4$, and $J\cap O_{r_0}(Q_j)\neq \emptyset$. Then, $J$ is the image of some segment $I_0'\subset I_0$ under the map $f_{\lambda, k_0, 0}$. We will say that $\emb{\text{int} \left(I'_0\right)}$ is a gap of order $k_0$, where the interior is taken with respect to the subspace topology on $\hat L_\lambda$. 

Since $R'_{k_0+1}\left(\lambda, Q_{j}, N^*\right)$ is centered at $Q_j$ with respect to $\tilde K(Q_j)$ and \newline $f_{\lambda, k_0, 0}\left(I_0'\right)$ is a curve tangent to $\mathcal K^u$, by \emb{Proposition} \ref{outer cone centering gives inner ball} and our discussion after Lemma \ref{inner rectangle construction}, as $f_{\lambda, k_0, 0}\left(I_0'\right)\cap O_{r_0}(Q_j)\neq \emptyset$, we must have that $f_{\lambda, k_0, 0}\left(I_0'\right)$ is fully contained in $R'_{k_0+1}(\lambda, Q_j, N^*)$ and only intersects $\partial R'_{k_0+1}(\lambda, Q_j, N^*)$ non-trivially in the event that either of its endpoints belong to $\partial^sR'_{k_0+1}(\lambda, Q_j, N^*)$. However, from (\ref{initial endpoint orbits}), and since we chose $N^*$ so large that $R'_{k_0+1}(\lambda, Q_j, N^*)\subset B_{\frac{C^{**}}{4}}$  (see (\ref{Rectangle not too big})), it must be the case that $f_{\lambda, k_0, 0}\left(I_0'\right)$ indeed connects the stable boundaries of $R'_{k_0+1}(\lambda, Q_j, N^*)$. In particular, by Lemma \ref{orientation of endpoints}, if $I_0'$ has endpoints $P_1$ and $P_2$ with $P_1<P_2$, then we have $f_{\lambda, k_0, 0}(P_1)\in \partial^s_{-}R'_{k_0+1}(\lambda, Q_j, N^*)$ and $f_{\lambda, k_0, 0}(P_2)\in \partial^s_{+}R'_{k_0+1}(\lambda, Q_j, N^*)$.

Suppose that we have constructed all of the gaps up to order $k\geq k_0$, and that they are all disjoint segments of $\hat L_{\lambda}$ of the form $\emb{\text{int}\left(I_m\right)}$ where $I_m\subset I_0$ for some $m\leq k$ such that:

\begin{itemize}
    \item $f_{\lambda, k,0}\left(I_m\right)$ is a curve contained in $R'_{m+1}( \lambda, Q_i, N^*)$, for some $i=1,\dots, 4$
    \item $I_m$ has endpoints $P_m^-<P_m^+$ so that
    $f_{\lambda, m, 0}\left(P_m^- \right)\in \partial^s_-R'_{m+1}(\lambda, Q_i, N^*) $ and $f_{\lambda, m, 0}\left(P_m^+ \right)\in \partial^s_+R'_{m+1}(\lambda, Q_i, N^*) $
    \item $f_{\lambda, m, 0}\left(I_m\right)\cap O_{r_0}(Q_i)\neq\emptyset $
\end{itemize}

Denote this collection of gaps by $\mathcal U_k$. Let $I$ be a component of $I_0\setminus \mathcal U_k$.
Now, consider the components of  $$f_{\lambda, k+1, 0}(I)\cap \left(\bigcup _{i=1}^4 R'_{k+2}(\lambda, Q_i, N^*)\right).$$ The gaps of order $k+1$ will come from taking the image of such components that intersects $\bigcup_{i=1}^4 O_{r_0}(Q_i)$ non-trivially, under the map $f^{-1}_{\lambda, k+1, 0}$. A similar application of \emb{Proposition} \ref{outer cone centering gives inner ball} and Lemma \ref{orientation of endpoints} as in the base case shows that any gap of order $k+1$ will be of the form above with index $m$ replaced with $k+1$.

     We have now constructed all orders of gaps of $\mathcal C\left(\lambda, N^*  \right)$ and shown they have satisfied a certain form. From (\ref{fixed inner ball radius}) and definition of $r_0$, we know 
 
 $$B_{r(\rho)}(Q_i)\subset B_{r_0}(Q_i)\subset R'_{k}(\lambda, Q_i, N^*)$$ for all $k\geq 0$, $i=1,\dots, 4$ and $\lambda\in[0,\lambda_4]$. Thus, it follows that 
\begin{equation}\label{Cantor set containment}
    C\left(\lambda, N^*\right)\subset C\left(\lambda, \emb{\tilde O_{r(\rho)}}\right).
\end{equation}

 \subsection{\emb{Concluding the Proof of Theorem \ref{intermediate theorem}}}\label{concluding the proof of intermediate theorem}
\emb{With the construction of the set $\mathcal C\left(\lambda, N^*\right)$, we are now ready to complete the proof of Theorem \ref{intermediate theorem}.}
 \begin{proof}[\emb{Proof of Theorem \ref{intermediate theorem}.}]
     
  Let $\lambda_0=\min\{\lambda_1, \lambda_2, \lambda_3, \lambda_4\}$ and let $\lambda\in[0,\lambda_0]$. We will show that $C\left(\lambda, N^*\right)$ has thickness larger than $M$.

With respect to the Cantor set $\mathcal C\left(\lambda, N^*\right)$ and the ordering of gaps defined above, let $U\subset I_0$ be a gap of order $k$ with endpoints $P<P'$, and $B$ a bridge of it. Without loss of generality, suppose that $P$ is an endpoint of $B$, and denote its other endpoint by $P''$. We know that $f_{\lambda, k, 0}\left( P\right)\in \partial ^s_{-}R'_{k+1}(\lambda, Q_i, N^*)$ and $f_{\lambda, k, 0}\left( P' \right)\in \partial ^s_{+}R'_{k+1}(\lambda, Q_i, N^*)$, and there must be some $Q'$ between $P$ and $P'$ such that $f_{\lambda,k,0}\left(Q'\right)\in O_{r_0}(Q_{i})$.

\begin{lemma}\label{intermediate point}
    There is a point $Q\in B$ between $P$ and $P''$  such that 
    $$\text{dist}\left( f_{\lambda, k, 0}(Q),  \bigcup_{i=1}^4 \{Q_i\}\right)\geq \frac{C^{**}}{2},$$
    and the segment $J\subset f_{\lambda,k,0}(B\cup U)$ connecting $f_{\lambda,0,k}(Q)$ and $f_{\lambda,  k,0}(P)$ is contained in $\mathbb B_i$.
\end{lemma}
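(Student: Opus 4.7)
The plan is to parameterize $\gamma := f_{\lambda, k, 0}(B)$ by $t \in [0, 1]$ with $\gamma(0) = f_{\lambda, k, 0}(P)$ and $\gamma(1) = f_{\lambda, k, 0}(P'')$, and to take $Q$ to be either the first exit point of $\gamma$ from $\mathbb B_i$ or the endpoint $P''$. The key preliminary is that because $\hat L_\lambda$ is $C^2$-close to $\hat L_0 = [0, 1/2] \times \{0\}$, its tangent in the orientation from $P_0^+$ to $P_0^-$ lies in $K^u_+$; since $B$ extends from $P = P_k^-$ in the opposite direction (toward $P_0^+$), Remark \ref{lifted curve orientation} implies the tangent to $\gamma$ in the parameterization $\gamma(0) \to \gamma(1)$ lies in $K^u_-$ throughout. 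Both generators of $K^u$ have strictly positive $V$-component, so vectors in $K^u_-$ have strictly negative $V$-component, and consequently the lift of $\gamma$ through $\phi_{\boldsymbol Q_i}^{-1}$ (whenever $\gamma$ remains in $\mathbb B_i$) has strictly decreasing $V$-coordinate.

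In the first case, $\gamma([0, 1]) \not\subset \mathbb B_i$. Setting $t^* := \inf\{t \in [0, 1] : \gamma(t) \notin \mathbb B_i\}$ and taking $Q$ to be the preimage of $\gamma(t^*)$ under $f_{\lambda, k, 0}$, the segment $J = \gamma([0, t^*])$ lies in $\mathbb B_i$, and $\gamma(t^*) \in \partial \mathbb B_i$ has Euclidean distance at least $1/10$ from $Q_i$. For $j \neq i$, the points $\{Q_1, \ldots, Q_4\}$ are pairwise at distance at least $1/2$ on $\mathbb T^2$, so $|\gamma(t^*) - Q_j| \geq 1/2 - \sqrt{2}/10$. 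Both bounds exceed $C^{**}/2$ since Lemma \ref{lower constant and length} gives $C^{**} < 1/4 - 1/10 = 3/20$.

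The main obstacle is the second case, where $\gamma([0, 1]) \subset \mathbb B_i$; here the plan is to force $P'' = P_0^+$, whence setting $Q = P''$ and invoking (\ref{initial endpoint orbits}) yields $|\gamma(1) - Q_j| \geq C^{**}/2$ for all $j$. Suppose otherwise: then $P''$ is an endpoint of another gap $U_j$ with $j < k$, necessarily $P'' = P_j^+$ from the direction of the bridge, and $f_{\lambda, j, 0}(P'') \in \partial^s_+ R'_{j+1}(\lambda, Q_{i'}, N^*)$ for some $i'$. Iterating property (1) of Lemma \ref{inner rectangle construction} from level $j + 1$ up to level $k + 1$, using that each $\hat T_{\lambda, a_m}$ agrees with the linear $\hat T_{0, a_m}$ on $O_{r_0/2} \supset \tilde O_{r(\rho)}$ (by the cutoff construction in Lemma \ref{extension}) and that $\hat T_{0, a_m}$ permutes the four $2$-torsion points $\{Q_1, \ldots, Q_4\}$, places $\gamma(1)$ on $\partial^s_+$ of a rectangle centered at some $Q_{i''}$. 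Disjointness of the $\mathbb B_j$'s together with $\gamma(1) \in \mathbb B_i$ forces $i'' = i$, putting $\gamma(1) \in \tilde K_+(Q_i)$.

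To derive the contradiction I use the monotonicity from the preliminary paragraph. The starting point $\gamma(0)$ lies on the crossing segment of $\partial^s_- R'_{k+1}(\lambda, Q_i, N^*)$ inside $\tilde K_-(Q_i)$, since the gap curve $f_{\lambda, k, 0}(U_k)$ traverses the rectangle from $\partial^s_-$ to $\partial^s_+$ through $O_{r_0}(Q_i)$ (by Proposition \ref{outer cone centering gives inner ball}); so its lift has $V$-coordinate $\leq V(\boldsymbol Q_i)$. Strict decrease then places the lift of $\gamma$ in $\{V < V(\boldsymbol Q_i)\}$ for all $t > 0$. But any point of $\tilde K_+(Q_i)$ distinct from $Q_i$ has $V$-coordinate strictly greater than $V(\boldsymbol Q_i)$, and property (3) of Lemma \ref{inner rectangle construction} guarantees that $\gamma(1)$ is at positive distance from $Q_i$, contradicting $\gamma(1) \in \tilde K_+(Q_i)$.
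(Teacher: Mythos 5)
Your Case 1 (the bridge image exits $\mathbb B_i$: take $Q$ to be the preimage of the first exit point) is fine and is close in spirit to what the paper does. The genuine gap is in Case 2. There you try to prove that if $f_{\lambda,k,0}(B)\subset\mathbb B_i$ then $P''=P_0^+$, by iterating property (1) of Lemma \ref{inner rectangle construction} ``from level $j+1$ up to level $k+1$.'' But each forward application of that property decreases the parameter $N$ by one: starting from $\partial^s_+R'_{j+1}(\lambda,Q_{i'},N^*)$, after at most $N^*$ steps the chain terminates on the stable boundary of a \emph{base} rectangle $\partial^s_+R_{j+N^*+2}(\lambda,\cdot)$, and beyond that time the orbit of $P''$ lies on forward images of a piece of $W^s_\lambda(Q_0,\cdot)$, not on the boundary of any rectangle in the family. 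So when $k-j>N^*$ your conclusion that $\gamma(1)$ lies on $\partial^s_+$ of some $R'_{k+1}(\lambda,Q_{i''},\cdot)$ is unjustified, no contradiction is available, and in fact none should be expected: in that regime $f_{\lambda,k,0}(P'')$ is merely guaranteed (by (3) of Lemma \ref{Outer Rectangle}, which controls \emph{all} forward iterates of the base-rectangle stable boundaries) to be at distance at least $\frac{C^{**}}{2}$ from every $Q_l$, and it can perfectly well still lie in $\mathbb B_i$ with $P''\neq P_0^+$. This is exactly how the paper argues: it splits according to whether the orbit of $P''$ has already passed through $\partial^s_+$ of a base rectangle at some time $k''<k$; if so and the connecting segment stays in $\mathbb B_i$, one simply takes $Q=P''$ (no appeal to (\ref{initial endpoint orbits}) and no claim that $P''$ is an initial endpoint), and otherwise one produces, via Lemma \ref{orientation of endpoints} and the re-entry through $\partial^s_-R_{k+1}(\lambda,Q_i)$, a point $Q$ on a base-rectangle stable boundary. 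Your dichotomy ``$P''=P_0^+$ or contradiction'' is therefore both unproved and stronger than what is true.

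A secondary problem, even in the sub-case $k-j\le N^*$ where the rectangle iteration does apply: the monotonicity contradiction needs $f_{\lambda,k,0}(P)$ and $f_{\lambda,k,0}(P'')$ to lie in the half-cones $\tilde K_-(Q_i)$ and $\tilde K_+(Q_i)$ (equivalently, to have $V$-coordinates on the appropriate sides of $V(\boldsymbol Q_i)$). Gap endpoints lie on the stable boundaries $\partial^s_{\pm}R'$, which are curves tangent to $\mathcal K^s$ extending \emph{outside} those half-cones; the justification you give (the gap curve passes through $O_{r_0}(Q_i)$) only localizes $f_{\lambda,k,0}(P)$ to an $r_0$-fattening of $Q_i+K_-$, so $V(\gamma(0))\le V(\boldsymbol Q_i)$ is not established, and the analogous statement for $\gamma(1)$ is not addressed at all. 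Note also that the distance from $Q_i$ to $\partial^s_+R'_{k+1}(\lambda,Q_i,N')$ is only bounded below by $(\mu_2+\delta)^{-N'-1}\frac{C^{**}}{2}$, which is smaller than $\frac{C^{**}}{2}$ for $N'\ge 1$, so in this sub-case one cannot simply take $Q=P''$ either; some version of the exit-and-re-entry argument (the paper's use of Lemma \ref{orientation of endpoints} together with $\partial^s_-R_{k+1}(\lambda,Q_i)$, or a corrected form of your $V$-monotonicity argument) is genuinely needed here, and your write-up does not supply it in a complete form.
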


\begin{proof}[\emb{Proof of Lemma \ref{intermediate point}.}]
     By the construction of our gaps, the endpoint $P''$ of $B$ satisfies $f_{\lambda, k', 0}\left(P''\right)\in \partial^sR'_{k'+1}(\lambda, Q_j, N^*)$ for some $j=1,\dots, 4$ and $k'\leq k$.

    From (1) of Lemma \ref{inner rectangle construction}, either $f_{\lambda, k,0}\left(P''\right)\in \partial^s_{+}R_{k+1}(\lambda, Q_j, N')$ for some $N'\leq N^*$ and $j=1,\dots, 4$, or there was some $k''\in\left[k',k\right)$ so that $f_{\lambda, k'',0}\left(P''\right)\in \partial^s_{+}R_{k''+1}(\lambda, Q_l, 0)$ for some $l=1,\dots, 4$. 

    In the former case, Lemma \ref{orientation of endpoints} implies that there is a point $Q^*\in B$ such that $f_{\lambda, k, 0}(Q^*)\not\in \mathbb B_i$. In this case, we notice that the segment of $f_{\lambda, k, 0}(B\cup U)$ connecting $f_{\lambda, k, 0}(Q^*)$ and $f_{\lambda, k , 0}(P')$ intersects $O_{r_0}$, and from our choice of neighborhood $O_{r_0}$ (see Section \ref{Translation to a Toral Map Problem}), this segment must intersect $ \partial^s_{-}R_{k+1}(\lambda, Q_i)$, in which case, we choose $Q$ so that $f_{\lambda, k, 0}(Q)$ is this intersection point, and we recall (3) of Lemma \ref{Outer Rectangle}.
    
    In the latter case, by (3) of Lemma \ref{Outer Rectangle}, $f_{\lambda, k,0}(P'')$ has distance from $Q_i$ greater than $\frac{C^{**}}{2}$. If the segment of $f_{\lambda, k, 0}(B\cup U)$ connecting $f_{\lambda, k, 0}(P'')$ and $f_{\lambda, k, 0}(P')$ is not contained in $\mathbb B_i$, then by the same reasoning as in the former case, there is some $Q\in B$ such that $f_{\lambda, k, 0}(Q)\in \partial^s_{-}R_{k+1}(\lambda, Q_i)$. 
\end{proof}

This \emb{lemma} gives the following estimate:
\begin{align*}
&\text{dist}_{f_{\lambda, k,0}\left(\hat L_\lambda \right)}\left(f_{\lambda,k,0}(P), f_{\lambda,k,0}(Q)\right)\geq \left|f_{\lambda,k,0}(P)-f_{\lambda,k,0}(Q)\right|\\
    &\geq \left|  f_{\lambda,k,0}(Q)-Q_{i} \right|-\left|Q_{i}-f_{\lambda,k,0}\left(Q'\right) \right|-\left| f_{\lambda,k,0}(P)-f_{\lambda,k,0}\left(Q'\right) \right|\\
    &\geq \frac{C^{**}}{2}-r_{0}-\text{dist}_{f_{\lambda, k,0}\left(\hat L_\lambda \right)}\left(f_{\lambda,k,0}(P),f_{\lambda,k,0}\left(P'\right)\right)\\
    &\geq \frac{C^{**}}{4}-\text{dist}_{f_{\lambda, k,0}\left(\hat L_\lambda \right)}\left(f_{\lambda,k,0}(P),f_{\lambda,k,0}\left(P'\right)\right).
\end{align*}
where the last inequality follows from (\ref{fixed inner ball radius}).
Therefore, since the segment $J$ is contained in $\mathbb B_i$, by Lemma \ref{bounded distortion} and (2) of Lemma \ref{inner rectangle construction}, we have
\begin{align*}
   &\frac{\text{length}(B)}{\text{length}(U)} =\frac{\text{dist}_{\hat L_\lambda}\left(P,P''\right)}{\text{dist}_{\hat L_\lambda}\left(P,P'\right)}\geq \frac{\text{dist}_{\hat L_\lambda}\left(P,Q\right)}{\text{dist}_{\hat L_\lambda}\left(P,P'\right)}\\
    &\geq C_4^{-1}\frac{\text{dist}_{f_{\lambda, k,0}\left(\hat L_\lambda \right)}\left(f_{\lambda,k,0}(P), f_{\lambda,k,0}\left(Q\right)\right)}{\text{dist}_{f_{\lambda, k,0}\left(\hat L_\lambda \right)}\left(f_{\lambda,k,0}(P), f_{\lambda,k,0}\left(P'\right)\right)}\geq C_4^{-1}\frac{\frac{C^{**}}{4}}{C_1 C^*\left(\mu_1-\delta\right)^{-\emb{N^*}}}-C_4^{-1}\\
    &\geq M,
\end{align*}
where the last inequality follows from (\ref{N large enough}). We then see that for each $\lambda\in[0,\lambda_0]$, we have 
$$\tau\left(C(\lambda, N^*)\right)\geq M$$ and from (\ref{Thickness and Hausdorff dimension}), we have
$$\text{dim}_H\left(C(\lambda, N^*)\right)\geq \frac{\log 2}{\log\left(2+\frac{1}{M}\right)}.$$
From (\ref{Cantor set containment}), this then implies (\ref{Hausdorff dimension of torus Cantor}) holds, for all $\lambda\in[0,\lambda_0]$. 
 \end{proof}

\section{\emb{Concluding the Proofs}}\label{Concluding the Proofs}

\begin{proof}[\emb{Proof of Theorem \ref{first intermediate theorem}.}]
    \emb{Fix a bounded sequence $\overline a=(a_n)$, $a_n\in\mathbb N$, and denote $\mathcal A:=\{a_n\}$. Let $\epsilon>0$ and suppose that $M>0$ is so large that
    $$\frac{\log 2}{\log\left(2+\frac{1}{M}\right)}>1-\epsilon.$$
    By Theorem \ref{intermediate theorem}, there is a $\rho>0$ and $\lambda_0\in\left(0, \min_{a\in\mathcal A}\left\{\tilde \lambda(\rho, a)\right\}\right)$ such that $\mathcal C\left(\lambda, \tilde O_{r(\rho)}\right)$ has Hausdorff dimension larger than $\frac{\log 2}{\log\left(2+\frac{1}{M}\right)}$.\newline 
\indent Set $\lambda\in(0,\lambda_0)$. Then, since $\lambda_0<\min_{a\in\mathcal A}\left\{\tilde \lambda(\rho, a)\right\}$, from Lemma \ref{connection}, we know that $\Lambda_{\lambda}^{\text{surv}}\left(\overline a\right)\cap L_{\lambda}$  is the image of $\mathcal C\left(\lambda, \tilde O_{r(\rho)}\right)$ under the map $F_{\lambda}$. Since $F$ is a local diffeomorphism, it follows that 
\begin{align*}
\text{dim}_H\left(\Lambda_{\lambda}^{\text{surv}}\left(\overline a\right)\cap L_{\lambda}\right)= \text{dim}_H\left( \mathcal C\left(\lambda, \tilde O_{r(\rho)}\right)\right)\geq 1-\epsilon. 
\end{align*}}
\end{proof}

\begin{proof}[\emb{Proof of Theorem \ref{main theorem}.}]
\emb{Let $\alpha=[a_1,a_2,\dots]\in (0,1)$ be an irrational of bounded type. Set $\overline a=(a_n).$ Let $\epsilon>0$. By Theorem \ref{first intermediate theorem}, there is a $\rho>0$ and $\lambda_0>0$ such that for any $\lambda\in[0,\lambda_0]$, we have
$$\text{dim}_H\left(\Lambda_{\lambda}^{\text{surv}}\left(\overline a\right)\cap L_{\lambda}\right)\geq 1-\epsilon.$$
Let $\lambda\in[0,\lambda_0]$. From Theorem \ref{trace map bounded orbits connection}, and since $\Lambda^{\text{surv}}_{\lambda}\left(\overline a, O_{\rho}\right)\subset \Lambda_{\lambda}^{\text{bnd}}\left(\overline a \right)$, we have 
$$\text{dim}_H\left(\sigma_{\lambda,\alpha}\right)=\text{dim}_H\left(  L_{\lambda}\cap  \Lambda_{\lambda}^{\text{bnd}}\left( \overline a \right)\right)\geq \text{dim}_H\left(\Lambda_{\lambda}^{\text{surv}}\left(\overline a\right)\cap L_{\lambda}\right)\geq 1-\epsilon.$$}
\end{proof}

\emb{
\begin{remark}
   We expect that Theorem \ref{main theorem} should hold for all irrational $\alpha\in(0,1)$. We would like to point out exactly where the assumption that $\alpha$ is of bounded type is used in our proofs. It is used in Proposition \ref{Circle rotation}, which is the main ingredient of Lemma \ref{estimate for irrational lines}, and this is what later allows us to control the distance of each piece of stable manifold $W^s_{\lambda}(Q_0, m)$ from each $Q_i$ in the construction of the rectangles in Subsection \ref{Particular Rectangle Construction}. The assumption also implies that the collection of maps $\left\{\hat T_{\lambda, a} : \lambda\in[0,\lambda_0],\  a\in\mathcal A\right\}$ from Section \ref{Main Estimates and Non-Stationary Trace Map Dynamics} have uniformly bounded $C^2$ norms, which is required for the bounded distortion estimate (see Lemma \ref{bounded distortion}). It also allows us to control the $C^2$-closeness of $\hat T_{\lambda,a}$ and $\hat T_{0,a}$ in a uniform way, which is used in many arguments such as Theorem \ref{stable manifold theorem} and Corollary \ref{perturbations of stable manifolds}.
\end{remark}}
\appendix

\section{\emb{Contraction Mappings}}
The following proposition is a non-stationary version of the contraction mapping principle.
\begin{propsec}\label{nonstationary contractions}
       Let $(X,d)$ be a complete metric space and $K\subset X$ a compact set. Suppose that there is a $\mu\in(0,1)$ and, for each $n\in\mathbb{N}$, a map $f_n:K\rightarrow K$ such that 
       $$d\left(f_n(x), f_n(y)\right)\leq \mu d(x,y),$$
       for all $x,y\in K$. Then, the following hold:

       \begin{itemize}
           \item[(1)] There is a $x^*\in K$ such that 
       $$\lim\limits_{n\rightarrow \infty}f_1\circ\cdots \circ f_n(x)=x^*,$$
       for any $x\in K$.
       \item[(2)] 
       
       For each $\epsilon>0$, there is a $\epsilon'>0$ such that if for each $n\in\mathbb N$, there is a mapping $h_n: K\rightarrow K$ with
     
       $$d\left(h_n(x), h_n(y)\right)\leq \mu' d(x,y) \ \text{and} \ \sup_{n}d_{C^0(K)}(\emb{f_n}, h_n)<\epsilon',$$
       for some $\mu'\in(0,1)$ and for all $x,y\in K$, then
       $$d\left(x^*, y^*\right)<\epsilon,$$
       where $y^*$ is the element such that 
       $$\lim\limits_{n\rightarrow \infty}h_1\circ\cdots \circ h_n(x)=y^*,$$
       for each $x\in X$.
       
       \end{itemize}
       
   \end{propsec}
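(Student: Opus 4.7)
The proposal is to adapt the standard Banach fixed point argument in two stages, treating each conclusion separately.

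For part (1), write $g_n := f_1 \circ \cdots \circ f_n$ and observe that for any $x \in K$,
$$d(g_n(x), g_{n+1}(x)) = d\bigl(f_1 \circ \cdots \circ f_n(x), f_1 \circ \cdots \circ f_n(f_{n+1}(x))\bigr) \leq \mu^n \, d(x, f_{n+1}(x)) \leq \mu^n \, \mathrm{diam}(K),$$
using the uniform contraction rate $\mu < 1$ iterated along the composition (since each $f_i$ maps $K$ into itself, the intermediate points stay in $K$ and we can apply the bound $n$ times). Since $\mu < 1$ and $K$ is bounded, $\sum \mu^n \, \mathrm{diam}(K) < \infty$, so $(g_n(x))$ is Cauchy, hence converges to some $x^* \in K$ by completeness together with the closedness of $K$ (which follows from compactness). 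To see the limit does not depend on $x$, note that for any $x, y \in K$,
$$d(g_n(x), g_n(y)) \leq \mu^n \, d(x,y) \leq \mu^n \, \mathrm{diam}(K) \longrightarrow 0,$$
so $g_n(y)$ converges to the same point $x^*$.

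For part (2), first apply the conclusion of (1) to the sequence $(h_n)$, which gives the existence of the corresponding limit $y^* \in K$; write $\hat g_n := h_1 \circ \cdots \circ h_n$. The plan is a telescoping decomposition. For $0 \leq k \leq n$, set
$$\phi_k := f_1 \circ \cdots \circ f_k \circ h_{k+1} \circ \cdots \circ h_n(x),$$
with the convention that $\phi_0 = \hat g_n(x)$ and $\phi_n = g_n(x)$. Then, writing $z_k := h_{k+1} \circ \cdots \circ h_n(x) \in K$,
$$d(\phi_{k-1}, \phi_k) = d\bigl(f_1 \circ \cdots \circ f_{k-1}(h_k(z_k)), \; f_1 \circ \cdots \circ f_{k-1}(f_k(z_k))\bigr) \leq \mu^{k-1} \, d(h_k(z_k), f_k(z_k)) \leq \mu^{k-1} \, \epsilon'.$$
Summing over $k = 1, \ldots, n$ gives the uniform bound
$$d(g_n(x), \hat g_n(x)) \leq \sum_{k=1}^n \mu^{k-1} \epsilon' \leq \frac{\epsilon'}{1-\mu}.$$
Letting $n \to \infty$ and using the convergences $g_n(x) \to x^*$ and $\hat g_n(x) \to y^*$ from part (1) gives
$$d(x^*, y^*) \leq \frac{\epsilon'}{1 - \mu}.$$
Choosing $\epsilon' < \epsilon (1 - \mu)$ at the outset therefore yields the claim.

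There is no genuine obstacle: the argument only uses the uniform contraction rate $\mu$ of $(f_n)$ and the fact that each $h_n$ preserves $K$ (the contraction rate $\mu'$ of $(h_n)$ is needed only to invoke part (1) for the existence of $y^*$, not in the comparison estimate). The mild subtlety is that one must be careful in the telescoping to contract using the $f_i$'s (whose rate $\mu$ is fixed and appears in the final choice of $\epsilon'$) rather than the $h_i$'s, so that the bound on $\epsilon'$ is independent of the perturbed family.
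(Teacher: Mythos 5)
Your proposal is correct and follows essentially the same route as the paper: for part (2) the paper's ``induction argument'' is exactly your telescoping bound $d\left(f_1\circ\cdots\circ f_n(x),\,h_1\circ\cdots\circ h_n(x)\right)\leq \sum_{k=1}^{n}\epsilon'\mu^{k-1}\leq \frac{\epsilon'}{1-\mu}$, followed by the same choice of $\epsilon'$ proportional to $\epsilon(1-\mu)$. For part (1) the paper simply cites an external reference for the Banach-fixed-point-style argument, which you have written out in full (correctly, including the key point that the later maps are applied first so the composition is a Cauchy sequence).
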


   \begin{proof}
       \begin{itemize}
           \item[(1)] \emb{The proof here is analogous to that of the Banach Fixed Point Theorem. For a proof of this exact result, see Theorem 3 in \cite{LY}.}
           \item[(2)] Set $\epsilon'=\frac{\epsilon(1-\mu)}{2}$. Then, an induction argument gives that 
           $$d\left(f_1\circ\cdots\circ f_n(x), h_1\circ\cdots\circ h_n(x)\right)\leq \sum_{k=1}^{n}\epsilon'\mu^{k-1},$$
           for any $x\in K$.
          We can deduce that 
          $$d\left(x^*,y^*\right)\leq  d\left( f_1\circ\cdots\circ f_n(x), x^*\right)+d\left(y^*, h_1\circ\cdots\circ h_n(x)\right)+\sum_{k=1}^{n}\epsilon'\mu^{k-1}$$
          where the last quantity approaches $\frac{\epsilon'}{1-\mu}=\frac{\epsilon}{2}$ as $n\rightarrow\infty$.
       \end{itemize}
   \end{proof}

   \section{\emb{Continued Fractions}}

Given an irrational $\alpha=[a_1,a_2,\dots]\in(0,1)$, set $p_n(\alpha)\in \mathbb{Z}$ and $q_n(\alpha)\in\mathbb{N}$ to be the coprime integers such that
$$\frac{p_n(\alpha)}{q_n(\alpha)}=[a_1,a_2,\dots, a_n].$$
We know that these values satisfy the following recurrence equations:

\begin{equation}
    p_{k+1}(\alpha)=a_{k+1}p_{k}(\alpha)+p_{k-1}(\alpha), \ p_1(\alpha)=p_0(\alpha)=1
\end{equation}
\begin{equation}\label{recurrence relations}
    q_{k+1}(\alpha)=a_{k+1}q_{k}(\alpha)+q_{k-1}(\alpha), \ q_1(\alpha)=a_1, \ q_0(\alpha)=1
\end{equation}
and that 
\begin{equation}\label{Diophantine Approximation}
    \left|q_k(\alpha)\alpha-p_k(\alpha) \right|<\frac{1}{q_k(\alpha)}.
\end{equation}

In regards to an irrational rotation by $\alpha=[a_1, a_2,\dots]$ on $S^1$, the Three Distance Theorem (see for example Section 2.1 of \cite{I}) states that for each positive integer $n$, the values $\{m\alpha\}$, $1\leq m \leq n$, divide the unit circle into $n+1$ many intervals of at most three possible lengths, one of which is the sum of the other two. The precise lengths of the intervals come from how $n$ relates to the values $p_k(\alpha)$ and $q_k(\alpha)$. In particular, if we choose $n=(r+1)q_k(\alpha)+q_{k-1}(\alpha)-1$ for some $k\geq 1$ and $1\leq r<a_{k+1}$, then the set of points $\{m\alpha\}$, $1\leq m \leq n$, split $S^1$ into $n+1$ many intervals of only two possible lengths given by 
$$\left|q_{k}(\alpha)\alpha-p_k(\alpha)\right| \ \text{or} \ \left|q_{k-1}(\alpha)\alpha-p_{k-1}(\alpha)\right|-r\left|q_{k}(\alpha)\alpha-p_k(\alpha)\right|,$$
and by (\ref{Diophantine Approximation}), both of these quantities are no greater than $\frac{1}{q_{k-1}(\alpha)}+\frac{1}{q_{k}(\alpha)}$. As a corollary, we obtain the following result which will be used later on.

\begin{propsec}\label{Circle rotation}
Suppose that $\mathcal A\subset \mathbb{N}$ is finite. For each $\epsilon>0$, there is a $n=n(\epsilon, \mathcal A)\in\mathbb{N}$ such that for any $\alpha=[a_1,a_2,\dots]$ with $a_i\in\mathcal A$, we must have 
    $$\min\limits_{1\leq m \leq n}\left| m\alpha-y\right|<\epsilon$$
    for any $y\in S^1$.

\end{propsec}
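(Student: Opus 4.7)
The plan is to combine the Three Distance Theorem recalled just before the statement with uniform control on the continued-fraction denominators $q_k(\alpha)$ afforded by the finiteness of $\mathcal A$. Set $a^*:=\max\mathcal A$. From the recurrence (\ref{recurrence relations}),
$$q_{k+1}(\alpha)\;\geq\;q_k(\alpha)+q_{k-1}(\alpha)\qquad\text{and}\qquad q_{k+1}(\alpha)\;\leq\;(a^*+1)\,q_k(\alpha),$$
so $F_k\leq q_k(\alpha)\leq (a^*+1)^k$ uniformly in any $\alpha=[a_1,a_2,\dots]$ with $a_i\in\mathcal A$, where $F_k$ denotes the $k$-th Fibonacci number. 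This is the only place the boundedness of $\mathcal A$ enters.

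Given $\epsilon>0$, I would fix $K\in\mathbb N$ with $2/F_{K-1}<\epsilon$ and set $n:=(a^*+1)^{K+1}$, which is a uniform upper bound for $q_{K+1}(\alpha)$. The geometric aim is then to show that the maximum gap between consecutive points of $\{m\alpha\,(\mathrm{mod}\,1):1\leq m\leq n\}$ on $S^1$ is at most $\epsilon$; once this is established, every $y\in S^1$ lies within $\epsilon$ of some such $m\alpha$, which is precisely the statement. Since adjoining points to a finite subset of $S^1$ can only shrink gaps, it suffices to exhibit, for each admissible $\alpha$, some $n_\alpha\leq n$ at which the quoted form of the Three Distance Theorem delivers a gap bound below $\epsilon$.

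Concretely, provided $a_{K+1}(\alpha)\geq 2$, I would apply the cited form of the Three Distance Theorem with $k=K$ and $r=a_{K+1}(\alpha)-1$, yielding $n_\alpha=q_{K+1}(\alpha)-1\leq n$ and only two possible gap lengths, each bounded via (\ref{Diophantine Approximation}) by
$$\tfrac{1}{q_{K-1}(\alpha)}+\tfrac{1}{q_K(\alpha)}\;\leq\;\tfrac{2}{q_{K-1}(\alpha)}\;\leq\;\tfrac{2}{F_{K-1}}\;<\;\epsilon.$$
Adjoining the remaining $m\alpha$ up to $m=n$ only subdivides existing arcs, so the maximum gap persists below $\epsilon$.

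The only real subtlety I anticipate is the degenerate case $a_{K+1}(\alpha)=1$, which makes the range $1\leq r<a_{K+1}$ in the quoted form of the theorem empty. In that case I would appeal instead to the general three-length version of the Three Distance Theorem, applied directly at $n_\alpha=q_{K+1}(\alpha)$: this produces at most three distinct arc lengths whose maximum is again bounded by $|q_{K-1}(\alpha)\alpha-p_{K-1}(\alpha)|+|q_K(\alpha)\alpha-p_K(\alpha)|\leq 2/F_{K-1}<\epsilon$. Either branch supplies the required uniform estimate and finishes the proof, with both $K$ and $n$ depending only on $\epsilon$ and $\mathcal A$ as required.
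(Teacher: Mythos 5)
Your proof is correct and follows essentially the same route as the paper: uniform control of the denominators $q_k(\alpha)$ coming from the finiteness of $\mathcal A$, combined with the quoted form of the Three Distance Theorem and the observation that adjoining further points $\{m\alpha\}$ only refines the partition of $S^1$. The differences are minor — you use the universal lower bound $q_k(\alpha)\geq F_k$ where the paper compares with $q_k\left(\left[\overline{a'}\right]\right)$ for $a'=\min\mathcal A$, and you take $r=a_{K+1}-1$ and treat the degenerate case $a_{K+1}=1$ separately, a case the paper's choice $r=1$ (which requires $a_{k+1}\geq 2$ in the quoted statement) passes over silently — so if anything your version is slightly more careful.
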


\begin{proof} Let $\epsilon>0.$ Set $Y:=\{\alpha=[a_1,a_2,\dots] : a_i\in \mathcal A \}\subset [0,1]\setminus \mathbb{Q}$ and let $a'$ be the smallest element of $\mathcal A$. Then using the recurrence relation (\ref{recurrence relations}) and an induction argument, we find that for each $k\geq 1$, there is an $M_k>0$ such that 
$$q_k\left( \left[ \overline{a'}\right]\right)\leq\sup\limits_{\alpha\in Y}|q_k(\alpha)|\leq M_k.$$
Choose $k$ large enough so that 
$$\frac{1}{q_{k-1}\left( \left[ \overline{a'}\right]\right)}+\frac{1}{q_k\left( \left[ \overline{a'}\right]\right)}<\epsilon.$$
Set $n:=2M_k+M_{k-1}-1$ and let $\alpha\in Y$ be arbitrary. By the above discussion, for $n_k:=2q_{k}(\alpha)+q_{k-1}(\alpha)-1\leq n$, we have that the points $\{m\alpha\}$, $1\leq m <n_k$, split $S^1$ into $n_k+1$ many intervals of lengths no greater than 
$\frac{1}{q_{k-1}(\alpha)}+\frac{1}{q_{k}(\alpha)}$ which is less than $\epsilon$. So, it follows that the set $\{m\alpha\}_{m=1}^{n}$ splits $S^1$ into finitely many intervals of length less than $\epsilon$.

\end{proof}

\section*{Acknowledgements}
I would like to thank Anton Gorodetski for his guidance and support, \emb{and Jake Fillman for looking at a preliminary draft of this text}. \emb{I would also like to thank both anonymous referees for their helpful suggestions and input.} This project was supported by NSF grant DMS-2247966 (PI: A. Gorodetski).

\newcommand{\Addresses}{{
  \bigskip
  \footnotesize

  \textsc{Department of Mathematics, University of California, Irvine, CA 92697, USA}\par\nopagebreak
  \textit{E-mail address}: \texttt{lunaar1@uci.edu}

}}



\Addresses
\end{document}